\documentclass[a4paper]{amsart}
\usepackage[T1]{fontenc}
\usepackage[british]{babel}
\usepackage{stmaryrd}
\DeclareMathSymbol{\rest}{\mathbin}{AMSa}{"16}
\newcommand{\U}{\mathop{\textstyle\bigcup}}
\newcommand{\bool}[1]{\mathsf{#1}}
\newcommand{\tow}[1]{\mathcal{#1}}
\newcommand{\qp}[1]{\left[ #1 \right]}
\newcommand{\Qp}[1]{\left\llbracket #1 \right\rrbracket}
\newcommand{\ap}[1]{\left\langle #1 \right\rangle}
\newcommand{\bp}[1]{\left\lbrace #1 \right\rbrace}
\newcommand{\vp}[1]{\left\lvert #1 \right\rvert}
\DeclareMathOperator{\dom}{dom}
\DeclareMathOperator{\crit}{crit}
\DeclareMathOperator{\Ult}{Ult}
\DeclareMathOperator{\trcl}{trcl}
\DeclareMathOperator{\Coll}{\bool{Coll}}
\DeclareMathOperator{\RO}{\bool{RO}}
\DeclareMathOperator{\dens}{\pi}
\DeclareMathOperator{\sat}{sat}
\DeclareMathOperator{\Th}{Th}
\theoremstyle{plain}
\newtheorem{theorem}{Theorem}[section]
\newtheorem{proposition}[theorem]{Proposition}
\newtheorem{lemma}[theorem]{Lemma}
\newtheorem{corollary}[theorem]{Corollary}
\theoremstyle{definition}
\newtheorem{definition}[theorem]{Definition}
\theoremstyle{remark}
\newtheorem{remark}[theorem]{Remark}
\begin{document}
\title{Universality properties of forcing}
\author{Francesco Parente}
\address{Graduate School of System Informatics\\Kobe University\\1-1 Rokkodai-cho Nada-ku\\Kobe 657-8501\\Japan}
\author{Matteo Viale}
\address{Dipartimento di Matematica ``Giuseppe Peano''\\Universit\`a di Torino\\Via Carlo Alberto 10\\10123 Turin\\Italy}
\thanks{The first author is an International Research Fellow of the Japan Society for the Promotion of Science. The second author acknowledges support from INdAM through GNSAGA and from the projects: PRIN 2017 ``Mathematical Logic: models, sets, computability'', prot.\ 2017NWTM8R, and PRIN 2022 ``Models, sets and classifications'', prot.\ 2022TECZJA}
\begin{abstract}
The purpose of this paper is to investigate forcing as a tool to construct universal models. In particular, we look at theories of initial segments of the universe and show that any model of a sufficiently rich fragment of those theories can be embedded into a model constructed by forcing. Our results rely on the model-theoretic properties of good ultrafilters, for which we provide a new existence proof on non-necessarily complete Boolean algebras.
\end{abstract}
\maketitle

\section{Introduction}

There have been several attempts in the literature to account for the central role of forcing in modern set-theoretic practice. For instance, forcing axioms and generic absoluteness results are amenable to a model-theoretic analysis which singles out the peculiar model-theoretic properties of models of set theory where forcing axioms hold (see for example the second author's work on this matter \cite{viale:category_forcing,VIAAUD14,VIAASP,MR4713473,MR4749956,viale2022absolutemodelcompanionshipforcibility}). The present study complements and expands this view by highlighting a universality phenomenon arising from the Boolean-valued approach to forcing.

Section \ref{section:due} collects standard definitions and basic facts about Boolean algebras and Boolean-valued structures. To gauge the effect of forcing on initial segments of the universe, we introduce $\bool{B}$-valued structures of the form $H_{\check{\delta}}^\bool{B}$, which represent $H_\delta$ of the forcing extension by $\bool{B}$. Furthermore, we review normal and fine ideals and describe the Boolean algebra $\bool{B}(\tow{I})$ associated to a tower $\tow{I}$ of normal fine ideals.

The main technical ingredient of our analysis are the so-called good ultrafilters, to which Section \ref{section:tre} is dedicated. After a brief overview of the relevant model-theoretic concepts, we present a new proof, which may be of independent interest, of the existence of good ultrafilters on Boolean algebras which satisfy a disjointability condition. As opposed to standard constructions using independent families, our argument applies to a wide class of Boolean algebras, not necessarily complete.

In Section \ref{section:quattro}, we establish the first universality result for a fragment of the theory of $H_{\kappa^+}$, in a signature consisting of $\Delta_0$-definable relations. Namely, we fix an infinite regular cardinal $\kappa$ and, assuming the existence of an inaccessible cardinal $\delta>\kappa$, we use the collapsing algebra $\Coll(\kappa,{<}\delta)$ to prove that any model of the universal theory of $H_{\kappa^+}$ of cardinality at most $\delta$ can be embedded into a model of the form $H_{\check{\delta}}^{\Coll(\kappa,{<}\delta)}/U$. This approach, although technically simple, is not flexible enough to accommodate a richer signature, nor the case where $\kappa$ is singular.

We overcome these limitations and establish a more general universality result in Section \ref{section:cinque}. Assuming again the existence of an inaccessible $\delta$, for each infinite cardinal $\kappa<\delta$ we make use of the stationary tower $\bool{B}(\tow{I}^\kappa_{<\delta})$ of height $\delta$ and critical point $\kappa^+$ to obtain universal models for the full theory of $H_{\kappa^+}$, in a signature consisting of arbitrary relations. Moreover, under the additional assumption that $\delta$ is a Woodin cardinal, we show that the corresponding $\bool{B}(\tow{I}^\kappa_{<\delta})$-valued structure correctly represents $H_\delta$ in the forcing extension.

\section{Boolean-valued structures}\label{section:due}

We follow the standard notation and terminology of \cite{KOPPELBERG}. A \emph{Boolean algebra} is a complemented distributive lattice $\ap{\bool{B},\vee,\wedge,\neg,\bool{0},\bool{1}}$ such that $\bool{0}^\bool{B}\neq\bool{1}^\bool{B}$. Each Boolean algebra is equipped with a partial order, defined as $a\le b\iff a\vee b=b$. With respect to this ordering, for a subset $X\subseteq\bool{B}$ we denote
\[
\bigvee X=\sup(X)\quad\text{and}\quad\bigwedge X=\inf(X),
\]
whenever they actually exist.

We define
\[
\bool{B}^+=\bool{B}\setminus\bp{\bool{0}}
\]
and, for each $b\in\bool{B}^+$, we define the \emph{relative algebra}
\[
\bool{B}\rest b = \bp{a\in\bool{B} : a\le b},
\]
which is also a Boolean algebra with the natural operations.

\begin{definition} Let $\delta$ be a cardinal; a Boolean algebra $\bool{B}$ is \emph{${<}\delta$-c.c.}\ if every antichain in $\bool{B}$ has cardinality less than $\delta$. The \emph{saturation} of $\bool{B}$, in symbols $\sat(\bool{B})$, is the least cardinal $\delta$ such that $\bool{B}$ is ${<}\delta$-c.c.
\end{definition}

The following weakening of the ${<}\delta$-c.c., essentially due to Baumgartner and Taylor \cite{BAUTAY}, will play a role in Section \ref{section:cinque}.

\begin{definition} Let $\delta$ be a cardinal; a Boolean algebra $\bool{B}$ is \emph{${<}\delta$-presaturated} if for every collection of antichains $\bp{A_\alpha : \alpha<\lambda}$ with $\lambda<\delta$ and every $b\in\bool{B}^+$ there exists $d\in\bool{B}^+$ such that $d\le b$ and for all $\alpha<\lambda$
\[
\vp{\bp{a\in A_\alpha : a\wedge d>\bool{0}}}<\delta.
\]
\end{definition}

\begin{definition} Let $\bool{B}$ be a Boolean algebra; a subset $D\subseteq\bool{B}^+$ is \emph{dense} if for all $b\in\bool{B}^+$ there exists $d\in D$ such that $d\le b$. The \emph{density} of $\bool{B}$, in symbols $\dens(\bool{B})$, is the minimum cardinality of a dense subset of $\bool{B}$.
\end{definition}

\begin{definition} Let $\kappa$ be a cardinal and $\bool{B}$ a Boolean algebra; a subset $C\subseteq\bool{B}^+$ is \emph{${<}\kappa$-closed} if in $C$ all decreasing chains of length less than $\kappa$ have a lower bound.
\end{definition}

We shall focus our attention to special classes of Boolean algebras in which some suprema and infima are guaranteed to exist.

\begin{definition} Let $\delta$ be a cardinal; a Boolean algebra $\bool{B}$ is \emph{${<}\delta$-complete} if for all $X\in\qp{\bool{B}}^{<\delta}$ both $\bigvee X$ and $\bigwedge X$ exist. Furthermore, $\bool{B}$ is \emph{complete} if it is ${<}\delta$-complete for every $\delta$. 
\end{definition}

According to MacNeille \cite{macneille:completion}, for every partial order $P$ there exists a complete Boolean algebra $\bool{B}$ together with an order and incompatibility preserving map $i\colon P\to\bool{B}^+$ such that $i[P]$ is dense in $\bool{B}$. Furthermore, $\bool{B}$ is uniquely determined up to isomorphism and is called $\RO(P)$, the \emph{completion} of $P$.

Next, we introduce the notion of a Boolean-valued structure for a given signature. We shall restrict our attention to \emph{relational} signatures, that is to say, containing only relation symbols and constant symbols.

\begin{definition}[Rasiowa and Sikorski \cite{rs1953}]\label{definition:bvm} Let $L$ be a relational signature and $\bool{B}$ a Boolean algebra. A \emph{$\bool{B}$-valued structure} $\mathfrak{M}$ for $L$ consists of:
\begin{enumerate}
\item a non-empty domain $M$;
\item a function
\[
\begin{split}
M\times M&\longrightarrow\bool{B} \\
\ap{\tau,\sigma} &\longmapsto\Qp{\tau=\sigma}^\mathfrak{M}
\end{split}\ ;
\]
\item for each $n$-ary relation symbol $R\in L$, a function
\[
\begin{split}
^nM&\longrightarrow\bool{B} \\
\ap{\tau_1,\dots,\tau_n} &\longmapsto\Qp{R(\tau_1,\dots,\tau_n)}^\mathfrak{M}
\end{split}\ ;
\]
\item for each constant symbol $c\in L$, an element $c^\mathfrak{M}\in M$.
\end{enumerate}
We require that the following conditions hold:
\begin{itemize}
\item for all $\tau,\sigma,\pi\in M$
\begin{gather*}
\Qp{\tau=\tau}^\mathfrak{M}=\bool{1}, \\
\Qp{\tau=\sigma}^\mathfrak{M}=\Qp{\sigma=\tau}^\mathfrak{M}, \\
\Qp{\tau=\sigma}^\mathfrak{M}\wedge\Qp{\sigma=\pi}^\mathfrak{M}\le\Qp{\tau=\pi}^\mathfrak{M};
\end{gather*}
\item for every $n$-ary relation symbol $R\in L$ and $\tau_1,\dots,\tau_n,\sigma_1,\dots,\sigma_n\in M$,
\[
\Qp{R(\tau_1,\dots,\tau_n)}^\mathfrak{M}\wedge\bigwedge_{i=1}^n \Qp{\tau_i=\sigma_i}^\mathfrak{M}\le\Qp{R(\sigma_1,\dots,\sigma_n)}^\mathfrak{M}.
\]
\end{itemize}
\end{definition}

If $\varphi(x_1,\dots,x_n)$ is an $L$-formula and $\tau_1,\dots,\tau_n\in M$, the Boolean value $\Qp{\varphi(\tau_1,\dots,\tau_n)}^\mathfrak{M}$ is defined recursively as follows: first extend Definition \ref{definition:bvm} from atomic formulae with parameters from $M$ to quantifier-free formulae by letting
\[
\Qp{\lnot\varphi}^\mathfrak{M}=\neg\Qp{\varphi}^\mathfrak{M}\quad\text{and}\quad\Qp{\varphi\land\psi}^\mathfrak{M}=\Qp{\varphi}^\mathfrak{M}\wedge\Qp{\psi}^\mathfrak{M}.
\]
Then, given an $L$-formula $\varphi(x,y_1,\dots,y_n)$ and $\sigma_1,\dots,\sigma_n\in M$, let
\[
\Qp{\exists x\varphi(x,\sigma_1,\dots,\sigma_n)}^\mathfrak{M}=\bigvee_{\tau\in M}\Qp{\varphi(\tau,\sigma_1,\dots,\sigma_n)}^\mathfrak{M},
\]
where the supremum on the right-hand side is computed in $\RO(\bool{B})$. This extension is legitimate because the dense embedding $i\colon\bool{B}\to\RO(\bool{B})$ preserves all the suprema and infima which happen to exist in $\bool{B}$.

\begin{definition} A $\bool{B}$-valued structure $\mathfrak{M}$ for $L$ is \emph{full} if for every $L$-formula $\varphi(x,y_1,\dots,y_n)$ and $\sigma_1,\dots,\sigma_n\in M$ there exist $\tau_1,\dots,\tau_m\in M$ such that
\[
\Qp{\exists x\varphi(x,\sigma_1,\dots,\sigma_n)}^\mathfrak{M}=\bigvee_{i=1}^m\Qp{\varphi(\tau_i,\sigma_1,\dots,\sigma_n)}^\mathfrak{M}.
\]
\end{definition}

Closely related is the mixing property which, together with fullness, will guarantee that the quotient operation described in Section \ref{section:tre} is well behaved.

\begin{definition} A $\bool{B}$-valued structure $\mathfrak{M}$ satisfies the \emph{${<}\delta$-mixing property} if for every antichain $A$ in $\bool{B}$ with $\vp{A}<\delta$ and every $\bp{\tau_a: a\in A}\subseteq M$ there exists $\tau\in M$ such that for all $a\in A$, $a\le\Qp{\tau=\tau_a}^\mathfrak{M}$. Furthermore, $\mathfrak{M}$ satisfies the \emph{mixing property} if it satisfies the ${<}\delta$-mixing property for every cardinal $\delta$.
\end{definition}

\begin{proposition}[{Viale \cite[Proposition 6.3.14]{viale:method}}]\label{proposition:mixingfull} Let $\bool{B}$ be a complete Boolean algebra. If a $\bool{B}$-valued structure $\mathfrak{M}$ satisfies the ${<}\delta$-mixing property for some cardinal $\delta>\vp{M}$, then $\mathfrak{M}$ is full.
\end{proposition}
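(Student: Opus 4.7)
My plan is to prove the stronger fact that the supremum $\Qp{\exists x\,\varphi(x,\sigma_1,\dots,\sigma_n)}^\mathfrak{M}$ is always attained by a single $\tau\in M$, which immediately gives fullness with $m=1$. The argument has two ingredients: a disjointification step turning the family $\bp{\Qp{\varphi(\tau,\sigma_1,\dots,\sigma_n)}^\mathfrak{M}:\tau\in M}$ into an antichain of size ${<}\delta$, followed by a single application of the ${<}\delta$-mixing property to unify the associated witnesses into one element of $M$.

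Concretely, write $\bar\sigma=\sigma_1,\dots,\sigma_n$ and enumerate $\bp{\Qp{\varphi(\tau,\bar\sigma)}^\mathfrak{M}:\tau\in M}$ as $(s_\alpha)_{\alpha<\lambda}$ for some $\lambda\le\vp{M}<\delta$. Using completeness of $\bool{B}$, define
\[
a_\alpha=s_\alpha\wedge\bigwedge_{\beta<\alpha}\lnot s_\beta
\]
and let $A=\bp{a_\alpha:\alpha<\lambda\text{ and }a_\alpha\neq\bool{0}}$. A routine induction shows that $A$ is an antichain in $\bool{B}$ with $\bigvee A=\bigvee_{\alpha<\lambda}s_\alpha=\Qp{\exists x\,\varphi(x,\bar\sigma)}^\mathfrak{M}$, where the last equality uses that suprema which already exist in $\bool{B}$ coincide with those computed in $\RO(\bool{B})$. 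For each $a_\alpha\in A$ pick $\tau_\alpha\in M$ with $a_\alpha\le\Qp{\varphi(\tau_\alpha,\bar\sigma)}^\mathfrak{M}$; since $\vp{A}\le\lambda<\delta$, the ${<}\delta$-mixing property furnishes a single $\tau\in M$ such that $a_\alpha\le\Qp{\tau=\tau_\alpha}^\mathfrak{M}$ for every $a_\alpha\in A$.

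A standard induction on formulae, based on the atomic substitution axiom in Definition \ref{definition:bvm} together with the infinite distributivity of the complete Boolean algebra $\bool{B}$, promotes the atomic substitution clause to the general principle $\Qp{\tau=\tau_\alpha}^\mathfrak{M}\wedge\Qp{\varphi(\tau_\alpha,\bar\sigma)}^\mathfrak{M}\le\Qp{\varphi(\tau,\bar\sigma)}^\mathfrak{M}$. Hence $a_\alpha\le\Qp{\varphi(\tau,\bar\sigma)}^\mathfrak{M}$ for every $\alpha$, and joining over $A$ produces $\Qp{\exists x\,\varphi(x,\bar\sigma)}^\mathfrak{M}=\bigvee A\le\Qp{\varphi(\tau,\bar\sigma)}^\mathfrak{M}$; the converse inequality is immediate from the definition of the existential Boolean value, and the trivial case $\Qp{\exists x\,\varphi(x,\bar\sigma)}^\mathfrak{M}=\bool{0}$ is handled by picking any $\tau\in M$.

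I do not anticipate a substantial obstacle. The one delicate point is the disjointification, which genuinely requires completeness of $\bool{B}$, both to form the infima $\bigwedge_{\beta<\alpha}\lnot s_\beta$ for arbitrarily large $\alpha$ and to ensure that $\bigvee A$, computed in $\bool{B}$ itself, matches the value $\Qp{\exists x\,\varphi(x,\bar\sigma)}^\mathfrak{M}$ which is defined via $\RO(\bool{B})$; after this, the only real content is the single invocation of ${<}\delta$-mixing.
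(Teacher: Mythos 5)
Your proof is correct. The paper does not actually prove this proposition itself---it is quoted from Pierobon and Viale---but your argument (disjointify the family $\bp{\Qp{\varphi(\tau,\bar\sigma)}^\mathfrak{M} : \tau\in M}$ into an antichain of size at most $\vp{M}<\delta$, choose a witness above each piece, and mix them into a single $\tau$ attaining the supremum) is precisely the standard route taken in that reference, and the two points you single out---that the embedding $\bool{B}\to\RO(\bool{B})$ preserves the suprema already existing in $\bool{B}$, and that the substitution inequality $\Qp{\tau=\tau_\alpha}^\mathfrak{M}\wedge\Qp{\varphi(\tau_\alpha,\bar\sigma)}^\mathfrak{M}\le\Qp{\varphi(\tau,\bar\sigma)}^\mathfrak{M}$ extends from atomic to arbitrary formulae via the infinite distributive law---are exactly where completeness is needed and are handled correctly.
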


In this paper, we shall be mostly focused on Boolean-valued models of set theory. As argued by Venturi and Viale \cite[Section 3]{VIAVEN23}, the appropriate signature to axiomatize them is $\in_{\Delta_0}$, which is the expansion of the signature $\bp{\in}$ containing an $n$-ary relation symbol $R_\varphi$ for each $\Delta_0$ $\bp{\in}$-formula $\varphi(x_1,\dots,x_n)$, as well as constant symbols for $\emptyset$ and $\omega$.

For a cardinal $\lambda$, let $H_\lambda=\bp{x : \vp{\trcl(x)}<\lambda}$. Whenever $\lambda$ is uncountable, we consider $H_\lambda$ as an $\in_{\Delta_0}$-structure, where the interpretation of the symbol $R_\varphi$ is the set of tuples in $H_\lambda$ which satisfy $\varphi$. Furthermore, if $\mathcal{A}$ is any set of relations on $H_\lambda$, let us say $\mathcal{A}\subseteq\bigcup_{n<\omega}\mathcal{P}(^n(H_\lambda))$, then we let ${\in_\mathcal{A}}={\in_{\Delta_0}}\cup\mathcal{A}$ be the signature obtained naming each element of $\mathcal{A}$ by a new relation symbol.

From now on, we assume some familiarity with the forcing method. Given a notion of forcing $P$, let $V^P$ be the class of $P$-names. For $\tau\in V^P$, let $\tau_G$ be the valuation of $\tau$ by a $P$-generic filter $G$. According to the classic approach of Scott, Solovay, and Vop\v enka, if $\bool{B}$ is a complete Boolean algebra then $V^\bool{B}$ is a $\bool{B}$-valued structure for $\in_{\Delta_0}$ with the mixing property. Indeed if $\varphi(x_1,\dots,x_n)$ is a $\Delta_0$ $\bp{\in}$-formula, then the interpretation of $R_\varphi$ is naturally given by
\[
\Qp{R_\varphi(\tau_1,\dots,\tau_n)}^{V^\bool{B}}=\Qp{\varphi(\tau_1,\dots,\tau_n)}^{V^\bool{B}}.
\]
Furthermore, the constant symbols for $\emptyset$ and $\omega$ are interpreted as $\emptyset$ and $\check{\omega}$ respectively.

Let $\delta$ be a cardinal; we say that a complete Boolean algebra $\bool{B}$ is \emph{$\delta$-preserving} if $\Qp{\check{\delta}\text{ is a cardinal}}^{V^\bool{B}}=\bool{1}$. In this case, we have a canonical set of names for $H_\delta$ as computed in the forcing extension by $\bool{B}$, given by the following definition.

\begin{definition}[{Asper\'o and Viale \cite[Definition 2.2]{VIAASP}}] Let $\delta$ be a cardinal and $\bool{B}$ be a $\delta$-preserving complete Boolean algebra. We define
\[
H_{\check{\delta}}^\bool{B}=\bp{\tau\in V^\bool{B}\cap V_\alpha : \Qp{\vp{\trcl(\tau)}<\check{\delta}}^{V^\bool{B}}=\bool{1}},
\]
where $\alpha\ge\vp{\bool{B}}$ is a sufficiently large ordinal such that for all $\sigma\in V^\bool{B}$
\[
\Qp{\vp{\trcl(\sigma)}<\check{\delta}}^{V^\bool{B}}\le\bigvee_{\tau\in V^\bool{B}\cap V_\alpha}\Qp{\tau=\sigma}^{V^\bool{B}}.
\]
\end{definition}

\begin{proposition}\label{proposition:rep} If $\bool{B}$ is $\delta$-preserving and $G$ is $\bool{B}$-generic over $V$, then
\[
H_\delta^{V[G]}=\bp{\tau_G : \tau\in H_{\check{\delta}}^\bool{B}}.
\]
\end{proposition}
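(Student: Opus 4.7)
The plan is to prove the two inclusions separately. The direction $\supseteq$ is immediate from the forcing theorem: any $\tau\in H_{\check{\delta}}^\bool{B}$ satisfies $\Qp{\vp{\trcl(\tau)}<\check{\delta}}^{V^\bool{B}}=\bool{1}$ by definition, hence $V[G]\models\vp{\trcl(\tau_G)}<\delta$ and thus $\tau_G\in H_\delta^{V[G]}$.

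For the reverse inclusion, I start with arbitrary $x\in H_\delta^{V[G]}$ and any $\bool{B}$-name $\sigma$ with $\sigma_G=x$, and set $b=\Qp{\vp{\trcl(\sigma)}<\check{\delta}}^{V^\bool{B}}$, which necessarily lies in $G$. The first move is to upgrade $\sigma$ to a name forced outright to have small transitive closure: using the mixing property of $V^\bool{B}$, I replace $\sigma$ by the name $\sigma'$ obtained by mixing $\sigma$ below $b$ with the canonical $\bool{B}$-name for $\emptyset$ above $\neg b$. Then $\sigma'_G=\sigma_G=x$ (since $b\in G$), while $\Qp{\vp{\trcl(\sigma')}<\check{\delta}}^{V^\bool{B}}=\bool{1}$ by construction.

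Applying the defining property of $\alpha$ to $\sigma'$ now yields $\bigvee_{\tau\in V^\bool{B}\cap V_\alpha}\Qp{\tau=\sigma'}^{V^\bool{B}}=\bool{1}$. I select a maximal antichain $A\subseteq\bool{B}$ together with a family $\{\tau_a:a\in A\}\subseteq V^\bool{B}\cap V_\alpha$ such that $a\le\Qp{\tau_a=\sigma'}^{V^\bool{B}}$, and mix the $\tau_a$'s along $A$ into a single name $\tau$ with $\Qp{\tau=\sigma'}^{V^\bool{B}}=\bool{1}$. Such a $\tau$ automatically satisfies $\Qp{\vp{\trcl(\tau)}<\check{\delta}}^{V^\bool{B}}=\bool{1}$ and evaluates to $\sigma'_G=x$ under $G$; the only thing left to verify is that $\tau\in V_\alpha$.

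That last rank bookkeeping is precisely what the clause ``sufficiently large'' in the definition of $\alpha$ is designed to handle: the standard mixing construction involves only the $\tau_a\in V_\alpha$ and the antichain $A\subseteq\bool{B}\subseteq V_\alpha$, so taking $\alpha$ to be a limit ordinal above the ranks in play keeps $\tau$ inside $V_\alpha$ and hence in $H_{\check{\delta}}^\bool{B}$. Beyond this, the argument relies only on the forcing theorem for $V^\bool{B}$, the mixing property, and the defining inequality for $\alpha$.
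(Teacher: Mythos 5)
Your proof is correct and follows essentially the same route as the paper's: both directions rest on the forcing theorem, the mixing property of $V^{\bool{B}}$, and the defining inequality for $\alpha$. The only difference is one of arrangement --- the paper first uses genericity to replace the name by one in $V_\alpha$ and then performs a single two-element mixing with a fixed element of $H_{\check{\delta}}^{\bool{B}}$, whereas you first mix with the name for $\emptyset$ and then mix along a maximal antichain, which makes your rank bookkeeping for $\tau\in V_\alpha$ marginally heavier but equally standard.
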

\begin{proof} If $\tau\in H_{\check{\delta}}^\bool{B}$, then obviously $\tau_G\in H_\delta^{V[G]}$. Conversely, suppose $x\in H_\delta^{V[G]}$ and let $\dot{x}$ be a $\bool{B}$-name for $x$. By the forcing theorem $\Qp{\vp{\trcl(\dot{x})}<\check{\delta}}^{V^\bool{B}}\in G$ and, since $\alpha$ is sufficiently large, we may also assume that $\dot{x}\in V^\bool{B}\cap V_\alpha$. Let $\sigma$ be an arbitrary element of $H_{\check{\delta}}^\bool{B}$; using the mixing property of $V^\bool{B}$, we can easily find $\tau\in V^\bool{B}\cap V_\alpha$ such that
\[
\Qp{\vp{\trcl(\dot{x})}<\check{\delta}}^{V^\bool{B}}\le\Qp{\tau=\dot{x}}^{V^\bool{B}}\quad\text{and}\quad\neg\Qp{\vp{\trcl(\dot{x})}<\check{\delta}}^{V^\bool{B}}\le\Qp{\tau=\sigma}^{V^\bool{B}}.
\]
Then it is clear that $\Qp{\vp{\trcl(\tau)}<\check{\delta}}^{V^\bool{B}}=\bool{1}$ and $\tau_G=\dot{x}_G=x$, as desired.
\end{proof}

The next lemma shows that, in some cases, using $\bool{B}$-names in $H_\delta$ yields the same outcome as Proposition \ref{proposition:rep}.

\begin{lemma}[{Audrito and Viale \cite[Lemma 5.1]{VIAAUD14}}]\label{lemma:av} Let $\delta$ be a regular cardinal and $\bool{B}\subseteq H_\delta$ be a ${<}\delta$-presaturated Boolean algebra. If $G$ is $\bool{B}$-generic over $V$, then $\delta$ is a regular cardinal in $V[G]$ and
\[
H_\delta^{V[G]}=\bp{\tau_G : \tau\in V^\bool{B}\cap H_\delta}.
\]
\end{lemma}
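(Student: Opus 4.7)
The inclusion $\bp{\tau_G : \tau \in V^\bool{B} \cap H_\delta} \subseteq H_\delta^{V[G]}$ is immediate: for $\tau \in V^\bool{B} \cap H_\delta$ the transitive closure of $\tau_G$ in $V[G]$ is the image under valuation of the names occurring in $\trcl(\tau)$, hence has $V[G]$-cardinality bounded by $\vp{\trcl(\tau)}^V < \delta$. For the reverse inclusion I first verify that $\bool{B}$ preserves $\delta$ as a regular cardinal: given a putative $V[G]$-function $f \colon \mu \to \delta$ with $\mu < \delta$, apply $<\delta$-presaturation to the antichains $A_\alpha$ below $p \in G$ deciding the values $\dot f(\check\alpha)$; the trimmed antichains have size $<\delta$ below some $d \in G$, and regularity of $\delta$ in $V$ bounds the range of $f$ strictly below $\delta$.

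The crux is a covering property: for every $S \in V[G]$ with $S \subseteq H_\delta^V$ and $\vp{S}^{V[G]} < \delta$ there exists $T \in V$ with $S \subseteq T \subseteq H_\delta^V$ and $\vp{T}^V < \delta$. To prove it, fix in $V[G]$ an enumeration $h \colon \mu \to S$ with $\mu < \delta$, and in $V$ a name $\dot h$ together with $p \in G$ forcing $\dot h(\check\alpha)$ to lie in the canonical name for $H_\delta^V$ for each $\alpha < \check\mu$. For each $\alpha < \mu$, complete distributivity of $\bool{B}$ makes the set of conditions below $p$ that decide $\dot h(\check\alpha) = \check v$ for some $v \in H_\delta^V$ dense below $p$, so fix a maximal antichain $A_\alpha$ of such conditions with witnesses $v_{\alpha, a}$. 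Applying $<\delta$-presaturation to $\bp{A_\alpha : \alpha < \mu}$ produces $d \in G$ with $d \leq p$ and
\[
\vp{\bp{a \in A_\alpha : a \wedge d > \bool{0}}} < \delta
\]
for every $\alpha$. Then $T = \bp{v_{\alpha, a} : \alpha < \mu, \ a \in A_\alpha, \ a \wedge d > \bool{0}}$ is a $V$-set of cardinality $<\delta$ by regularity of $\delta$, and covers every $h(\alpha)$ because the trimmed antichain remains maximal below $d$.

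Given covering, I prove $H_\delta^{V[G]} \subseteq \bp{\tau_G : \tau \in V^\bool{B} \cap H_\delta}$ by induction on the rank of $x$ in $V[G]$. By the inductive hypothesis, for each $y \in x$ there exists, in $V$, a name $\sigma_y \in V^\bool{B} \cap H_\delta$ with $(\sigma_y)_G = y$; select such $\sigma_y$ in $V[G]$ and let $S = \bp{\sigma_y : y \in x}$, noting $S \subseteq V^\bool{B} \cap H_\delta \subseteq H_\delta^V$ and $\vp{S}^{V[G]} < \delta$. Covering yields $T \in V$ with $S \subseteq T \subseteq V^\bool{B} \cap H_\delta$ and $\vp{T}^V < \delta$. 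Fixing any $\bool{B}$-name $\dot x$ for $x$, put
\[
\tau = \bp{\ap{\sigma, \Qp{\sigma \in \dot x}^{V^\bool{B}}} : \sigma \in T}.
\]
Each $\sigma \in T$ and each element of $\bool{B}$ lies in $H_\delta$, and $\vp{T}^V < \delta$, so regularity of $\delta$ gives $\tau \in H_\delta$; moreover $\tau_G = \bp{\sigma_G : \sigma \in T, \ \sigma_G \in x} = x$ since every $y \in x$ appears as $(\sigma_y)_G$ with $\sigma_y \in S \subseteq T$. The main obstacle is the covering step itself: the naive choice $\tau = \bp{\ap{\sigma, \Qp{\sigma \in \dot x}} : \sigma \in V^\bool{B} \cap H_\delta}$ is a correct name for $x$ but has cardinality at least $\vp{H_\delta^V} \geq \delta$, and it is precisely $<\delta$-presaturation that lets one shrink the index set into a $V$-set of cardinality $<\delta$.
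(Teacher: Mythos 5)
Your overall strategy---extract a covering property from ${<}\delta$-presaturation and then build a small name by induction on rank---is sound, and the covering argument itself is essentially correct (modulo the stray appeal to ``complete distributivity'', which is neither available nor needed: the density below $p$ of conditions deciding $\dot h(\check\alpha)$ follows from the fact that $p$ forces $\dot h(\check\alpha)$ into a ground-model set, computing Boolean values in $\RO(\bool{B})$ and using that $\bool{B}$ is dense in its completion; also, strictly speaking the covering lemma hands you $T\subseteq H_\delta^V$, so you should intersect with the $V$-set $V^{\bool{B}}\cap H_\delta$ before proceeding). The genuine gap is in the final construction of $\tau=\bp{\ap{\sigma,\Qp{\sigma\in\dot x}^{V^{\bool{B}}}} : \sigma\in T}$, which you place in $H_\delta$ on the grounds that ``each element of $\bool{B}$ lies in $H_\delta$''. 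But $\Qp{\sigma\in\dot x}^{V^{\bool{B}}}$ is not an element of $\bool{B}$: the lemma does not assume $\bool{B}$ complete (its intended application in Section \ref{section:cinque} is the stationary tower, which is only ${<}\delta$-complete), so this Boolean value is a supremum computed in $\RO(\bool{B})$ and in general has no representative in $\bool{B}$, let alone one lying in $H_\delta$. As written, $\tau$ is therefore neither a $\bool{B}$-name nor visibly a member of $H_\delta$, and this is precisely the point where an unrestricted construction would break down.

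The repair is one more application of the trick you already used for covering. For each $\sigma\in T$ fix a maximal antichain $A_\sigma$ of conditions of $\bool{B}$ below $p$ deciding the statement $\sigma\in\dot x$; these antichains live in $V$ but may have size $\ge\delta$, so apply ${<}\delta$-presaturation to $\bp{A_\sigma : \sigma\in T}$ (and a density argument) to find $d\in G$ with $d\le p$ and $\vp{\bp{a\in A_\sigma : a\wedge d>\bool{0}}}<\delta$ for every $\sigma\in T$. Now set
\[
\tau=\bp{\ap{\sigma,a} : \sigma\in T,\ a\in A_\sigma,\ a\wedge d>\bool{0},\ a\Vdash\sigma\in\dot x}.
\]
This $\tau$ is a genuine $\bool{B}$-name in $V$; it lies in $H_\delta$ because $\delta$ is regular, $\vp{T}<\delta$, each trimmed antichain has size ${<}\delta$, and all entries come from $H_\delta$; and $\tau_G=x$ because $d\in G$, $G$ meets each $A_\sigma$, and every $y\in x$ is $\sigma_G$ for some $\sigma\in S\subseteq T$. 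With this modification your argument goes through.
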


For the next proposition, we equip $H_{\check{\delta}}^\bool{B}$ with the Boolean values inherited from $V^\bool{B}$. A proof can be found in Venturi and Viale \cite{VIAVEN23}, in particular Lemma 4.10 and the discussion preceding it.

\begin{proposition}\label{proposition:mixing} Let $\delta$ be an uncountable cardinal and $\bool{B}$ be a $\delta$-preserving complete Boolean algebra. Then $H_{\check{\delta}}^\bool{B}$ is a $\bool{B}$-valued structure for $\in_{\Delta_0}$ with the mixing property.
\end{proposition}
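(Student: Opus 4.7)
The plan is to inherit the $\bool{B}$-valued structure directly from $V^\bool{B}$. Since $\delta$ is uncountable, both $\emptyset$ and $\check{\omega}$ lie in $V^\bool{B}\cap V_\alpha$ and satisfy $\Qp{\vp{\trcl(\emptyset)}<\check{\delta}}^{V^\bool{B}}=\Qp{\vp{\trcl(\check{\omega})}<\check{\delta}}^{V^\bool{B}}=\bool{1}$, so $H_{\check{\delta}}^\bool{B}$ is non-empty and the constant symbols $\emptyset$ and $\omega$ receive their canonical interpretations. The equality axioms and the congruence clause for each relation symbol $R_\varphi$ are universal statements already holding in $V^\bool{B}$, hence automatically valid on the subclass $H_{\check{\delta}}^\bool{B}$.

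For the mixing property, fix an antichain $A\subseteq\bool{B}$ and a family $\bp{\tau_a : a\in A}\subseteq H_{\check{\delta}}^\bool{B}$. I would first extend $A$ to a maximal antichain $A'$ by adjoining, if $\bigvee A<\bool{1}$, the element $a_0=\neg\bigvee A$ with $\tau_{a_0}=\emptyset$, so that $\bigvee A'=\bool{1}$. Completeness of $\bool{B}$ guarantees the full mixing property for $V^\bool{B}$, which supplies $\tau\in V^\bool{B}$ satisfying $a\le\Qp{\tau=\tau_a}^{V^\bool{B}}$ for every $a\in A'$ (and thus in particular for every $a\in A$); the standard construction of such $\tau$ produces a name whose rank is bounded by $\sup_{a\in A'}\mathrm{rank}(\tau_a)$. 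Taking $\alpha$ to be a limit ordinal (a harmless adjustment in the definition of $H_{\check{\delta}}^\bool{B}$) then forces $\tau\in V^\bool{B}\cap V_\alpha$.

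It only remains to show $\tau\in H_{\check{\delta}}^\bool{B}$. Since each $\tau_a\in H_{\check{\delta}}^\bool{B}$ satisfies $\Qp{\vp{\trcl(\tau_a)}<\check{\delta}}^{V^\bool{B}}=\bool{1}$ and $A'$ is maximal,
\[
\Qp{\vp{\trcl(\tau)}<\check{\delta}}^{V^\bool{B}}\ge\bigvee_{a\in A'}\bigl(\Qp{\tau=\tau_a}^{V^\bool{B}}\wedge\Qp{\vp{\trcl(\tau_a)}<\check{\delta}}^{V^\bool{B}}\bigr)\ge\bigvee A'=\bool{1}.
\]
No step is conceptually hard; the delicate bookkeeping points are to choose $\alpha$ large enough (a limit ordinal will do) so that the mixing stays inside $V_\alpha$, and to observe that extending $A$ to a maximal antichain, assigning the leftover piece to a fixed element of $H_{\check{\delta}}^\bool{B}$, is precisely what boosts $\Qp{\vp{\trcl(\tau)}<\check{\delta}}^{V^\bool{B}}$ all the way up to $\bool{1}$.
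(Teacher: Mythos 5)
Your argument follows the same route as the proof the paper defers to (Venturi--Viale, Lemma~4.10 and the discussion preceding it): inherit all Boolean values from $V^\bool{B}$, so that the equality and congruence axioms are automatic; complete the antichain by assigning a fixed element of $H_{\check{\delta}}^\bool{B}$ to the leftover piece $\neg\bigvee A$; mix in $V^\bool{B}$; and use maximality of $A'$ together with substitutivity of equality to push $\Qp{\vp{\trcl(\tau)}<\check{\delta}}^{V^\bool{B}}$ up to $\bool{1}$. That computation is correct and is exactly the right way to verify that the mixed name belongs to $H_{\check{\delta}}^\bool{B}$ rather than merely to $V^\bool{B}$.

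The one step that does not work as written is the claim that taking $\alpha$ to be a limit ordinal forces the mixed name $\tau$ into $V_\alpha$. The standard mixed name has rank approximately $\sup_{a\in A'}\mathrm{rank}(\tau_a)$, and each $\tau_a$ is only guaranteed to satisfy $\mathrm{rank}(\tau_a)<\alpha$; if $\vp{A'}\ge\mathrm{cf}(\alpha)$, the ranks of the $\tau_a$ may be cofinal in $\alpha$, in which case $\mathrm{rank}(\tau)\ge\alpha$ and $\tau\notin V_\alpha$, limit ordinal or not. This configuration is genuinely possible, since antichains in $\bool{B}$ can have size up to $\vp{\bool{B}}$ while the definition only requires $\alpha\ge\vp{\bool{B}}$, placing no restriction on $\mathrm{cf}(\alpha)$. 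The repair is easy and in the spirit of your parenthetical remark: the ordinals $\alpha$ admissible in the definition of $H_{\check{\delta}}^\bool{B}$ form an upward-closed class, so one may additionally demand $\mathrm{cf}(\alpha)>\vp{\bool{B}}$ (equivalently, $\mathrm{cf}(\alpha)\ge\sat(\bool{B})$ suffices). Then $\sup_{a\in A'}\mathrm{rank}(\tau_a)<\alpha$, the mixed name lands in $V^\bool{B}\cap V_\alpha$, and the rest of your argument goes through verbatim.
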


\begin{remark}\label{remark:abs} Let $\delta$ be an uncountable cardinal and $\bool{B}$ be a $\delta$-preserving complete Boolean algebra. For every quantifier-free $\in_{\Delta_0}$-formula $\varphi(x,y_1,\dots,y_n)$ and parameters $a_1,\dots,a_n\in H_{\delta}$ we have
\[
\ap{H_\delta,\in_{\Delta_0}}\models\exists x\varphi(x,a_1,\dots,a_n)\implies\Qp{\exists x\varphi(x,\check{a}_1,\dots,\check{a}_n)}^{H_{\check{\delta}}^\bool{B}}=\bool{1}.
\]
\end{remark}

The above observation follows easily from the Boolean-valued version of absoluteness for $\Delta_0$ formulae. However, preservation of universal formulae is more delicate: here is a lemma which, according to Kunen \cite[Lemma 4]{kunen:saturated}, is ``part of the folklore of the subject''. For further details in the context of forcing axioms, we refer the reader to \cite[Lemma 1.3]{VIAMMREV}.

\begin{lemma}\label{lemma:coll} Let $\kappa$ be an infinite regular cardinal and $\bool{B}$ be a complete Boolean algebra having a ${<}\kappa$-closed dense subset. For every quantifier-free $\in_{\Delta_0}$-formula $\varphi(x,y_1,\dots,y_n)$ and parameters $a_1,\dots,a_n\in H_{\kappa^+}$ we have
\[
\ap{H_{\kappa^+},\in_{\Delta_0}}\models\forall x\varphi(x,a_1,\dots,a_n)\implies\Qp{\forall x\varphi(x,\check{a}_1,\dots,\check{a}_n)}^{V^\bool{B}}=\bool{1}.
\]
\end{lemma}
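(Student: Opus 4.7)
\emph{Proof plan.} I argue by contradiction. Suppose $\Qp{\forall x\varphi(x,\check{a}_1,\dots,\check{a}_n)}^{V^\bool{B}} \ne \bool{1}$; then there exist $\dot{x} \in V^\bool{B}$ and $b \in \bool{B}^+$ with $b \le \Qp{\lnot\varphi(\dot{x},\check{\bar{a}})}^{V^\bool{B}}$, and by density I may take $b$ inside the ${<}\kappa$-closed dense subset $D$. Since $\varphi$ is a Boolean combination of finitely many atomic $\in_{\Delta_0}$-formulas $R_{\psi_1}(x,\bar{a}),\dots,R_{\psi_k}(x,\bar{a})$ (with each $\psi_i$ a $\Delta_0$ $\{\in\}$-formula), a \emph{finite} refinement inside $D$ produces $b^* \le b$ in $D$ deciding every Boolean value $\Qp{\psi_i(\dot{x},\check{\bar{a}})}^{V^\bool{B}}$ with some sign $\epsilon_i \in \bp{0,1}$. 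Setting $\Psi := \bigwedge_i \psi_i^{\epsilon_i}$ (a $\Delta_0$ $\{\in\}$-formula that tautologically implies $\lnot\varphi$), we have $b^* \le \Qp{\Psi(\dot{x},\check{\bar{a}})}^{V^\bool{B}}$.

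It now suffices to produce $y \in H_{\kappa^+}^V$ with $V \models \Psi(y,\bar{a})$: $\Delta_0$-absoluteness then gives $\ap{H_{\kappa^+},\in_{\Delta_0}} \models \lnot\varphi(y,\bar{a})$, contradicting the hypothesis. To build $y$, take an elementary submodel $M \prec H_\theta^V$ of cardinality $\kappa$ (for $\theta$ large and regular) containing $\bar{a}$, $\trcl(\bar{a})$, $\bool{B}$, $D$, $b^*$, $\dot{x}$, and form the Mostowski collapse $\pi : M \to \bar{M} \in V$. Because $\trcl(\bar{a}) \cup \bp{\bar{a}}$ is transitive and contained in $M$, $\pi$ fixes $\bar{a}$; by elementarity $\bar{\bool{B}} := \pi(\bool{B})$ carries the ${<}\kappa$-closed dense subset $\bar{D} := \pi(D)$, and $\pi(b^*) \le \Qp{\Psi(\pi(\dot{x}),\check{\bar{a}})}^{\bar{M}^{\bar{\bool{B}}}}$ as computed inside $\bar{M}$.

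I then construct, in $V$, an $\bar{M}$-generic filter $\bar{G} \subseteq \bar{\bool{B}}$ with $\pi(b^*) \in \bar{G}$: enumerate the ${\le}\kappa$ dense subsets of $\bar{\bool{B}}$ belonging to $\bar{M}$ as $\ap{E_\alpha : \alpha < \kappa}$ and recursively pick a descending chain $\pi(b^*) = \bar{b}_0 \ge \bar{b}_1 \ge \dots$ in $\bar{D}$ with $\bar{b}_{\alpha+1} \in E_\alpha$, using the ${<}\kappa$-closure of $\bar{D}$ at limit stages $\alpha < \kappa$ (each such chain having length strictly less than $\kappa$). Setting $y := \pi(\dot{x})_{\bar{G}} \in V$, $\Delta_0$-absoluteness between the transitive $\bar{M}[\bar{G}]$ and $V$ gives $V \models \Psi(y,\bar{a})$; and since $y$ lives in a transitive set of $V$-cardinality $\kappa$, it belongs to $H_{\kappa^+}^V$.

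The main obstacle is delivering a genuinely $\bar{M}$-generic $\bar{G}$ inside $V$: we must meet up to $\kappa$ dense sets while ${<}\kappa$-closure only supplies lower bounds for chains of length strictly below $\kappa$. The key point is that lower bounds are only ever required at limit stages $\alpha < \kappa$---at the final length $\kappa$ we simply take the filter generated by the full chain, with no further lower bound needed. To make sure $\bar{D}$'s closure is still usable from $V$'s external standpoint (and not only $\bar{M}$'s internal one), one chooses the elementary hull $M$ to be suitably closed (for instance with $M^{<\kappa} \subseteq M$), so that every ${<}\kappa$-chain in $\bar{D}$ arising during the construction lies in $\bar{M}$ and its lower bound, guaranteed by ${<}\kappa$-closure of $D$ in $V$, transfers through $\pi$ to $\bar{D}$.
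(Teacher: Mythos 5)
First, note that the paper itself gives no proof of this lemma: it is quoted as folklore, with the reader referred to Kunen and to \cite[Lemma 1.3]{VIAMMREV}. Measured against the standard argument, your overall architecture is reasonable: reducing the quantifier-free matrix to a single $\Delta_0$ formula $\Psi$ decided by one condition $b^*$ needs only finitely many refinements and is fine, and producing $y\in H_{\kappa^+}^V$ with $V\models\Psi(y,\bar{a})$ does yield the contradiction via $\Delta_0$-absoluteness. For $\kappa=\omega$ your construction of $\bar{G}$ also goes through, since only finitely many dense sets have been met at each stage.

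For uncountable $\kappa$, however, there is a genuine gap exactly where you flag the ``main obstacle''. At a limit stage $\alpha<\kappa$ you need a lower bound of the chain $\ap{\bar{b}_\beta : \beta<\alpha}$ lying in $\bar{D}$, i.e.\ a lower bound of the corresponding chain in $D\cap M$ that again belongs to $M$. The hypothesis gives ${<}\kappa$-closure of $D$ in $V$, so a lower bound exists in $D$, but the chain was built externally using an enumeration of the dense sets that need not lie in $M$, so nothing forces any lower bound into $M$; and $\bar{M}$'s internal closure only applies to chains that are elements of $\bar{M}$. Your proposed remedy --- choosing $M$ with $\vp{M}=\kappa$ and $M^{<\kappa}\subseteq M$ --- is not available in ZFC: such an $M$ (containing $2$ as a subset) would satisfy $2^{<\kappa}\le\vp{M}=\kappa$, so it fails already for $\kappa=\omega_1$ when CH fails, whereas the lemma carries no cardinal-arithmetic hypothesis. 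Two standard repairs: (i) replace the single hull by an internally approachable continuous chain $\ap{M_\alpha : \alpha<\kappa}$ of submodels of size ${<}\kappa$ with $\ap{M_\beta : \beta\le\alpha}\in M_{\alpha+1}$, run the construction canonically (fixing a well-order of $H_\theta$ in $M_0$) so that each initial segment of the chain of conditions is an element of $M_{\alpha+1}$, and use elementarity of $M_{\alpha+1}$ to find the lower bound inside $M=\bigcup_{\alpha<\kappa}M_\alpha$; or (ii) dispense with the submodel as in \cite[Lemma 1.3]{VIAMMREV}: use Levy absoluteness in the extension to arrange $b^*\le\Qp{\vp{\trcl(\dot{x})}\le\check{\kappa}}^{V^{\bool{B}}}$, then build a descending $\kappa$-chain in $D$ below $b^*$ deciding the full $\in$-diagram of an enumeration of $\trcl(\bp{\dot{x}})$ in order type $\kappa$, and transitively collapse the resulting well-founded extensional relation in $V$ to obtain $y$. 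In either repair, the regularity of $\kappa$ is what guarantees that lower bounds are only ever required for chains of length strictly below $\kappa$, as you correctly observe at the end.
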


In the final part of this section, we consider Boolean-valued structures of the form $\Ult(H_{\kappa^+},\tow{I})$ arising from towers of ideals $\tow{I}$. Given a set $X$, recall that an ideal $I$ over $\mathcal{P}(X)$ is
\begin{itemize}
\item \emph{normal} if for all $\bp{a_x : x\in X}\subseteq I$ we have $\bp{Z\subseteq X : \exists x\in Z(Z\in a_x)}\in I$;
\item \emph{fine} if for all $x\in X$ we have $\bp{Z\subseteq X : x\notin Z}\in I$.
\end{itemize}

\begin{definition} Let $\delta$ be an inaccessible cardinal. We say that $\tow{I}$ is a \emph{tower} of height $\delta$ if $\tow{I}=\ap{I_X : X\in V_\delta}$ where
\begin{enumerate}
\item\label{tow1} for all $X\in V_\delta$, $I_X$ is a normal fine ideal over $\mathcal{P}(X)$;
\item\label{tow2} for all $X\subseteq Y\in V_\delta$, $I_X=\bp{a\subseteq\mathcal{P}(X) : \bp{Z\subseteq Y : Z\cap X\in a}\in I_Y}$. 
\end{enumerate}
\end{definition}

Following the framework of Foreman \cite[Section 4.8]{FOREMAN}, to each tower $\tow{I}=\ap{I_X : X\in V_\delta}$ we associate a Boolean algebra $\bool{B}(\tow{I})$ as follows: by condition \eqref{tow2}, whenever $X\subseteq Y\in V_\delta$ we have an embedding of Boolean algebras
\[
\begin{split}
\iota_{X,Y}\colon\mathcal{P}(\mathcal{P}(X))/I_X&\longrightarrow\mathcal{P}(\mathcal{P}(Y))/I_Y \\
\qp{a}_{I_X} &\longmapsto \qp{\bp{Z\subseteq Y : Z\cap X\in a}}_{I_Y}
\end{split}.
\]
We let $\bool{B}(\tow{I})$ be the direct limit of the directed system of embeddings $\bp{\iota_{X,Y} : X\subseteq Y\in V_\delta}$.
More explicitly, $\bool{B}(\tow{I})$ can be described as the union algebra $\bigcup\bp{\mathcal{P}(\mathcal{P}(X)) : X\in V_\delta}$ quotiented by the equivalence relation which identifies $a\subseteq\mathcal{P}(X)$ and $a'\subseteq\mathcal{P}(X')$ if and only if $\iota_{X,X\cup X'}\bigl(\qp{a}_{I_X}\bigr)=\iota_{X',X\cup X'}\bigl(\qp{a'}_{I_{X'}}\bigr)$. Modulo this identification, we simply denote elements of $\bool{B}(\tow{I})$ as equivalence classes $\qp{a}_\tow{I}$.

The Boolean-algebraic properties of $\bool{B}(\tow{I})$ are summarized in the following two lemmas.

\begin{lemma}[{Foreman \cite[Lemma 4.49]{FOREMAN}}]\label{lemma:tow} Let $\delta$ be an inaccessible cardinal. If $\tow{I}$ is a tower of height $\delta$, then the Boolean algebra $\bool{B}(\tow{I})$ has cardinality $\le\delta$ and thus may be identified with a subset of $H_\delta$. Furthermore, $\bool{B}(\tow{I})$ is ${<}\delta$-complete, where by normality the supremum of $\bp{\qp{a_\alpha}_\tow{I} : \alpha<\lambda}$ for $\lambda<\delta$ is computed as follows: if $X\in V_\delta$ is sufficiently large such that $\lambda\subseteq X$ and $\U a_\alpha\subseteq X$ for each $\alpha<\lambda$, then
\[
\bigvee\bp{\qp{a_\alpha}_\tow{I} : \alpha<\lambda}=\qp{\bp{Z\subseteq X : \exists\alpha\in Z\cap\lambda (Z\cap\U a_\alpha\in a_\alpha)}}_\tow{I}.
\]
\end{lemma}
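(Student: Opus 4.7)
The strategy naturally splits into the two assertions. For the cardinality bound, I would use that $\delta$ is inaccessible: $\vp{V_\delta}=\delta$, and by the strong limit property $\vp{\mathcal{P}(\mathcal{P}(X))}<\delta$ for every $X\in V_\delta$, so the union $\U\bp{\mathcal{P}(\mathcal{P}(X)) : X\in V_\delta}$ has cardinality $\delta$ and passing to the quotient can only decrease this. Since every equivalence class admits a representative in $V_\delta=H_\delta$, this yields the identification of $\bool{B}(\tow{I})$ with a subset of $H_\delta$.

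For the $<\delta$-completeness, fix $\lambda<\delta$ and a family $\bp{\qp{a_\alpha}_\tow{I} : \alpha<\lambda}$. Regularity of $\delta$ together with the closure of $V_\delta$ under $<\delta$-unions provides an $X\in V_\delta$ with $\lambda\subseteq X$ and $\U a_\alpha\subseteq X$ for every $\alpha<\lambda$, and I let $b$ denote the set displayed in the statement. I would then replace each $a_\alpha$ by its canonical lift $a_\alpha^X=\bp{Z\subseteq X : Z\cap\U a_\alpha\in a_\alpha}$ to $\mathcal{P}(X)$, which represents the same element of $\bool{B}(\tow{I})$ by the direct-limit description.

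To verify $\qp{a_\beta}_\tow{I}\le\qp{b}_\tow{I}$ for each $\beta<\lambda$, I would observe that any $Z\in a_\beta^X$ with $\beta\in Z$ satisfies the defining condition of $b$ with witness $\alpha=\beta$, hence $a_\beta^X\setminus b\subseteq\bp{Z\subseteq X : \beta\notin Z}\in I_X$ by fineness. For the least-upper-bound property, given any upper bound $\qp{c}_\tow{I}$, after enlarging $X$ and lifting $c$ I may assume $c\subseteq\mathcal{P}(X)$ and $d_\alpha:=a_\alpha^X\setminus c\in I_X$ for every $\alpha<\lambda$. Extending this to a family $\bp{d_x : x\in X}$ by setting $d_x=\emptyset$ for $x\in X\setminus\lambda$ and applying normality of $I_X$ gives
\[
b\setminus c=\bp{Z\subseteq X : \exists x\in Z(Z\in d_x)}\in I_X,
\]
hence $\qp{b}_\tow{I}\le\qp{c}_\tow{I}$. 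The main obstacle is not conceptual but rather the careful management of lifts, ensuring that all representatives are chosen coherently over the common base $X$ so that the displayed formula really computes the supremum in $\bool{B}(\tow{I})$; once this bookkeeping is settled, both the upper-bound and the least-upper-bound properties reduce to the one-line applications of fineness and normality above.
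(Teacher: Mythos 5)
Your proof is correct. Note that the paper does not actually prove this lemma --- it is quoted from Foreman \cite[Lemma 4.49]{FOREMAN} --- and your argument is essentially the standard one: the cardinality bound from inaccessibility, the upper-bound property from fineness (via $a_\beta^X\setminus b\subseteq\bp{Z\subseteq X : \beta\notin Z}$), and minimality from normality applied to the family $\bp{d_x : x\in X}$ padded with empty sets. The two points you leave implicit are both harmless: the infima required by ${<}\delta$-completeness come from the suprema by De Morgan, and the displayed set $b$ is genuinely independent of the choice of sufficiently large $X$ (its lift to a larger $X'$ is literally the corresponding set defined over $X'$, since $\lambda\cup\U a_\alpha\subseteq X$), which is what licenses your enlargement of $X$ when comparing with an arbitrary upper bound $\qp{c}_\tow{I}$.
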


\begin{lemma}[{Burke \cite[Lemma 2.4]{BUR97}}]\label{lemma:bur} If $\tow{I}$ is a tower of inaccessible height $\delta$, then every antichain in $\bool{B}(\tow{I})$ can be represented by a disjoint sequence, in the sense that for each antichain $\bp{\qp{a_\alpha}_\tow{I} : \alpha<\lambda}$ there exists a sequence $\ap{b_\alpha : \alpha<\lambda}$ such that for all $\alpha<\lambda$ $\qp{b_\alpha}_\tow{I}=\qp{a_\alpha}_\tow{I}$ and whenever $\beta<\alpha$
\[
\bp{Z\subseteq\U b_\alpha\cup\U b_\beta : Z\cap\U b_\alpha\in b_\alpha\text{ and }Z\cap\U b_\beta\in b_\beta}=\emptyset.
\]
\end{lemma}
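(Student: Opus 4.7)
The strategy is to fix a single ambient set $Y\in V_\delta$ containing $\lambda$ and every $\U a_\alpha$ (possible by the inaccessibility of $\delta$, since we may assume $\lambda<\delta$), lift every $a_\alpha$ into $\mathcal{P}(Y)$, and then use the fineness and normality of $I_Y$ to shrink it to a disjoint representative $b_\alpha\subseteq\mathcal{P}(Y)$ with $\U b_\alpha=Y$. The two features of normal fine ideals driving the argument are fineness, which lets us thin $a_\alpha$ so that every element contains a prescribed point of $Y$ without altering its equivalence class, and normality, which bounds diagonal unions of $I_Y$-small sets.

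Concretely, I would first set $\tilde{a}_\alpha=\bp{Z\subseteq Y : Z\cap\U a_\alpha\in a_\alpha\text{ and }\alpha\in Z}$. Since $I_Y$ is fine and $\alpha\in Y$, we have $\qp{\tilde{a}_\alpha}_\tow{I}=\qp{a_\alpha}_\tow{I}$, and the antichain assumption yields $\tilde{a}_\alpha\cap\tilde{a}_\beta\in I_Y$ whenever $\alpha\ne\beta$. Then define
\[
b_\alpha=\bp{Z\in\tilde{a}_\alpha : Z\notin\tilde{a}_\beta\text{ for every }\beta\in Z\cap\alpha}.
\]
To compute $\qp{b_\alpha}_\tow{I}$, observe that
\[
\tilde{a}_\alpha\setminus b_\alpha=\bp{Z\subseteq Y : \exists\beta\in Z\cap\alpha,\ Z\in\tilde{a}_\alpha\cap\tilde{a}_\beta};
\]
adopting the convention that $\tilde{a}_\alpha\cap\tilde{a}_\beta=\emptyset$ for $\beta\in Y\setminus\alpha$, this is the diagonal union of the family $\bp{\tilde{a}_\alpha\cap\tilde{a}_\beta : \beta\in Y}\subseteq I_Y$, so by normality of $I_Y$ it belongs to $I_Y$, yielding $\qp{b_\alpha}_\tow{I}=\qp{a_\alpha}_\tow{I}$.

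Pairwise disjointness in $\mathcal{P}(Y)$ is then immediate: if $\beta<\alpha$ and $Z\in b_\alpha\cap b_\beta$, the containment $b_\beta\subseteq\tilde{a}_\beta$ implies $\beta\in Z$, hence $\beta\in Z\cap\alpha$, and the defining clause of $b_\alpha$ contradicts $Z\in\tilde{a}_\beta$. The main obstacle is that the lemma requires disjointness after lifting the $b_\alpha$ to the common support $\U b_\alpha\cup\U b_\beta$, which is a priori stronger than disjointness inside $\mathcal{P}(Y)$. I would resolve this by showing $\U b_\alpha=Y$ whenever $\qp{a_\alpha}_\tow{I}\ne\bool{0}$: since $b_\alpha$ agrees with $\tilde{a}_\alpha$ modulo $I_Y$, and fineness places $\tilde{a}_\alpha\cap\bp{Z:y\notin Z}$ in $I_Y$ for every $y\in Y$, the set $b_\alpha\cap\bp{Z:y\in Z}$ is $I_Y$-equivalent to $\tilde{a}_\alpha$ and hence $I_Y$-positive, so there exists $Z\in b_\alpha$ with $y\in Z$. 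When $\U b_\alpha=\U b_\beta=Y$ the lemma's condition reduces to $b_\alpha\cap b_\beta=\emptyset$, already proved; the remaining case $\qp{a_\alpha}_\tow{I}=\bool{0}$ is disposed of by setting $b_\alpha=\emptyset$, for which the condition is vacuous.
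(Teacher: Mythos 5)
The paper does not prove this lemma at all --- it is quoted from Burke \cite[Lemma 2.4]{BUR97} --- so your attempt can only be measured against the standard argument, and in its core it matches that argument exactly: thin each representative by fineness so that every $Z$ in it contains the index $\alpha$, then remove the diagonal union of the pairwise intersections and invoke normality to see that nothing has changed modulo the ideal. Your verification that $\U b_\alpha$ recovers the full ambient set (so that the lemma's disjointness condition, stated on the joint support, really reduces to plain disjointness) is a point that is often glossed over, and you handle it correctly.

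There is, however, one genuine gap: the opening reduction ``we may assume $\lambda<\delta$'' is not justified, and the statement does not grant it. The algebra $\bool{B}(\tow{I})$ has cardinality $\le\delta$ but need not be ${<}\delta$-c.c.; for the towers actually used in Section \ref{section:cinque}, Proposition \ref{proposition:towdisj} shows it is nowhere ${<}\delta$-c.c., so maximal antichains of size $\delta$ exist. For such an antichain there is no single $Y\in V_\delta$ with $\lambda\subseteq Y$ and $\U a_\alpha\subseteq Y$ for all $\alpha$ (the supports may be cofinal in $V_\delta$, and $\delta\not\subseteq Y$), so the uniform-domain device on which your whole construction rests fails. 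The repair is routine but changes the bookkeeping: choose by recursion an increasing sequence $Y_\alpha\in V_\delta$ with $(\alpha+1)\cup\U a_\alpha\cup\bigcup_{\beta<\alpha}Y_\beta\subseteq Y_\alpha$, and define $b_\alpha$ as the set of $Z\subseteq Y_\alpha$ with $Z\cap\U a_\alpha\in a_\alpha$, $\alpha\in Z$, and $Z\cap Y_\beta\notin b_\beta$ for every $\beta\in Z\cap\alpha$ --- i.e.\ diagonalize against the previously constructed $b_\beta$ rather than against a lift to a common level. Your fineness argument still gives $\beta\in Z$ for every $Z\in b_\beta$ and $\U b_\beta=Y_\beta$, your normality computation still shows $\qp{b_\alpha}_\tow{I}=\qp{a_\alpha}_\tow{I}$ (using $\qp{b_\beta}_\tow{I}\le\qp{a_\beta}_\tow{I}$), and the disjointness condition on $\U b_\alpha\cup\U b_\beta=Y_\alpha$ follows as in your final paragraph. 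To be fair, the only place the paper invokes the lemma is the ${<}\delta$-mixing argument in Proposition \ref{proposition:full}, where indeed $\lambda<\delta$; for that application your proof is complete as written.
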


Having introduced the Boolean algebra $\bool{B}(\tow{I})$, we now discuss $\bool{B}(\tow{I})$-valued structures arising from the tower $\tow{I}$.

\begin{definition} Let $\tow{I}$ be a tower of inaccessible height $\delta$. For every infinite cardinal $\kappa<\delta$ we define
\[
\Ult(H_{\kappa^+},\tow{I})=\bp{f\colon\mathcal{P}(X)\to H_{\kappa^+} : X\in V_\delta}.
\]
For each $a\in H_{\kappa^+}$, let $c_a$ be the constant function
\[
\begin{split}
c_a\colon\mathcal{P}(\emptyset)&\longrightarrow H_{\kappa^+} \\
\emptyset &\longmapsto a
\end{split}
\]
in $\Ult(H_{\kappa^+},\tow{I})$.

If $\mathcal{A}\subseteq\bigcup_{n<\omega}\mathcal{P}(^n(H_{\kappa^+}))$ is any set of relations, we equip $\Ult(H_{\kappa^+},\tow{I})$ with a $\bool{B}(\tow{I})$-valued structure for $\in_\mathcal{A}$ as follows: given an $n$-ary relation $R$, whether it be the equality relation or any relation in $\mathcal{A}$, and given $f_i\colon\mathcal{P}(X_i)\to H_{\kappa^+}$ for $1\le i\le n$, define
\[
\Qp{R(f_1,\dots,f_n)}^{\Ult(H_{\kappa^+},\tow{I})}=\qp{\bp{Z\subseteq X_1\cup\dots\cup X_n : R(f_1(Z\cap X_1),\dots,f_n(Z\cap X_n))}}_\tow{I}.
\]
Furthermore, the constant symbols for $\emptyset$ and $\omega$ are interpreted as $c_\emptyset$ and $c_\omega$, respectively.
\end{definition}

\begin{remark} If $\mathcal{C}\subseteq V_\delta$ is cofinal, in the sense that for all $X\in V_\delta$ there exists $C\in\mathcal{C}$ such that $X\subseteq C$, then both $\bool{B}(\tow{I})$ and $\Ult(H_{\kappa^+},\tow{I})$ are completely determined by $\ap{I_X : X\in\mathcal{C}}$. This observation will be useful in Section \ref{section:cinque}, where we shall consider towers of the form $\ap{I_X : \kappa^+\subseteq X\in V_\delta}$.
\end{remark}

\begin{proposition}\label{proposition:full} Suppose $\tow{I}$ is a tower of inaccessible height $\delta$; let also $\kappa<\delta$ be an infinite cardinal and $\mathcal{A}\subseteq\bigcup_{n<\omega}\mathcal{P}(^n(H_{\kappa^+}))$. Then $\Ult(H_{\kappa^+},\tow{I})$ is a full $\bool{B}(\tow{I})$-valued structure for $\in_\mathcal{A}$ with the ${<}\delta$-mixing property. Moreover, given an $\in_\mathcal{A}$-formula $\varphi(x_1,\dots,x_n)$ and functions $f_i\colon\mathcal{P}(X_i)\to H_{\kappa^+}$ for $1\le i\le n$, letting $X=X_1\cup\dots\cup X_n$ yields
\begin{equation}\label{eq:value}
\Qp{\varphi(f_1,\dots,f_n)}^{\Ult(H_{\kappa^+},\tow{I})}=\qp{\bp{Z\subseteq X : \ap{H_{\kappa^+},\in_\mathcal{A}}\models\varphi(f_1(Z\cap X_1),\dots,f_n(Z\cap X_n))}}_\tow{I}.
\end{equation}
\end{proposition}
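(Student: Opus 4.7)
The plan is to verify the four assertions in turn. First, the axioms of Definition~\ref{definition:bvm} for $\Ult(H_{\kappa^+},\tow{I})$ follow directly from the set-theoretic properties of equality and from the congruence of relations on $H_{\kappa^+}$: reflexivity is witnessed by the full set $\mathcal{P}(X)$, symmetry by the fact that $f(Z\cap X)=g(Z\cap Y)$ is a symmetric predicate, while transitivity and the $R$-congruence clause both amount to inclusions of representative sets in some common ambient $\mathcal{P}(\mathcal{P}(X_1\cup\dots\cup X_n))$, which descend to the required inequalities in $\bool{B}(\tow{I})$.

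The heart of the argument is equation~\eqref{eq:value}, which I would prove by induction on the complexity of $\varphi$. Atomic formulae are handled by the defining clause. For negation and conjunction, I would note that complementation and intersection in $\mathcal{P}(\mathcal{P}(X_1\cup\dots\cup X_n))$ are compatible with the quotient by $I_{X_1\cup\dots\cup X_n}$ and hence realise $\neg$ and $\wedge$ in $\bool{B}(\tow{I})$. The existential step is where the real content lies. Given $\exists x\,\varphi(x,f_1,\dots,f_n)$, set $X=X_1\cup\dots\cup X_n$ and use the axiom of choice in $V$ to define $f\colon\mathcal{P}(X)\to H_{\kappa^+}$ as follows: whenever $\ap{H_{\kappa^+},\in_\mathcal{A}}\models\exists x\,\varphi(x,f_1(Z\cap X_1),\dots,f_n(Z\cap X_n))$, let $f(Z)$ be such a witness, and otherwise let $f(Z)=\emptyset$. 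This $f$ lies in $\Ult(H_{\kappa^+},\tow{I})$. By the inductive hypothesis applied to $\varphi$, for every $g\in\Ult(H_{\kappa^+},\tow{I})$ the value $\Qp{\varphi(g,f_1,\dots,f_n)}^{\Ult(H_{\kappa^+},\tow{I})}$ is represented by the set of $Z$ on which $\varphi(g(Z\cap\dom g),f_1(Z\cap X_1),\dots)$ holds, and on any such $Z$ the existential witnessed by $g(Z\cap\dom g)$ also holds, so by construction $\varphi(f(Z\cap X),f_1(Z\cap X_1),\dots)$ holds as well. Pulling both representatives back to a common power set shows $\Qp{\varphi(g,\dots)}\le\Qp{\varphi(f,\dots)}$ in $\bool{B}(\tow{I})$ for every $g$, so the supremum in $\RO(\bool{B}(\tow{I}))$ defining $\Qp{\exists x\,\varphi}^{\Ult(H_{\kappa^+},\tow{I})}$ is attained at $f$ and equals the right-hand side of~\eqref{eq:value}, completing the induction.

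Fullness is then immediate from the existential case, by taking $m=1$ and $\tau_1=f$. For the ${<}\delta$-mixing property, given an antichain $\bp{\qp{a_\alpha}_\tow{I}:\alpha<\lambda}$ in $\bool{B}(\tow{I})$ with $\lambda<\delta$ and functions $f_\alpha\colon\mathcal{P}(X_\alpha)\to H_{\kappa^+}$, I would apply Lemma~\ref{lemma:bur} to replace the $a_\alpha$'s by pairwise disjoint representatives $b_\alpha\subseteq\mathcal{P}(Y_\alpha)$, then use inaccessibility of $\delta$ to choose $X\in V_\delta$ containing $\lambda$ together with every $X_\alpha$ and $Y_\alpha$. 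Define $f\colon\mathcal{P}(X)\to H_{\kappa^+}$ by $f(Z)=f_\alpha(Z\cap X_\alpha)$ if there exists $\alpha\in Z\cap\lambda$ with $Z\cap Y_\alpha\in b_\alpha$ (which is unique by the disjointness clause of Lemma~\ref{lemma:bur}) and $f(Z)=\emptyset$ otherwise. Fineness of $I_X$ kills the $Z$ with $\alpha\notin Z$, so the set of $Z$ lifting $b_\alpha$ is contained, modulo $I_X$, in $\bp{Z:f(Z)=f_\alpha(Z\cap X_\alpha)}$, giving $\qp{a_\alpha}_\tow{I}\le\Qp{f=f_\alpha}^{\Ult(H_{\kappa^+},\tow{I})}$ for each $\alpha$.

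The main obstacle is the existential step of the induction on~\eqref{eq:value}: the supremum is \emph{a priori} taken in $\RO(\bool{B}(\tow{I}))$ over a large collection of functions, yet one must show it is actually realised by a single representative living in $\bool{B}(\tow{I})$ itself. Once this is settled via the choice-based witness construction, fullness is a triviality and the mixing property reduces to a careful disjointification based on Lemmas~\ref{lemma:tow} and~\ref{lemma:bur}.
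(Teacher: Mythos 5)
Your proposal is correct and follows essentially the same route as the paper: the Boolean-valued structure axioms are routine, the mixing property is obtained via Lemma~\ref{lemma:bur} and a piecewise-defined function on a sufficiently large $X\in V_\delta$, and \eqref{eq:value} together with fullness is proved by induction with the existential step handled by a choice-based witness function $f$ that realises the supremum inside $\bool{B}(\tow{I})$ itself. Your identification of the existential step as the point of real content, and the observation that the supremum is attained at the single element $f$, matches the paper's two-inequality argument exactly.
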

\begin{proof} First of all, since the conditions of Definition \ref{definition:bvm} are easily verified, $\Ult(H_{\kappa^+},\tow{I})$ is indeed a $\bool{B}(\tow{I})$-valued structure for $\in_\mathcal{A}$.

To establish the ${<}\delta$-mixing property, let $\lambda<\delta$ and suppose we are given an antichain $\bp{\qp{a_\alpha}_\tow{I} : \alpha<\lambda}$ in $\bool{B}(\tow{I})$ as well as functions $f_\alpha\colon\mathcal{P}(X_\alpha)\to H_{\kappa^+}$ for $\alpha<\lambda$. Choose $Y\in V_\delta$ sufficiently large that $X_\alpha\cup\U a_\alpha\subseteq Y$ for each $\alpha<\lambda$. By Lemma~\ref{lemma:bur}, we may assume without loss of generality that for all $\beta<\alpha$
\begin{equation}\label{eq:unique}
\bp{Z\subseteq Y : Z\cap\U a_\alpha\in a_\alpha\text{ and }Z\cap\U a_\beta\in a_\beta}=\emptyset.
\end{equation}
Now let $f\colon\mathcal{P}(Y)\to H_{\kappa^+}$ be defined as follows: given $Z\subseteq Y$, we distinguish two cases. If there exists $\alpha<\lambda$ such that $Z\cap\U a_\alpha\in a_\alpha$, then we let
\[
f(Z)=f_\alpha(Z\cap X_\alpha);
\]
note that, in this case, $\alpha$ must be unique by \eqref{eq:unique}. Otherwise, let $f(Z)$ be defined arbitrarily. Then it is clear that $\qp{a_\alpha}_\tow{I}\le\Qp{f=f_\alpha}^{\Ult(H_{\kappa^+},\tow{I})}$ for each $\alpha<\lambda$, as desired.

Finally, we establish by induction fullness and \eqref{eq:value} at the same time. For quantifier-free formulae there is nothing to prove, so assume $\varphi(x,y_1,\dots,y_n)$ satisfies \eqref{eq:value} for any choice of parameters and let $f_i\colon\mathcal{P}(X_i)\to H_{\kappa^+}$ for $1\le i\le n$ be given. Letting $X=X_1\cup\dots\cup X_n$ and temporarily dropping the superscript $\Ult(H_{\kappa^+},\tow{I})$ from the Boolean values, the inductive hypothesis implies that for all $g\in\Ult(H_{\kappa^+},\tow{I})$
\[
\Qp{\varphi(g,f_1,\dots,f_n)}\le\qp{\bp{Z\subseteq X : \ap{H_{\kappa^+},\in_\mathcal{A}}\models\exists x\varphi(x,f_1(Z\cap X_1),\dots,f_n(Z\cap X_n))}}_\tow{I}
\]
and therefore
\begin{equation}\label{eq:ma1}
\Qp{\exists x\varphi(x,f_1,\dots,f_n)}\le\qp{\bp{Z\subseteq X : \ap{H_{\kappa^+},\in_\mathcal{A}}\models\exists x\varphi(x,f_1(Z\cap X_1),\dots,f_n(Z\cap X_n))}}_\tow{I}.
\end{equation}
For the other inequality, we define a function $f\colon\mathcal{P}(X)\to H_{\kappa^+}$ as follows: given $Z\subseteq X$, if
\[
\ap{H_{\kappa^+},\in_\mathcal{A}}\models\exists x\varphi(x,f_1(Z\cap X_1)\dots,f_n(Z\cap X_n))
\]
then choose $f(Z)\in H_{\kappa^+}$ such that
\[
\ap{H_{\kappa^+},\in_\mathcal{A}}\models\varphi(f(Z),f_1(Z\cap X_1),\dots,f_n(Z\cap X_n));
\]
otherwise, let $f(Z)$ be arbitrary. From the definition of $f$ and the inductive hypothesis, it follows that
\begin{equation}\label{eq:ma2}
\qp{\bp{Z\subseteq X : \ap{H_{\kappa^+},\in_\mathcal{A}}\models\exists x\varphi(x,f_1(Z\cap X_1),\dots,f_n(Z\cap X_n))}}_\tow{I}\le\Qp{\varphi(f,f_1,\dots,f_n)}.
\end{equation}
Combining \eqref{eq:ma1} and \eqref{eq:ma2}, we obtain equality throughout.
\end{proof}

Equation \eqref{eq:value} in Proposition \ref{proposition:full} leads directly to the following corollary.

\begin{corollary}\label{corollary:abs} Let $\tow{I}$ be a tower of inaccessible height $\delta$, let $\kappa<\delta$ be an infinite cardinal and $\mathcal{A}\subseteq\bigcup_{n<\omega}\mathcal{P}(^n(H_{\kappa^+}))$. For every $\in_\mathcal{A}$-formula $\varphi(x_1,\dots,x_n)$ and parameters $a_1,\dots,a_n\in H_{\kappa^+}$ we have
\[
\ap{H_{\kappa^+},\in_\mathcal{A}}\models\varphi(a_1,\dots,a_n)\iff\Qp{\varphi(c_{a_1},\dots,c_{a_n})}^{\Ult(H_{\kappa^+},\tow{I})}=\bool{1}.
\]
\end{corollary}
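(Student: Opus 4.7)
The plan is to deduce the corollary as a direct instantiation of the value formula \eqref{eq:value} from Proposition \ref{proposition:full}, applied to the constant functions $c_{a_i}$. By definition each $c_{a_i}$ has domain $\mathcal{P}(\emptyset)=\bp{\emptyset}$, so in the notation of Proposition \ref{proposition:full} I would take $X_i=\emptyset$ for every $i$. Consequently $X_1\cup\dots\cup X_n=\emptyset$ and the only $Z\subseteq X_1\cup\dots\cup X_n$ is $Z=\emptyset$, for which $c_{a_i}(Z\cap X_i)=c_{a_i}(\emptyset)=a_i$.

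Substituting these choices into \eqref{eq:value}, the right-hand side collapses to
\[
\Qp{\varphi(c_{a_1},\dots,c_{a_n})}^{\Ult(H_{\kappa^+},\tow{I})}=\qp{\bp{Z\subseteq\emptyset : \ap{H_{\kappa^+},\in_\mathcal{A}}\models\varphi(a_1,\dots,a_n)}}_\tow{I},
\]
so the set on the right is $\mathcal{P}(\emptyset)$ precisely when $\ap{H_{\kappa^+},\in_\mathcal{A}}\models\varphi(a_1,\dots,a_n)$ and is $\emptyset$ otherwise.

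To close the argument I only need to observe that $\qp{\mathcal{P}(\emptyset)}_\tow{I}=\bool{1}$ and $\qp{\emptyset}_\tow{I}=\bool{0}$ in $\bool{B}(\tow{I})$; both identifications are immediate from the fact that $I_\emptyset$ is a proper ideal on $\mathcal{P}(\emptyset)$, as guaranteed by clause \eqref{tow1} of the definition of tower. The claimed biconditional then follows at once. There is no substantive obstacle here — the content is entirely absorbed into Proposition \ref{proposition:full} — the only point requiring care is to keep the domain/codomain bookkeeping of the $c_{a_i}$ straight when reading off \eqref{eq:value}.
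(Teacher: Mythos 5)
Your proposal is correct and matches the paper, which gives no separate argument but simply notes that the corollary ``leads directly'' from equation \eqref{eq:value} of Proposition \ref{proposition:full}; your instantiation with $X_i=\emptyset$, $Z=\emptyset$, and the observation that $\qp{\mathcal{P}(\emptyset)}_\tow{I}=\bool{1}$ while $\qp{\emptyset}_\tow{I}=\bool{0}$ is exactly the intended computation.
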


\section{Good ultrafilters and saturation}\label{section:tre}

Boolean-valued structures can be quotiented by an ultrafilter to recover a standard Tarski structure. This section is focused on how the combinatorial properties of the ultrafilter affect the model-theoretic properties of the quotient.

\begin{definition} Let $\bool{B}$ be a Boolean algebra and $\mathfrak{M}$ a $\bool{B}$-valued structure for $L$. Given an ultrafilter $U$ on $\bool{B}$, the \emph{quotient} of $\mathfrak{M}$ by $U$ is the $L$-structure $\mathfrak{M}/U$ defined as follows:
\begin{itemize}
\item its domain is $M/U=\bp{\qp{\tau}_U : \tau\in M}$, where $\qp{\tau}_U=\bp{\sigma\in M : \Qp{\tau=\sigma}^\mathfrak{M}\in U}$;
\item if $R\in L$ is an $n$-ary relation symbol, then
\[
R^{\mathfrak{M}/U}=\bp{\ap{\qp{\tau_1}_U,\dots,\qp{\tau_n}_U}\in{^n(M/U)} : \Qp{R(\tau_1,\dots,\tau_n)}^\mathfrak{M}\in U};
\]
\item if $c\in L$ is a constant symbol, then $c^{\mathfrak{M}/U}=\qp{c^\mathfrak{M}}_U$.
\end{itemize}
\end{definition}

When $\mathfrak{M}$ is full, the semantics of the quotient $\mathfrak{M}/U$ can be computed by means of the following theorem, generalizing the classic theorem of \L o\'s. For further details, see Viale \cite[Theorem 6.3.7]{viale:method}.

\begin{theorem}\label{theorem:los} Let $\bool{B}$ be a Boolean algebra and $\mathfrak{M}$ a full $\bool{B}$-valued structure for $L$. For every ultrafilter $U$ on $\bool{B}$ and every $L$-formula $\varphi(x_1,\dots,x_n)$ with parameters $\tau_1,\dots,\tau_n\in M$, we have
\[
\mathfrak{M}/U\models\varphi\bigl(\qp{\tau_1}_U,\dots,\qp{\tau_n}_U\bigr)\iff\Qp{\varphi(\tau_1,\dots,\tau_n)}^\mathfrak{M}\in U.
\]
\end{theorem}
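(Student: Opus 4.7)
The plan is a straightforward induction on the complexity of $\varphi$, with fullness entering only at the existential step.

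For atomic formulae, both cases are essentially a direct verification from the definition of $\mathfrak{M}/U$. If $\varphi$ is $\tau=\sigma$, then $\qp{\tau}_U=\qp{\sigma}_U$ iff $\sigma\in\qp{\tau}_U$ iff $\Qp{\tau=\sigma}^\mathfrak{M}\in U$, where the right-to-left direction uses reflexivity $\Qp{\tau=\tau}^\mathfrak{M}=\bool{1}$ together with the transitivity axiom of Definition \ref{definition:bvm}. If $\varphi$ is $R(\tau_1,\dots,\tau_n)$ for a relation symbol $R$, well-definedness of $R^{\mathfrak{M}/U}$ on equivalence classes is exactly the compatibility axiom of Definition \ref{definition:bvm}, and the biconditional is then built into the definition. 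Negation and conjunction are handled via $\Qp{\lnot\varphi}^\mathfrak{M}=\neg\Qp{\varphi}^\mathfrak{M}$ and $\Qp{\varphi\land\psi}^\mathfrak{M}=\Qp{\varphi}^\mathfrak{M}\wedge\Qp{\psi}^\mathfrak{M}$, combined with the fact that $U$ is a prime filter closed under finite meets.

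The existential step is where fullness becomes indispensable. One direction is immediate: if $\mathfrak{M}/U\models\exists x\,\varphi(x,\qp{\tau_1}_U,\dots,\qp{\tau_n}_U)$, any witness $\qp{\tau}_U$ yields, by induction, $\Qp{\varphi(\tau,\tau_1,\dots,\tau_n)}^\mathfrak{M}\in U$, and this value lies below $\Qp{\exists x\,\varphi(x,\tau_1,\dots,\tau_n)}^\mathfrak{M}$, so upward closure of $U$ forces the latter into $U$. For the converse, fullness supplies $\tau'_1,\dots,\tau'_m\in M$ with
\[
\Qp{\exists x\,\varphi(x,\tau_1,\dots,\tau_n)}^\mathfrak{M}=\bigvee_{i=1}^m\Qp{\varphi(\tau'_i,\tau_1,\dots,\tau_n)}^\mathfrak{M}.
\]
If this common value lies in $U$, primality of $U$ forces some individual disjunct $\Qp{\varphi(\tau'_i,\tau_1,\dots,\tau_n)}^\mathfrak{M}$ into $U$, and the inductive hypothesis then exhibits $\qp{\tau'_i}_U$ as a witness in $\mathfrak{M}/U$.

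The only real obstacle is precisely what fullness is designed to neutralize: without it, $\Qp{\exists x\,\varphi(x,\tau_1,\dots,\tau_n)}^\mathfrak{M}$ is computed in $\RO(\bool{B})$ as a potentially infinitary join, and there is no a priori reason for it to be represented as a finite disjunction whose summands live in $\bool{B}$. Absent such a representation, primality of $U$ cannot be invoked to extract a single disjunct in $U$, and the implication from Boolean value in $U$ to realized witness in the quotient would break down.
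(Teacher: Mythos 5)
Your proof is correct and is the standard \L o\'s-type induction on formula complexity; note that the paper itself does not prove Theorem \ref{theorem:los} but defers to Pierobon and Viale \cite{VIAPIE}, where essentially this same argument appears. Your closing paragraph also correctly isolates the two jobs fullness does at the existential step: it makes $\Qp{\exists x\varphi(x,\tau_1,\dots,\tau_n)}^\mathfrak{M}$ a finite join of elements of $\bool{B}$ (so that membership in the ultrafilter $U\subseteq\bool{B}$ is meaningful and upward closure applies), and it lets primality of $U$ select a single disjunct witnessing the existential in the quotient.
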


We are interested in obtaining quotients with some degree of saturation. Let $\delta$ be a cardinal; recall that an $L$-structure $\mathfrak{M}$ is \emph{${<}\delta$-saturated} if for every $A\in\qp{M}^{<\delta}$, all types with parameters in $A$ are realized in $\mathfrak{M}$.

\begin{theorem}[Morley and Vaught \cite{mv:hum}]\label{theorem:mv} Let $\delta$ be a cardinal. If $\mathfrak{N}$ is a ${<}\delta$-saturated $L$-structure, then for every model $\mathfrak{M}\models\Th(\mathfrak{N})$ such that $\vp{M}\le\delta$ there exists an elementary embedding $j\colon\mathfrak{M}\to\mathfrak{N}$.
\end{theorem}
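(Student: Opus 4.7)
The natural approach is the standard forth construction that exhibits $\mathfrak{M}$ as an elementary substructure of $\mathfrak{N}$ via transfinite recursion, using ${<}\delta$-saturation to extend a partial elementary embedding one point at a time.

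First I would fix an enumeration $M=\bp{m_\alpha : \alpha<\lambda}$ with $\lambda\le\delta$; since $\delta$ is a cardinal, every $\alpha<\lambda$ satisfies $\vp{\alpha}<\delta$. I then build, by recursion on $\alpha<\lambda$, a sequence $\ap{n_\alpha : \alpha<\lambda}$ in $N$ such that, writing $j_\alpha\colon\bp{m_\beta : \beta<\alpha}\to N$ for the map $m_\beta\mapsto n_\beta$, the following partial elementarity condition holds: for every $L$-formula $\psi(y_1,\dots,y_k)$ and every $\beta_1,\dots,\beta_k<\alpha$,
\[
\mathfrak{M}\models\psi(m_{\beta_1},\dots,m_{\beta_k})\iff\mathfrak{N}\models\psi(n_{\beta_1},\dots,n_{\beta_k}).
\]
The base case $\alpha=0$ reduces to $\mathfrak{M}\equiv\mathfrak{N}$, which is given. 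At a successor stage, having $j_\alpha$ partially elementary, I would consider the type
\[
p(x)=\bp{\psi(x,n_{\beta_1},\dots,n_{\beta_k}) : \beta_1,\dots,\beta_k<\alpha,\ \mathfrak{M}\models\psi(m_\alpha,m_{\beta_1},\dots,m_{\beta_k})}.
\]
The partial elementarity of $j_\alpha$ ensures that every finite subset of $p(x)$ is satisfiable in $\mathfrak{N}$ (apply it to the existential closure $\exists x\,\bigwedge\psi_i$, which holds in $\mathfrak{M}$ as witnessed by $m_\alpha$). Thus $p(x)$ is a consistent type over the parameter set $\bp{n_\beta : \beta<\alpha}$, which has cardinality $\vp{\alpha}<\delta$. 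By ${<}\delta$-saturation of $\mathfrak{N}$, the type is realized by some $n_\alpha\in N$, and defining $j_{\alpha+1}$ by $m_\alpha\mapsto n_\alpha$ preserves the partial elementarity condition. Limit stages pose no problem, since partial elementarity is a property of finite tuples and hence passes to unions of chains of partial embeddings.

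Setting $j=\bigcup_{\alpha<\lambda}j_\alpha$ produces a function $j\colon M\to N$ which is elementary by construction, completing the proof.

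I do not expect any real obstacle here: the only subtlety is confirming that the parameter set at each stage has cardinality strictly below $\delta$, which is immediate because $\delta$ is a cardinal and the enumeration has length $\lambda\le\delta$. Everything else is a routine application of the definition of ${<}\delta$-saturation and the assumption $\mathfrak{M}\equiv\mathfrak{N}$.
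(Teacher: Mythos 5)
Your proof is correct. The paper does not prove this statement at all --- it is quoted as a classical result of Morley and Vaught --- and your argument is exactly the standard one: a transfinite ``forth'' construction in which ${<}\delta$-saturation realizes, at each stage $\alpha$, the pushed-forward type of $m_\alpha$ over the fewer-than-$\delta$ parameters already placed, with finite satisfiability guaranteed by the partial elementarity of the map built so far. You also correctly isolate the one point that needs checking, namely that $\vp{\alpha}<\delta$ for all $\alpha<\lambda\le\delta$ because $\delta$ is a cardinal, so the saturation hypothesis applies at every stage.
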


Recall that a sentence is \emph{universal} if it is of the form $\forall x_1\dots\forall x_n\varphi(x_1,\dots,x_n)$ where $\varphi(x_1,\dots,x_n)$ is quantifier free. Given an $L$-structure $\mathfrak{N}$, let $\Th_\forall(\mathfrak{N})$ be the set of universal $L$-sentences which are true in $\mathfrak{N}$.

\begin{theorem}[{Keisler \cite[Theorem 2.1]{keisler:preservation}}]\label{theorem:keisler} Let $\delta$ be a cardinal. If $\mathfrak{N}$ is a ${<}\delta$-saturated $L$-structure, then for every model $\mathfrak{M}\models\Th_\forall(\mathfrak{N})$ such that $\vp{M}\le\delta$ there exists an embedding $e\colon\mathfrak{M}\to\mathfrak{N}$.
\end{theorem}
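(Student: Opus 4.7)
The plan is to build the embedding by transfinite recursion, using ${<}\delta$-saturation of $\mathfrak{N}$ at each step to realize a one-type extending a growing partial embedding. Fix $\lambda=|M|\le\delta$ and enumerate $M=\{m_\alpha:\alpha<\lambda\}$. I will construct a sequence $\langle n_\alpha:\alpha<\lambda\rangle$ in $N$ maintaining the invariant
\[
(C_\alpha)\qquad \mathfrak{M}\models\psi(m_{\beta_1},\dots,m_{\beta_k})\;\Longrightarrow\;\mathfrak{N}\models\psi(n_{\beta_1},\dots,n_{\beta_k})
\]
for every existential $L$-formula $\psi(\bar y)$ and all $\beta_1,\dots,\beta_k<\alpha$, so that the map $e(m_\alpha)=n_\alpha$ at the end of the recursion is the desired embedding.

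The base case $C_0$ is exactly the contrapositive of the assumption $\mathfrak{M}\models\Th_\forall(\mathfrak{N})$ applied to existential sentences. For the successor step at $\alpha$ I consider the one-type over $A_\alpha=\{n_\beta:\beta<\alpha\}$
\[
p_\alpha(x)=\{\psi(x,n_{\beta_1},\dots,n_{\beta_k}):\psi\text{ existential},\ \beta_i<\alpha,\ \mathfrak{M}\models\psi(m_\alpha,m_{\beta_1},\dots,m_{\beta_k})\}.
\]
A finite subset of $p_\alpha$ can be absorbed into a single existential formula $\psi(x,\bar n)$ satisfying $\mathfrak{M}\models\psi(m_\alpha,\bar m)$; applying $C_\alpha$ to the existential formula $\exists x\,\psi(x,\bar y)$ yields $\mathfrak{N}\models\exists x\,\psi(x,\bar n)$, so $p_\alpha$ is finitely satisfiable in $\mathfrak{N}$. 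Since $|A_\alpha|\le|\alpha|<\delta$, the ${<}\delta$-saturation of $\mathfrak{N}$ produces a realization $n_\alpha\in N$, and this realization is precisely what propagates $C_\alpha$ to $C_{\alpha+1}$; limit stages are immediate. Finally, atomic formulas and their negations are both existential, so $C_\lambda$ forces $e$ to preserve all literals, hence to be an embedding $\mathfrak{M}\to\mathfrak{N}$.

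The main obstacle is the choice of invariant. Merely requiring the partial map to be a partial embedding would not suffice, since to verify finite satisfiability of $p_\alpha$ one must transport the existential witness $\exists x\,\psi(x,\bar y)$ from $\mathfrak{M}$ to $\mathfrak{N}$ across the partial map, and this transfer is not provided by preservation of quantifier-free formulas alone. Strengthening the invariant to preservation of existential formulas is simultaneously what guarantees finite satisfiability at each stage and what the realization of $p_\alpha$ automatically ensures; crucially, the hypothesis $\mathfrak{M}\models\Th_\forall(\mathfrak{N})$ enters only to seed the base case.
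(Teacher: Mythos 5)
The paper does not prove this statement---it is quoted from Keisler \cite[Theorem 2.1]{keisler:preservation} as a known result---so there is no internal proof to compare against. Your argument is the standard direct one and it is correct: the key point, which you identify explicitly, is that the invariant must be preservation of \emph{existential} formulae (not merely quantifier-free ones), since the forward transfer of $\exists x\,\psi(x,\bar{y})$ along the partial map is exactly what makes each type $p_\alpha$ finitely satisfiable in $\mathfrak{N}$, while the hypothesis $\mathfrak{M}\models\Th_\forall(\mathfrak{N})$ is needed only to seed the empty tuple. The cardinality bookkeeping is also right: even when $\vp{M}=\delta$, each stage $\alpha<\delta$ uses only $\vp{\alpha}<\delta$ parameters, which is why ${<}\delta$-saturation suffices for models of size $\le\delta$ rather than $<\delta$. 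Two small points you could make explicit in a final write-up: a finite conjunction of existential formulae is again existential only after renaming bound variables, and at the final step you should note that since both an atomic formula and its negation are (degenerate) existential formulae, the map preserves and reflects all literals, which simultaneously gives injectivity (via $x\neq y$) and the embedding conditions for relation and constant symbols.
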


We introduce goodness, a combinatorial property of ultrafilters designed to produce saturated quotients.

\begin{definition} Let $\lambda$ be a cardinal, $\bool{B}$ a Boolean algebra, and $f\colon\qp{\lambda}^{<\aleph_0}\to\bool{B}$.
\begin{itemize}
\item $f$ is \emph{monotonic} if for all $S,T\in\qp{\lambda}^{<\aleph_0}$, $S\subseteq T$ implies $f(T)\le f(S)$.
\item $f$ is \emph{multiplicative} if for all $S,T\in\qp{\lambda}^{<\aleph_0}$, $f(S\cup T)=f(S)\wedge f(T)$.
\end{itemize}
\end{definition}

If $f$ is a multiplicative function and $S\subseteq T$, then $f(T)=f(S\cup T)=f(S)\wedge f(T)\le f(S)$, whence $f$ is monotonic.

\begin{definition}[{Mansfield \cite[Definition 4.2]{MANSFIELD}}]\label{definition:mansfieldgood} Let $\delta$ be a cardinal. An ultrafilter $U$ on a Boolean algebra $\bool{B}$ is \emph{${<}\delta$-good} if for every $\lambda<\delta$ and every monotonic function $f\colon\qp{\lambda}^{<\aleph_0}\to U$, there exists a multiplicative function $g\colon\qp{\lambda}^{<\aleph_0}\to U$ with the property that $g(S)\le f(S)$ for all $S\in\qp{\lambda}^{<\aleph_0}$.
\end{definition}

For further information on good ultrafilters and their model-theoretic significance, we refer the reader to the previous work of the first author \cite[Section 2]{parente-kobu}.

\begin{theorem}\label{theorem:mansfield} Let $\delta$ be an uncountable cardinal, $\bool{B}$ a ${<}\delta$-complete Boolean algebra, and $L$ a signature with $\vp{L}<\delta$. Suppose $\mathfrak{M}$ is a full $\bool{B}$-valued structure for $L$ which satisfies the ${<}\delta$-mixing property, and $U$ is a countably incomplete ${<}\delta$-good ultrafilter on $\bool{B}$; then the quotient $\mathfrak{M}/U$ is ${<}\delta$-saturated.
\end{theorem}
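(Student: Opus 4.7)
The plan is to show that every finitely satisfiable type $p(x) = \bp{\varphi_\alpha(x, \qp{\bar{\tau}_\alpha}_U) : \alpha < \lambda}$ over parameters in $\mathfrak{M}/U$, with $\lambda < \delta$, is realized in $\mathfrak{M}/U$. By Theorem \ref{theorem:los}, finite satisfiability translates into $f(S) := \Qp{\exists x \bigwedge_{\alpha\in S}\varphi_\alpha(x,\bar{\tau}_\alpha)}^\mathfrak{M}\in U$ for every finite $S\subseteq\lambda$, giving a monotonic $f\colon\qp{\lambda}^{<\aleph_0}\to U$. Fullness, together with the finite instance of the mixing property it entails, lets us select for each such $S$ a single witness $\tau_S\in M$ with $\Qp{\bigwedge_{\alpha\in S}\varphi_\alpha(\tau_S,\bar{\tau}_\alpha)}^\mathfrak{M}\geq f(S)$.

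Since $\bool{B}$ is ${<}\delta$-complete with $\delta>\omega$, it admits countable meets; combining this with the countable incompleteness of $U$, we fix a decreasing sequence $\bp{b_n}_{n<\omega}\subseteq U$ with $\bigwedge_n b_n = \bool{0}$. The auxiliary function $\tilde{f}(S):= f(S)\wedge b_{\vp{S}}$ is still monotonic and $U$-valued, and the ${<}\delta$-goodness of $U$ yields a multiplicative $g\colon\qp{\lambda}^{<\aleph_0}\to U$ with $g(S)\leq\tilde{f}(S)$. In particular $g(S)\leq b_{\vp{S}}$, which together with $\bigwedge_n b_n = \bool{0}$ guarantees that for each $c\in\bool{B}^+$ the set $\bp{S : c\leq g(S)}$ (closed under finite unions by multiplicativity) consists of sets of bounded finite size, and hence contains a maximum element.

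To combine the witnesses $\tau_S$ into a single $\tau^*\in M$ we apply the ${<}\delta$-mixing property to an antichain $\bp{e_S : S \in\qp{\lambda}^{<\aleph_0}}$ canonically extracted from the image of $g$, for instance $e_S := g(S)\wedge\bigwedge_{\alpha\in\lambda\setminus S}\neg g(\bp{\alpha})$; as $\lambda<\delta$ and $\bool{B}$ is ${<}\delta$-complete, all the meets and joins involved exist in $\bool{B}$, and multiplicativity of $g$ ensures pairwise disjointness. Mixing produces $\tau^*\in M$ with $e_S\leq\Qp{\tau^*=\tau_S}^\mathfrak{M}$ for every $S$, and since $e_S\leq g(S)\leq\Qp{\varphi_\alpha(\tau_S,\bar{\tau}_\alpha)}^\mathfrak{M}$ for $\alpha\in S$, the substitutivity axiom of Definition \ref{definition:bvm} yields $\Qp{\varphi_\alpha(\tau^*,\bar{\tau}_\alpha)}^\mathfrak{M}\geq\bigvee_{S\ni\alpha}e_S$ for each $\alpha<\lambda$.

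The main obstacle is establishing the identity $\bigvee_{S\ni\alpha}e_S = g(\bp{\alpha})\in U$. The inequality $\leq$ is immediate from $e_S\leq g(S)\leq g(\bp{\alpha})$ whenever $\alpha\in S$, but the reverse inequality is the Boolean-valued analogue of the classical Chang--Keisler argument for set-theoretic ultrapowers: one must show that every positive element below $g(\bp{\alpha})$ meets some $e_S$ with $\alpha\in S$, using the maximum-element property from the previous step together with the ${<}\delta$-completeness of $\bool{B}$ to carry out the pointwise descent in the Boolean algebra. Once this is in hand, Theorem \ref{theorem:los} converts $\Qp{\varphi_\alpha(\tau^*,\bar{\tau}_\alpha)}^\mathfrak{M}\in U$ into $\mathfrak{M}/U\models\varphi_\alpha(\qp{\tau^*}_U,\qp{\bar{\tau}_\alpha}_U)$ for every $\alpha<\lambda$, so that $\qp{\tau^*}_U$ realizes $p$.
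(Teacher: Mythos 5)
Your proposal reconstructs, correctly and in the right order, the argument that the paper itself does not spell out: the published proof simply cites Mansfield's Theorem 4.1 for Boolean ultrapowers and observes that fullness and the ${<}\delta$-mixing property are the only features of the Boolean-valued structure his proof uses. That is exactly the Keisler--Mansfield scheme you follow (monotonic $f$ from finite satisfiability via Theorem \ref{theorem:los}, a multiplicative $g\le f(S)\wedge b_{\vp{S}}$ from goodness and countable incompleteness, witnesses $\tau_S$ from fullness plus mixing, and a final mixing over an antichain indexed by $\qp{\lambda}^{<\aleph_0}$). A cosmetic point: fullness does not by itself entail finite mixing, but since ${<}\delta$-mixing is a hypothesis and $\delta$ is uncountable, the step where you merge the finitely many witnesses into a single $\tau_S$ is fine.

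The one step that needs repair is the tool you propose for the reverse inequality $g(\bp{\alpha})\le\bigvee_{S\ni\alpha}e_S$. Knowing that $\bp{S : c\le g(S)}$ has a maximum element $S^*$ only tells you that $c\not\le g(\bp{\beta})$ for each $\beta\notin S^*$, i.e.\ that each $c\wedge\neg g(\bp{\beta})$ is separately positive; this does not yield positivity of $c\wedge\bigwedge_{\beta\notin S^*}\neg g(\bp{\beta})=c\wedge e_{S^*}$. The correct descent is: given $\bool{0}<c\le g(\bp{\alpha})$, first pick $n$ with $c':=c\wedge\neg b_n>\bool{0}$ (possible since $\bigwedge_n b_n=\bool{0}$), note that $c'\wedge g(T)=\bool{0}$ whenever $\vp{T}\ge n$, and then choose $T$ of maximal cardinality in the nonempty family $\bp{T\ni\alpha : c'\wedge g(T)>\bool{0}}$, all of whose members have size less than $n$. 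Maximality and multiplicativity give $c'\wedge g(T)\wedge g(\bp{\beta})=c'\wedge g(T\cup\bp{\beta})=\bool{0}$ for every $\beta\notin T$, hence $\bool{0}<c'\wedge g(T)\le e_T\wedge c$ with $\alpha\in T$. Since every positive $c\le g(\bp{\alpha})$ meets some $e_S$ with $\alpha\in S$, the ${<}\delta$-completeness of $\bool{B}$ yields $g(\bp{\alpha})\le\bigvee_{S\ni\alpha}e_S$, and the rest of your argument goes through.
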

\begin{proof} Mansfield \cite[Theorem 4.1]{MANSFIELD} established this result for the special case of \emph{Boolean ultrapowers} $M^{(\bool{B})}/U$. However, by inspecting his proof one sees that fullness and ${<}\delta$-mixing are the only properties of the $\bool{B}$-valued structure actually used to obtain ${<}\delta$-saturation of the quotient. Thus, the very same proof applies to our case as well.
\end{proof}

The rest of this section is dedicated to the existence of good ultrafilters. Balcar and Franek \cite{balcarfranek} showed that if $\bool{B}$ is an infinite complete Boolean algebra such that $\sat(\bool{B}\rest b)=\sat(\bool{B})$ for all $b\in\bool{B}^+$, then there exist $2^{\vp{\bool{B}}}$ ultrafilters on $\bool{B}$ which are countably incomplete and ${<}\sat(\bool{B})$-good. Although this is sufficient for many purposes, it relies on the completeness of the Boolean algebra, whereas in Section~\ref{section:cinque} we shall need to work with the stationary tower, which is not complete.

We overcome this by giving a new existence proof closer to the original proof of Keisler \cite[Theorem 4.4]{keisler:good}.

\begin{definition}[{Mansfield \cite[Definition 4.3]{MANSFIELD}}] Let $\delta$ be a cardinal; a Boolean algebra $\bool{B}$ is \emph{${<}\delta$-disjointable} if for every $\lambda<\delta$ and every function $f\colon\lambda\to\bool{B}^+$, there exists a function $g\colon\lambda\to\bool{B}^+$ such that:
\begin{itemize}
\item for all $\alpha<\lambda$, $g(\alpha)\le f(\alpha)$;
\item for all $\beta<\alpha<\lambda$, $g(\alpha)\wedge g(\beta)=\bool{0}$.
\end{itemize}
\end{definition}

The following sufficient condition will be easier to verify for successor cardinals.

\begin{theorem}[{Balcar and Vojtáš \cite[Theorem 1.4]{balcarvojtas}}]\label{theorem:bv} Let $\lambda$ be an infinite cardinal and $\bool{B}$ a Boolean algebra. If $\lambda^+<\sat(\bool{B}\rest b)$ for all $b\in\bool{B}^+$, then $\bool{B}$ is ${<}\lambda^+$-disjointable.
\end{theorem}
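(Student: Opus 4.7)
The plan is to prove the claim by transfinite recursion on the index $\alpha < \mu$, selecting each $g(\alpha)$ from an antichain of size $\lambda^+$ supplied by the saturation hypothesis. First I would pass to the completion: for any $c \in \RO(\bool{B})^+$, density of $\bool{B}$ provides a positive $d \in \bool{B}$ below $c$, so any antichain of size $\lambda^+$ in $\bool{B} \mathbin{\upharpoonright} d$ (which exists by assumption) lives inside $\RO(\bool{B}) \mathbin{\upharpoonright} c$; hence $\RO(\bool{B})$ satisfies the hypothesis, and any disjoint refinement produced there projects back to $\bool{B}$ by choosing, for each $\alpha$, a positive element of $\bool{B}$ below the refining value. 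We may therefore assume $\bool{B}$ is complete.

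Given $\mu < \lambda^+$ and $f : \mu \to \bool{B}^+$, the invariant I would maintain at stage $\alpha$ is that $\bp{g(\beta) : \beta < \alpha}$ is pairwise disjoint with $g(\beta) \le f(\beta)$, and that for every $\gamma \in [\alpha, \mu)$ the element
\[
r_\gamma^{(\alpha)} = f(\gamma) \wedge \bigwedge_{\beta < \alpha} \neg g(\beta)
\]
is positive in $\bool{B}$. The combinatorial heart of the argument is the successor step. By the invariant, $r_\alpha^{(\alpha)} > \bool{0}$, so the hypothesis furnishes an antichain $\bp{d_\xi : \xi < \lambda^+}$ in $\bool{B} \mathbin{\upharpoonright} r_\alpha^{(\alpha)}$. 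For each $\gamma \in (\alpha, \mu)$ there can be at most one $\xi$ with $r_\gamma^{(\alpha)} \le d_\xi$, since two disjoint elements of an antichain cannot both dominate the same positive element; the set of bad $\xi$'s therefore has cardinality at most $\vp{(\alpha,\mu)} \le \mu < \lambda^+$, and picking any good $\xi^*$ and setting $g(\alpha) = d_{\xi^*}$ gives $r_\gamma^{(\alpha+1)} = r_\gamma^{(\alpha)} \wedge \neg d_{\xi^*} > \bool{0}$ for all $\gamma > \alpha$, preserving the invariant.

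The main obstacle will be the limit step. At a limit ordinal $\alpha$, completeness of $\bool{B}$ lets us form $r_\gamma^{(\alpha)} = \bigwedge_{\beta<\alpha} r_\gamma^{(\beta)}$, but a decreasing sequence of positive elements in a complete Boolean algebra can easily collapse to $\bool{0}$, and the saturation hypothesis does not by itself preclude this: small maximal antichains below $f(\gamma)$ may perfectly well coexist with arbitrarily high saturation. To secure positivity through limits, the successor choice has to be strengthened so that each $g(\beta)$ consumes only a controllable fraction of every future $r_\gamma^{(\alpha)}$. Concretely, I would fix in advance a sufficiently fine antichain structure below each $f(\gamma)$ and constrain each $g(\beta)$ to lie inside a single cell of it, thereby guaranteeing that $\bp{g(\beta) \wedge f(\gamma) : \beta < \alpha}$ never becomes pre-dense in $\bool{B} \mathbin{\upharpoonright} f(\gamma)$ at any limit $\alpha$. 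The bookkeeping required to do this simultaneously for all $\gamma \ge \alpha$ throughout the recursion is the delicate part, and it is precisely here that $\mu < \lambda^+$, together with the availability of antichains of size $\lambda^+$ below every positive element, supplies the necessary room. Once the invariant survives both successor and limit stages, the recursion yields the required disjoint refinement $g : \mu \to \bool{B}^+$.
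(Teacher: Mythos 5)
The paper offers no proof of this statement; it is quoted directly from Balcar and Vojt\'a\v{s}, so your argument has to stand on its own. Your reduction to the completion is sound, and your successor step is correct: below the remainder $r_\alpha^{(\alpha)}$ the hypothesis yields an antichain of size $\lambda^+$, each future remainder $r_\gamma^{(\alpha)}$ lies below at most one of its members (they are pairwise disjoint and $r_\gamma^{(\alpha)}>\bool{0}$), and since there are at most $\mu\le\lambda$ future indices some member can serve as $g(\alpha)$ without trivializing any future remainder. In fact, for $\lambda=\aleph_0$ this already finishes the proof: every index set then has order type at most $\omega$, the recursion never reaches a limit stage, and only finite meets are ever formed, so even the passage to $\RO(\bool{B})$ is unnecessary.

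The gap is exactly where you place it, at limit stages for uncountable $\lambda$, and the repair you sketch does not close it. You propose fixing in advance an antichain $C_\gamma$ of size $\lambda^+$ below each $f(\gamma)$ and confining each $g(\beta)\wedge f(\gamma)$ to a single cell of $C_\gamma$; granted that confinement, the limit counting does work, since at most $\vp{\alpha}\le\lambda$ cells get touched and any untouched cell is disjoint from every $g(\beta)$, witnessing $r_\gamma^{(\alpha)}>\bool{0}$. But to confine $g(\beta)$ \emph{simultaneously} for all of the up to $\lambda$ many future indices $\gamma$, you must intersect $g(\beta)$ with one element of the form $c\vee\neg f(\gamma)$ for each such $\gamma$ and keep the result positive --- an infinite descending meet, which is precisely the difficulty the device was meant to avoid; handling each $\gamma$ separately destroys the simultaneity the limit counting requires. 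This is not a presentational issue: for uncountable $\lambda$ the entire content of the Balcar--Vojt\'a\v{s} theorem is concentrated in surviving limit stages, and their published argument is organized around a mechanism (their ``refining systems'') built for exactly this purpose. Without supplying such a mechanism, your proposal establishes only the case $\lambda=\aleph_0$.
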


We now look at two consequences of ${<}\delta$-disjointability.

\begin{lemma}[Balcar and Vojtáš \cite{balcarvojtas}]\label{lemma:bv} Let $\delta>2$ be a cardinal. If a Boolean algebra $\bool{B}$ is ${<}\delta$-disjointable, then $\bool{B}$ is atomless and for all $b\in\bool{B}^+$ $\delta\le\dens(\bool{B}\rest b)$.
\end{lemma}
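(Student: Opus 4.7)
The plan is to derive both conclusions from applications of ${<}\delta$-disjointability to small families of positive elements.

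For atomlessness, I would argue by contradiction: suppose some $b\in\bool{B}^+$ is an atom. Since $2<\delta$, one can apply ${<}\delta$-disjointability to the constant map $f\colon 2\to\bool{B}^+$ with $f(0)=f(1)=b$, obtaining positive $g(0),g(1)\le b$ with $g(0)\wedge g(1)=\bool{0}$. But $b$ being an atom forces $g(0)=g(1)=b$, contradicting disjointness.

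For the density bound, fix $b\in\bool{B}^+$ and suppose toward a contradiction that $\lambda=\dens(\bool{B}\mathbin{\upharpoonright}b)<\delta$; enumerate some dense subset of $\bool{B}\mathbin{\upharpoonright}b$ as $\{d_\alpha:\alpha<\lambda\}$. Apply ${<}\delta$-disjointability (trivially if $\lambda\le 1$) to $\alpha\mapsto d_\alpha$ to obtain a pairwise disjoint sequence of positive elements $g(\alpha)\le d_\alpha$. The key step is the rigidity observation that $g(\alpha)=d_\alpha$ for every $\alpha<\lambda$: indeed, since $g(\alpha)$ is a positive element below $b$, density supplies some $d_\beta\le g(\alpha)$, and then $g(\beta)\le d_\beta\le g(\alpha)$ together with disjointness force $\beta=\alpha$, giving $d_\alpha\le g(\alpha)\le d_\alpha$. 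With this in hand, invoke the atomlessness just established to split $d_0$ into two disjoint positive parts $e_0,e_1\le d_0$; a final appeal to density produces some $d_\beta\le e_0\le d_0=g(0)$, whence $\beta=0$ by rigidity, yielding $d_0\le e_0$. This contradicts $e_1>\bool{0}$ being simultaneously below $d_0$ and disjoint from $e_0$.

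The main obstacle is the rigidity step, ensuring that the disjoint refinement $g$ actually coincides pointwise with the original enumeration $D$; once this is secured, atomlessness mechanically defeats density by producing a positive element of $\bool{B}\mathbin{\upharpoonright}b$ with no member of $D$ below it. Apart from this observation, the argument is a routine combination of the two combinatorial hypotheses.
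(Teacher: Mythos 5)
Your proof is correct. The paper itself gives no argument for this lemma --- it is quoted from Balcar and Vojt\'a\v{s} with a citation only --- so there is no in-text proof to compare against; your write-up supplies a complete and self-contained one. The atomlessness step is the standard two-element disjointification, and the density step is sound: the rigidity observation (that any disjoint refinement of a dense subset of $\bool{B}\mathbin{\upharpoonright}b$ must coincide with it pointwise) is exactly the extra leverage needed, since the naive argument --- disjointify the constant function with value $b$ to get antichains of every size $\lambda<\delta$ below $b$ --- only yields $\dens(\bool{B}\mathbin{\upharpoonright}b)\ge\sup\bp{\lambda:\lambda<\delta}$, which falls one cardinal short when $\delta$ is a successor. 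Your combination of rigidity with atomlessness closes that gap cleanly. Two minor points you may as well make explicit: the enumeration of the dense set should be taken injective (harmless, since $\lambda$ is its exact cardinality), and $\lambda\ge 1$ automatically because the empty set is not dense in $\bool{B}\mathbin{\upharpoonright}b$, so $d_0$ exists.
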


\begin{proposition}\label{proposition:disjcard} Let $\delta$ be a cardinal and $\bool{B}$ a Boolean algebra. If $\bool{B}$ is ${<}\delta$-disjointable and ${<}\delta$-complete, then $\delta^{<\delta}\le\vp{\bool{B}}$.
\end{proposition}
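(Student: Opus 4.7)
The plan is to prove by transfinite induction on $\lambda < \delta$ the sharper statement that $\vp{\bool{B} \mathbin{\upharpoonright} b} \ge \delta^\lambda$ for every $b \in \bool{B}^+$; taking $b = \bool{1}$ and passing to the supremum over $\lambda < \delta$ will then yield $\delta^{<\delta} \le \vp{\bool{B}}$. The case $\lambda = 0$ is trivial. For $\lambda = 1$, Lemma \ref{lemma:bv} supplies $\delta \le \dens(\bool{B} \mathbin{\upharpoonright} b) \le \vp{\bool{B} \mathbin{\upharpoonright} b}$. The successor step $\lambda = \mu + 1$ with $\mu \ge 1$ is automatic, since the cardinal identity $\delta^{\mu+1} = \delta^\mu$ reduces it to the inductive hypothesis.

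The real work happens at the limit step. Given a limit $\lambda < \delta$ of cofinality $\kappa$, I would fix a cofinal sequence $\ap{\lambda_i : i < \kappa}$ in $\lambda$. Since $\kappa \le \lambda < \delta$, applying ${<}\delta$-disjointability to the constant function $i \mapsto b$ produces pairwise disjoint positive elements $\ap{a_i : i < \kappa}$ with $a_i \le b$. The inductive hypothesis at level $\lambda_i$ gives $\vp{\bool{B} \mathbin{\upharpoonright} a_i} \ge \delta^{\lambda_i}$, and I would then consider the map
\[
\Phi \colon \prod_{i < \kappa}(\bool{B} \mathbin{\upharpoonright} a_i) \longrightarrow \bool{B} \mathbin{\upharpoonright} b, \qquad \Phi\bigl(\ap{e_i : i < \kappa}\bigr) = \bigvee_{i < \kappa} e_i,
\]
well-defined because $\kappa < \delta$ and $\bool{B}$ is ${<}\delta$-complete. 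Using Boolean distributivity together with the pairwise disjointness of the $a_i$, for each fixed $j < \kappa$ one recovers $e_j = \Phi\bigl(\ap{e_i : i < \kappa}\bigr) \wedge a_j$, because $e_i \wedge a_j = \bool{0}$ for $i \ne j$ while $e_j \le a_j$; this shows $\Phi$ is injective. Counting cardinalities gives
\[
\vp{\bool{B} \mathbin{\upharpoonright} b} \ge \prod_{i < \kappa} \delta^{\lambda_i} = \delta^{\sum_{i < \kappa} \lambda_i} = \delta^\lambda,
\]
the last equality because $\ap{\lambda_i : i < \kappa}$ is cofinal in $\lambda$ with $\kappa \le \lambda$.

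The main obstacle is the limit step, where the cardinal arithmetic must land exactly on $\delta^\lambda$ even when $\delta$ is singular and $\delta^\lambda$ strictly exceeds $\sup_{\mu < \delta}\mu^\lambda$. This is achieved by arranging $\kappa = \mathrm{cf}(\lambda) \le \lambda < \delta$, so that ${<}\delta$-disjointability supplies the $\kappa$ pairwise disjoint pieces below $b$ and ${<}\delta$-completeness forms the $\kappa$-suprema defining $\Phi$; once these ingredients are in place, the injectivity of $\Phi$ via Boolean distributivity is routine.
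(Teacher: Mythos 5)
Your argument is correct, and it is built from exactly the same three ingredients as the paper's proof: a disjoint refinement below $b$ supplied by ${<}\delta$-disjointability, the bound $\delta\le\dens(\bool{B}\mathbin{\upharpoonright}a)$ from Lemma \ref{lemma:bv} on each piece, and the injective map from a product of relative algebras into $\bool{B}$ given by taking suprema of elements sitting below pairwise disjoint conditions. The difference is organizational: you wrap these ingredients in a transfinite induction whose only substantive case is the limit step, where you take $\mathrm{cf}(\lambda)$ disjoint pieces and invoke the inductive bounds $\delta^{\lambda_i}$. The paper instead applies disjointability once to a constant $\lambda$-sequence, obtaining an antichain $A$ of size $\lambda$, and computes directly
\[
\vp{\bool{B}}\ge\prod_{a\in A}\vp{\bool{B}\mathbin{\upharpoonright}a}\ge\prod_{a\in A}\delta=\delta^\lambda,
\]
which disposes of all $\lambda<\delta$ (singular $\delta$ included) in one stroke. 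Your worry that $\delta^\lambda$ might exceed $\sup_{\mu<\delta}\mu^\lambda$ is a red herring once one notices that the product $\prod_{\alpha<\lambda}\delta$ is literally $\delta^\lambda$, so no induction is needed; your successor step is in any case vacuous and your base case $\lambda=1$ already contains the appeal to Lemma \ref{lemma:bv}. Two cosmetic points: you should dispose of the degenerate case $\delta\le 2$ separately, since Lemma \ref{lemma:bv} is stated only for $\delta>2$ (the paper does this in its first line), and you should make explicit that the exponents in your induction are cardinalities of ordinals, since otherwise the identity $\delta^{\mu+1}=\delta^\mu$ in your successor step would be false for cardinal successors.
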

\begin{proof} If $\delta\le 2$ there is nothing to prove, so let us assume $\delta>2$. Let $\lambda<\delta$ be arbitrary; since $\bool{B}$ is ${<}\delta$-disjointable, in particular there exists an antichain $A$ in $\bool{B}$ such that $\vp{A}=\lambda$. By Lemma \ref{lemma:bv}, for all $a\in A$ we have $\delta\le\dens(\bool{B}\rest a)$.

Let $\prod_{a\in A}\bool{B}\rest a$ denote the product algebra of the $\bool{B}\rest a$ for $a\in A$. Clearly, the function defined as
\[
\begin{split}
\prod_{a\in A}\bool{B}\rest a &\longrightarrow \bool{B} \\
\ap{b_a : a\in A} &\longmapsto \bigvee_{a\in A}b_a
\end{split}
\]
is injective. Hence,
\[
\vp{\bool{B}}\ge\prod_{a\in A}\vp{\bool{B}\rest a}\ge\prod_{a\in A}\dens(\bool{B}\rest a)\ge\prod_{a\in A}\delta=\delta^\lambda,
\]
as desired.
\end{proof}

The next lemma will constitute the base step of the recursive construction of Theorem \ref{theorem:egood}. Our proof is not different from the proof of Keisler \cite[Lemma 4B]{keisler:good}.

\begin{lemma}\label{lemma:4b} Let $\delta$ be an infinite cardinal; suppose $\bool{B}$ is a ${<}\delta$-disjointable ${<}\delta$-complete Boolean algebra. For every $E\in\qp{\bool{B}}^{<\delta}$ with the finite intersection property, there exists $Y\in\qp{\bool{B}}^{\aleph_0}$ such that $E\cup Y$ has the finite intersection property and $\bigwedge Y=\bool{0}$.
\end{lemma}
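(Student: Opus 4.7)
My plan is to construct $Y$ from antichains inside relative algebras below a cofinal subset of the finite-meets set $F = \{\bigwedge E_0 : E_0 \in \qp{E}^{<\omega}\}$. Observe that $F \subseteq \bool{B}^+$ by FIP, that $|F| \leq \max(|E|,\aleph_0) < \delta$, and that each relative algebra $\bool{B} \upharpoonright b$ for $b \in \bool{B}^+$ inherits both ${<}\delta$-disjointability and ${<}\delta$-completeness, and is atomless by Lemma \ref{lemma:bv}.

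I would pick a cofinal subset $S \subseteq F$ with respect to $\leq$ (i.e.\ for every $f \in F$ some $s \in S$ has $s \leq f$), of cardinality $\mu = |S| < \delta$; this is possible with $\mu = 1$ if $F$ has a minimum (which happens precisely when $\bigwedge E \in F$, in particular when $E$ is finite) and with $\aleph_0 \leq \mu \leq |F|$ otherwise. Enumerate $S = \{s_\alpha : \alpha < \mu\}$. By ${<}\delta$-disjointability applied to $\alpha \mapsto s_\alpha$ I obtain pairwise disjoint $d_\alpha \leq s_\alpha$ with $d_\alpha > \bool{0}$, for $\alpha < \mu$. Inside each $\bool{B} \upharpoonright d_\alpha$, a further application of ${<}\delta$-disjointability to the constant $\omega$-indexed family with value $d_\alpha$ yields a countable antichain $\{a_k^{(\alpha)} : k < \omega\}$ of positive elements below $d_\alpha$. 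Setting
\[
y_n = \bigvee_{\alpha < \mu} \bigvee_{k \geq n} a_k^{(\alpha)},
\]
which exists by ${<}\delta$-completeness since $\mu \cdot \omega < \delta$, I obtain a strictly decreasing sequence in $\bool{B}$. For FIP with $E$: given $e \in E \subseteq F$, cofinality supplies $\alpha$ with $s_\alpha \leq e$, so $a_k^{(\alpha)} \leq d_\alpha \leq s_\alpha \leq e$ and therefore $y_n \wedge e \geq a_n^{(\alpha)} > \bool{0}$; the same argument applied to $\bigwedge E_0 \in F$ handles finite meets from $E$, and $\bigwedge_{n \in I} y_n = y_{\max I} > \bool{0}$ for finite $I$ by monotonicity.

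The main obstacle is verifying $\bigwedge_n y_n = \bool{0}$ without appealing to $\mu$-distributivity of $\bool{B}$. I plan to argue locally and then combine via De Morgan. For each $\alpha < \mu$, the standard antichain computation in $\bool{B} \upharpoonright d_\alpha$ gives
\[
\Bigl(\bigwedge_n y_n\Bigr) \wedge d_\alpha \leq \bigwedge_n (y_n \wedge d_\alpha) = \bigwedge_n \bigvee_{k \geq n} a_k^{(\alpha)} = \bool{0},
\]
so $\bigwedge_n y_n \leq \neg d_\alpha$ for every $\alpha$. Hence $\bigwedge_n y_n \leq \bigwedge_\alpha \neg d_\alpha = \neg \bigvee_\alpha d_\alpha$. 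Combined with $\bigwedge_n y_n \leq y_0 \leq \bigvee_\alpha d_\alpha$, this forces $\bigwedge_n y_n = \bool{0}$. The argument uses only the existence of the $\mu$-sized join (guaranteed by ${<}\delta$-completeness) and De Morgan's law, not the stronger $\mu$-distributivity.
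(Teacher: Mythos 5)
Your construction is, modulo repackaging, the paper's own: refine the meet-closure of $E$ to a pairwise disjoint family, hang a countable ``vanishing'' family below each piece, and let $y_n$ be the join of the $n$-th tails. For uncountable $\delta$ the argument is correct, and two of your choices are in fact tidier than the original. Where the paper appeals to atomlessness (via Lemma \ref{lemma:bv}) to ``choose'' a strictly decreasing sequence $x_{\alpha,n}$ below each piece with $\bigwedge_{n}x_{\alpha,n}=\bool{0}$ --- a step whose justification really does need the countable join of an antichain to exist --- you get the same effect by a second, $\omega$-indexed application of ${<}\delta$-disjointability followed by countable joins, making the use of ${<}\delta$-completeness explicit. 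Likewise your De Morgan finish (every lower bound of $Y$ is below $\neg\bigvee_{\alpha}d_\alpha$, yet also below $y_0\le\bigvee_{\alpha}d_\alpha$) is a clean replacement for the paper's proof by contradiction. Two cosmetic remarks: the coinitial set $S$ is an unnecessary flourish (taking $S=F$ costs nothing), and you should put $\bool{1}$ into $F$ so that the degenerate case $E=\emptyset$ does not produce $y_n=\bigvee\emptyset=\bool{0}$.

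The one genuine defect is the case $\delta=\aleph_0$, which the statement permits and which the proof of Theorem \ref{theorem:egood} actually invokes. There ${<}\delta$-disjointability only refines \emph{finite} families, so the $\omega$-indexed disjointification inside $\bool{B}\mathbin{\upharpoonright}d_\alpha$ is unavailable, and your own inequality $\mu\cdot\omega<\delta$ is false, so ${<}\aleph_0$-completeness does not give you the joins $y_n$ at all. The paper's variant survives this case formally because its $y_n$ is a join of only $\lambda<\aleph_0$ many elements; the entire burden is then carried by the existence, below each of finitely many elements, of a decreasing $\omega$-sequence with infimum $\bool{0}$, which the paper extracts from atomlessness alone. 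Your write-up needs either a separate (and, given the weakness of the hypotheses there, genuinely different) treatment of $\delta=\aleph_0$, or an explicit restriction to uncountable $\delta$ together with a remark on the discrepancy with the stated generality.
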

\begin{proof} Without loss of generality, we may assume that $E$ is also closed under finite meets. Let $f\colon\lambda\to E$ be a surjective function for some $\lambda<\delta$. Since $\bool{B}$ is ${<}\delta$-disjointable, there exists a function $g\colon\lambda\to\bool{B}^+$ such that $g(\alpha)\le f(\alpha)$ and $g(\alpha)\wedge g(\beta)=\bool{0}$ for all $\beta<\alpha<\lambda$. By Lemma \ref{lemma:bv}, $\bool{B}$ is atomless and therefore for every $\alpha<\lambda$ we can choose a sequence $\ap{x_{\alpha,n} : n<\omega}$ such that:
\begin{enumerate}
\item\label{k1} $x_{\alpha,0}=g(\alpha)$;
\item\label{k2} for all $n<\omega$, $x_{\alpha,n+1}<x_{\alpha,n}$;
\item\label{k3} $\bigwedge_{n<\omega}x_{\alpha,n}=\bool{0}$.
\end{enumerate}
For every $n<\omega$ let us define $y_n=\bigvee_{\alpha<\lambda}x_{\alpha,n}$ and prove that $Y=\bp{y_n : n<\omega}$ has the desired properties.

Firstly, to prove that $E\cup Y$ has the finite intersection property, we observe that $Y$ closed under finite meets, in fact $y_{n+1}\le y_n$ for all $n<\omega$. Then for every $\alpha<\lambda$ and $n<\omega$ we have
\[
f(\alpha)\wedge y_n\ge g(\alpha)\wedge y_n=x_{\alpha,n}>\bool{0},
\]
as desired.

Secondly, we show that $\bigwedge Y=\bool{0}$. If not, then there must be some $b\in\bool{B}^+$ such that $b\le y_n$ for each $n<\omega$. In particular, from point \eqref{k1} we get $b\le y_0=\bigvee_{\alpha<\lambda}g(\alpha)$, hence there exists $\beta<\lambda$ such that $g(\beta)\wedge b>\bool{0}$. Using point \eqref{k3}, we can find some $m<\omega$ such that $g(\beta)\wedge b\wedge\neg x_{\beta,m}>\bool{0}$. Thus we obtain
\[
\bool{0}<g(\beta)\wedge b\wedge\neg x_{\beta,m}\le g(\beta)\wedge y_m\wedge\neg x_{\beta,m}=x_{\beta,m}\wedge\neg x_{\beta,m}=\bool{0},
\]
which is a contradiction.
\end{proof}

The following lemma will take care of the successor steps of the recursive construction.

\begin{lemma}[{Mansfield \cite[Theorem 4.2]{MANSFIELD}}]\label{lemma:4c} Let $\delta$ be an infinite cardinal; suppose $\bool{B}$ is a ${<}\delta$-disjointable ${<}\delta$-complete Boolean algebra. For every $E\in\qp{\bool{B}}^{<\delta}$ with the finite intersection property, every $\lambda<\delta$, and every monotonic function $f\colon\qp{\lambda}^{<\aleph_0}\to E$, there exists $E'\in\qp{\bool{B}}^{<\delta}$ with the finite intersection property such that $E\subseteq E'$, and a multiplicative function $g\colon\qp{\lambda}^{<\aleph_0}\to E'$ such that $g(S)\le f(S)$ for all $S\in\qp{\lambda}^{<\aleph_0}$.
\end{lemma}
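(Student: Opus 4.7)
The plan is to construct $g$ by recursion on singletons, since multiplicativity forces $g(S)=\bigwedge_{\alpha\in S}g(\bp{\alpha})$. I would first, without loss of generality, enlarge $E$ to be closed under finite meets; this keeps $\vp{E}<\delta$ and makes all relevant $f$-values, together with their finite meets, automatically lie in $E$.

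Enumerating $\lambda$ in order type $\vp{\lambda}$, I would define $b_\alpha:=g(\bp{\alpha})$ by transfinite recursion on $\alpha<\lambda$, maintaining at each stage two invariants: \emph{coherence}, namely $\bigwedge_{\beta\in S}b_\beta\le f(S)$ for every $S\in\qp{\alpha}^{<\aleph_0}$; and \emph{FIP preservation}, namely $E\cup\bp{\bigwedge_{\beta\in S}b_\beta:S\in\qp{\alpha}^{<\aleph_0}}$ still has the finite intersection property. At stage $\alpha$, coherence forces $b_\alpha$ to lie below
\[
u_\alpha:=\bigwedge_{T\in\qp{\alpha}^{<\aleph_0}}\Bigl(\neg\bigwedge_{\beta\in T}b_\beta\vee f(T\cup\bp{\alpha})\Bigr),
\]
which is a well-defined element of $\bool{B}$ by ${<}\delta$-completeness because its index set has cardinality $<\delta$.

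The hard part will be to ensure that a positive $b_\alpha\le u_\alpha$ can be chosen so as to preserve FIP, i.e.\ that $u_\alpha\wedge e\wedge\bigwedge_{\beta\in T}b_\beta>\bool{0}$ for every $e\in E$ and every $T\in\qp{\alpha}^{<\aleph_0}$. The naive attempt -- set $c:=e\wedge\bigwedge_{\beta\in T}b_\beta\wedge f(T\cup\bp{\alpha})$, which is positive by FIP, and try to verify $c\le u_\alpha$ -- reduces to inequalities of the shape $f(T\cup\bp{\alpha})\wedge f(T\cup T')\le f(T'\cup\bp{\alpha})$, which need not hold since $f$ is only monotonic, not multiplicative. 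This is the point at which I expect to invoke ${<}\delta$-disjointability, in the spirit of Keisler's use of independent families: below $e\wedge\bigwedge_{\beta\in T}b_\beta$ disjointability supplies a ${<}\delta$-indexed antichain that separates the constraints indexed by the various $T'$, so that on each piece only finitely many of the clauses defining $u_\alpha$ remain active and the restricted meet collapses to a positive $f$-value guaranteed by FIP.

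Once the recursion is complete, $g(S):=\bigwedge_{\beta\in S}b_\beta$ is multiplicative with $g(S)\le f(S)$, and $E':=E\cup g\bigl(\qp{\lambda}^{<\aleph_0}\bigr)$ has cardinality $<\delta$ and FIP by construction, completing the proof.
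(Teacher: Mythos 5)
The paper does not prove this lemma itself but relies on Mansfield's cited argument, which is a direct, one-shot construction; your recursive scheme departs from it and has a genuine gap exactly at the step you flag as ``the hard part''. The invariants you propose to carry through the recursion (coherence and preservation of the finite intersection property) are too weak to guarantee that a suitable $b_\alpha$ exists at stage $\alpha$. For a positive $p\le e\wedge\bigwedge_{\beta\in T}b_\beta$ to see ``only finitely many active clauses'' of $u_\alpha$, you need $p\wedge\bigwedge_{\beta\in T'}b_\beta=\bool{0}$ for all but finitely many $T'\in\qp{\alpha}^{<\aleph_0}$, i.e.\ $p$ must be disjoint from $b_\gamma$ for all but finitely many $\gamma\in\alpha\setminus T$. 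Whether any such $p$ exists depends entirely on how the earlier $b_\gamma$ were chosen: if $e\wedge\bigwedge_{\beta\in T}b_\beta$ happens to lie below $\bigvee\bp{b_\gamma : \gamma\in\alpha\setminus T}$ in a strong enough sense, no antichain below it can separate the constraints, and neither coherence nor FIP rules this out. So the appeal to disjointability ``in the spirit of Keisler'' is precisely the missing argument, and it cannot be supplied locally at stage $\alpha$; the $b_\gamma$ must be built from a single global antichain from the outset.

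That is what the cited proof does. Without loss of generality close $E$ under finite meets, and for $T\in\qp{\lambda}^{<\aleph_0}$ set $F(T)=\bigwedge\bp{f(T') : T'\subseteq T}$, a finite meet of elements of $E$. Apply ${<}\delta$-disjointability once to the family $\bp{e\wedge F(T) : e\in E,\ T\in\qp{\lambda}^{<\aleph_0}}$, which consists of positive elements by FIP and has size ${<}\delta$, obtaining pairwise disjoint $a_{e,T}\le e\wedge F(T)$. Then define
\[
g(S)=\bigvee\bp{a_{e,T} : e\in E,\ S\subseteq T},
\]
which exists by ${<}\delta$-completeness. One has $g(S)\le f(S)$ since each $a_{e,T}\le F(T)\le f(S)$ for $T\supseteq S$; multiplicativity follows from pairwise disjointness of the $a_{e,T}$ together with the distributive law $b\wedge\bigvee X=\bigvee(b\wedge X)$; and $e_0\wedge g(S_1)\wedge\dots\wedge g(S_n)=e_0\wedge g(S_1\cup\dots\cup S_n)\ge a_{e_0,S_1\cup\dots\cup S_n}>\bool{0}$ gives the finite intersection property of $E'=E\cup g\bigl[\qp{\lambda}^{<\aleph_0}\bigr]$, which has size ${<}\delta$. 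Note that here $b_\alpha=g(\bp{\alpha})$ is, below each $a_{e,T}$ with $\alpha\notin T$, actually $\bool{0}$; this is the disjointness property your stage-by-stage recursion would need but cannot recover after the fact. If you want to keep a recursive presentation, you must strengthen your inductive hypothesis to carry the whole antichain $\ap{a_{e,T}}$ along, at which point the recursion collapses into the direct construction above.
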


Finally, we apply Proposition \ref{proposition:disjcard}, Lemma \ref{lemma:4b}, and Lemma \ref{lemma:4c} to establish an existence result for good ultrafilters.

\begin{theorem}\label{theorem:egood} Let $\delta$ be an infinite cardinal; suppose $\bool{B}$ is a ${<}\delta$-disjointable ${<}\delta$-complete Boolean algebra of cardinality $\delta$. For every $E\in\qp{\bool{B}}^{<\delta}$ with the finite intersection property, there exists a countably incomplete ${<}\delta$-good ultrafilter $U$ on $\bool{B}$ such that $E\subset U$.
\end{theorem}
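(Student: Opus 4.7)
The plan is to imitate the classical Keisler-style recursive construction, using Lemma \ref{lemma:4b} to force countable incompleteness at the base stage and Lemma \ref{lemma:4c} to enforce goodness at successor stages. First, Proposition \ref{proposition:disjcard} combined with the hypothesis $\vp{\bool{B}}=\delta$ yields $\delta^{<\delta}=\delta$; in particular $\delta$ is regular and there are at most $\delta$ monotonic functions $f\colon\qp{\lambda}^{<\aleph_0}\to\bool{B}$ with $\lambda<\delta$. I would fix a bookkeeping enumeration $\ap{(f_\alpha,\lambda_\alpha,b_\alpha):\alpha<\delta}$ listing all such monotonic functions paired with all elements of $\bool{B}$.

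Next, I would recursively construct a continuous $\subseteq$-increasing sequence $\ap{E_\alpha:\alpha<\delta}$ of subsets of $\bool{B}$, each of cardinality $<\delta$ and with the finite intersection property. For the base, apply Lemma \ref{lemma:4b} to $E$ to obtain a countable $Y\subseteq\bool{B}$ with $\bigwedge Y=\bool{0}$ such that $E\cup Y$ has the finite intersection property, and set $E_0=E\cup Y$. At limit stages take unions; by regularity of $\delta$ this preserves the cardinality bound. At a successor stage $\alpha+1$, first set $E_\alpha^*=E_\alpha\cup\bp{b_\alpha}$ if this retains the finite intersection property, otherwise $E_\alpha^*=E_\alpha\cup\bp{\neg b_\alpha}$ (at least one works: if neither did, some finite $F,F'\subseteq E_\alpha$ would give $\bigwedge F\le\neg b_\alpha$ and $\bigwedge F'\le b_\alpha$, whence $\bigwedge(F\cup F')=\bool{0}$). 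If in addition $E_\alpha^*\cup\bp{f_\alpha(S):S\in\qp{\lambda_\alpha}^{<\aleph_0}}$ has the finite intersection property, apply Lemma \ref{lemma:4c} to this enlarged set and $f_\alpha$ to obtain some $E_{\alpha+1}\in\qp{\bool{B}}^{<\delta}$ with the finite intersection property extending $E_\alpha^*$, together with a multiplicative $g_\alpha\colon\qp{\lambda_\alpha}^{<\aleph_0}\to E_{\alpha+1}$ satisfying $g_\alpha(S)\le f_\alpha(S)$ for all $S$; otherwise let $E_{\alpha+1}=E_\alpha^*$.

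Let $U$ be the ultrafilter generated by $\bigcup_{\alpha<\delta}E_\alpha$. The first task at each successor stage guarantees that $U$ decides every $b\in\bool{B}$, hence is an ultrafilter extending $E$. Countable incompleteness is witnessed by $Y\subseteq U$ with $\bigwedge Y=\bool{0}$. For ${<}\delta$-goodness, take any monotonic $f\colon\qp{\lambda}^{<\aleph_0}\to U$ with $\lambda<\delta$: by the enumeration there is some $\alpha<\delta$ with $f=f_\alpha$ and $\lambda=\lambda_\alpha$, and since $f_\alpha$ takes values in $U$ the compatibility check at stage $\alpha+1$ succeeds, so the multiplicative refinement $g_\alpha\le f_\alpha$ was placed inside $E_{\alpha+1}\subseteq U$, as required.

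The main delicate point is guaranteeing that a recursion of length $\delta$ suffices to hit every monotonic function whose values end up in $U$; this reduces to the arithmetic $\delta^{<\delta}=\delta$, which Proposition \ref{proposition:disjcard} grants us. The secondary concern, that the cardinality of $E_\alpha$ stay below $\delta$ throughout, is handled by Lemma \ref{lemma:4c} itself (whose output remains of size $<\delta$) together with the regularity of $\delta$ at limit stages.
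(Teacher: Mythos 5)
Your proposal is correct and follows essentially the same route as the paper: base step via Lemma \ref{lemma:4b} to force countable incompleteness, successor steps via Lemma \ref{lemma:4c} to insert multiplicative refinements, deciding each $b_\alpha$ along the way, with $\delta^{<\delta}=\delta$ (hence regularity of $\delta$) from Proposition \ref{proposition:disjcard} making the bookkeeping possible.

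Two small remarks. First, your bookkeeping differs slightly from the paper's: you list each monotonic function once and, at its stage, test whether $E_\alpha^*\cup\operatorname{ran}(f_\alpha)$ has the finite intersection property, arguing a posteriori that the test succeeds whenever $f_\alpha$ ends up taking values in $U$; the paper instead lists each function cofinally often and only acts on $f_\alpha$ when its range is already contained in $D_\alpha$, using regularity of $\delta$ to find such a stage. Your variant is sound --- the test at stage $\alpha$ is a well-defined property of objects available at that stage, and since $U=\bigcup_\alpha E_\alpha$ (every element of $\bool{B}$ is decided) the union $E_\alpha^*\cup\operatorname{ran}(f)\subseteq U$ indeed has the finite intersection property when $f$ takes values in $U$ --- and it is arguably a bit cleaner. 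Second, your invariant that each $E_\alpha$ has cardinality $<\delta$ fails at stage $0$ when $\delta=\aleph_0$, since $E_0=E\cup Y$ is then infinite and Lemma \ref{lemma:4c} requires a set in $\qp{\bool{B}}^{<\delta}$; the paper disposes of this case separately by observing that any ultrafilter extending $E\cup Y$ is trivially ${<}\aleph_0$-good, and you should do the same before assuming $\delta$ uncountable.
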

\begin{proof}
If $E\in\qp{\bool{B}}^{<\delta}$ has the finite intersection property, by Lemma \ref{lemma:4b} there exists $Y\in\qp{\bool{B}}^{\aleph_0}$ such that $E\cup Y$ has the finite intersection property and $\bigwedge Y=\bool{0}$. Now, if $\delta=\aleph_0$ there is nothing else to prove: any ultrafilter including $E\cup Y$ will be countably incomplete and, trivially, ${<}\aleph_0$-good. Therefore, for the rest of the proof we shall assume that $\delta$ is uncountable.

Since $\vp{\bool{B}}=\delta$, let us enumerate $\bool{B}=\bp{b_\alpha : \alpha<\delta}$. By Proposition \ref{proposition:disjcard} we know that $\delta^{<\delta}=\delta$, therefore the set
\[
\bp{f\colon\qp{\lambda}^{<\aleph_0}\to\bool{B} : \lambda<\delta\text{ and }f\text{ is monotonic}}
\]
can be enumerated as $\bp{f_\alpha : \alpha<\delta}$ in such a way that each function is listed cofinally many times.

We recursively construct a sequence $\ap{D_\alpha : \alpha<\delta}$ such that, at each stage $\alpha<\delta$:
\begin{enumerate}
\item\label{gooduno} $D_\alpha\in\qp{\bool{B}^+}^{<\delta}$ has the finite intersection property;
\item\label{gooddue} if $\beta<\alpha$ then $D_\beta\subseteq D_\alpha$;
\item\label{goodtre} either $b_\alpha\in D_{\alpha+1}$ or $\neg b_\alpha\in D_{\alpha+1}$;
\item\label{goodquattro} if $f_\alpha\colon\qp{\lambda}^{<\aleph_0}\to D_\alpha$ where $\lambda<\delta$, then there exists a multiplicative function $g\colon\qp{\lambda}^{<\aleph_0}\to D_{\alpha+1}$ such that $g(S)\le f_\alpha(S)$ for all $S\in\qp{\lambda}^{<\aleph_0}$.
\end{enumerate}
We begin the recursive construction by letting $D_0=E\cup Y$. Clearly, $\vp{D_0}\le\vp{E}+\aleph_0<\delta$, as we are assuming $\delta$ to be uncountable.

For the inductive step, assume $D_\alpha$ has already been constructed. We first define $D_\alpha'$ as follows. In case the image of $f_\alpha$ is not included in $D_\alpha$, then let $D_\alpha'=D_\alpha$. On the other hand, suppose $f_\alpha\colon\qp{\lambda}^{<\aleph_0}\to D_\alpha$ for some $\lambda<\delta$. Then, by Lemma \ref{lemma:4c} there exists $D_\alpha'\in\qp{\bool{B}}^{<\delta}$ with the finite intersection property such that $D_\alpha\subseteq D_\alpha'$, and a multiplicative function $g\colon\qp{\lambda}^{<\aleph_0}\to D_\alpha'$ such that $g(S)\le f_\alpha(S)$ for all $S\in\qp{\lambda}^{<\aleph_0}$. Note that at least one of $D_\alpha'\cup\bp{b_\alpha}$ and $D_\alpha'\cup\bp{\neg b_\alpha}$ has the finite intersection property, hence we may define $D_{\alpha+1}=D_\alpha'\cup\bp{b_\alpha}$ in the former case, or $D_{\alpha+1}=D_\alpha'\cup\bp{\neg b_\alpha}$ in the latter case.

When $\gamma<\delta$ is a limit ordinal, we take $D_\gamma=\bigcup_{\alpha<\gamma}D_\alpha$. Finally, let $U=\bigcup_{\alpha<\delta} D_\alpha$. By condition \eqref{goodtre}, the filter $U$ is indeed an ultrafilter. Furthermore, by our choice of $D_0$, $U$ is countably incomplete and includes $E$. To see that $U$ is ${<}\delta$-good, let $\lambda<\delta$ and let $f\colon\qp{\lambda}^{<\aleph_0}\to U$ be a monotonic function. Since $\delta$ is a regular cardinal, there exists $\beta<\delta$ such that $f\colon\qp{\lambda}^{<\aleph_0}\to D_\beta$. Now choose some $\alpha>\beta$ such that $f=f_\alpha$; by \eqref{goodquattro} there exists a multiplicative function $g\colon\qp{\lambda}^{<\aleph_0}\to D_{\alpha+1}$ such that $g(S)\le f(S)$ for all $S\in\qp{\lambda}^{<\aleph_0}$, as desired.
\end{proof}

\section{The collapsing algebra}\label{section:quattro}

The purpose of this section is to show that, for an infinite regular cardinal $\kappa$ and an inaccessible $\delta>\kappa$, there exist a $\delta$-preserving complete Boolean algebra $\bool{B}$ and ultrafilters $U$ on $\bool{B}$, such that every model of the universal theory of $H_{\kappa^+}$ of cardinality $\le\delta$ is embeddable into $H_{\check{\delta}}^\bool{B}/U$.

Let $\delta$ be an inaccessible cardinal. For every infinite regular cardinal $\kappa<\delta$, consider the notion of forcing
\[
P^\kappa_{<\delta}=\bp{p : p\text{ is a function},\, \vp{p}<\kappa,\, \dom(p)\subset\delta\times\kappa,\, \forall\ap{\alpha,\xi}\in\dom(p)(p(\alpha,\xi)\in\alpha\cup\bp{0})}
\]
ordered by $q\le p\iff p\subseteq q$; then define the \emph{collapsing algebra} $\Coll(\kappa,{<}\delta)=\RO(P^\kappa_{<\delta})$. It is well known that $\Coll(\kappa,{<}\delta)$ has cardinality $\delta$ and includes $P^\kappa_{<\delta}$ as a ${<}\kappa$-closed dense subset.

\begin{proposition}\label{proposition:colldisj} $\Coll(\kappa,{<}\delta)$ is ${<}\delta$-disjointable.
\end{proposition}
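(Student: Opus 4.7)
My plan is to invoke the Balcar--Vojt\'a\v s criterion (Theorem \ref{theorem:bv}). Fixing $\lambda<\delta$, it suffices to verify that $\lambda^+<\sat(\Coll(\kappa,{<}\delta)\mathbin{\upharpoonright}b)$ for every $b\in\Coll(\kappa,{<}\delta)^+$; letting $\lambda$ range assembles the resulting ${<}\lambda^+$-disjointability statements into ${<}\delta$-disjointability. Since $\delta$ is inaccessible, $\lambda^+<\delta$ automatically, so the actual target becomes: for every $b\in\Coll(\kappa,{<}\delta)^+$ and every $\mu<\delta$, $\Coll(\kappa,{<}\delta)\mathbin{\upharpoonright}b$ contains an antichain of cardinality $\mu$.

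To produce one, fix such $b$ and $\mu$, and use density of $P^\kappa_{<\delta}$ in $\Coll(\kappa,{<}\delta)$ to pick $p\in P^\kappa_{<\delta}$ with $p\le b$. Since $\vp{\dom(p)}<\kappa$ and $\delta$ is inaccessible, I can select an ordinal $\alpha$ with $\mu\le\alpha<\delta$ such that $\ap{\alpha,0}\notin\dom(p)$. For each $\gamma<\alpha$, the condition
\[
p_\gamma=p\cup\bp{\ap{\ap{\alpha,0},\gamma}}
\]
still lies in $P^\kappa_{<\delta}$ (because $\gamma\in\alpha$ and $\vp{p_\gamma}<\kappa$), and distinct $p_\gamma,p_{\gamma'}$ are incompatible since they disagree at the coordinate $\ap{\alpha,0}$. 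The incompatibility-preserving embedding $P^\kappa_{<\delta}\hookrightarrow\Coll(\kappa,{<}\delta)$ then turns $\bp{p_\gamma:\gamma<\alpha}$ into an antichain of size $\alpha\ge\mu$ below $b$, as required.

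I do not see a hard step here: the argument reduces to the familiar observation that $P^\kappa_{<\delta}$-conditions have size below $\kappa$ and so leave unused coordinates in $\delta\times\kappa$ along which one may split. An equally serviceable alternative would be to by-pass Theorem \ref{theorem:bv} and establish ${<}\delta$-disjointability directly: given $f\colon\lambda\to\Coll(\kappa,{<}\delta)^+$, pick dense $p_\alpha\le f(\alpha)$ for each $\alpha<\lambda$ and append a single fresh coordinate $\ap{\beta,0}$ with pairwise distinct values, where $\beta<\delta$ is chosen above the first-coordinate projections of $\bigcup_{\alpha<\lambda}\dom(p_\alpha)$ (inaccessibility of $\delta$ ensures $\vp{\lambda\cdot\kappa}<\delta$, hence such a $\beta$ exists). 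The Balcar--Vojt\'a\v s route I favour simply packages this combinatorics more conceptually.
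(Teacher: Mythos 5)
Your proof is correct and follows essentially the same route as the paper: both verify the hypothesis of Theorem \ref{theorem:bv} by splitting an arbitrary condition $p\in P^\kappa_{<\delta}$ along a fresh coordinate $\ap{\alpha,0}$ with $\alpha$ large enough, then pass to the relative algebras by density. Your explicit remark that the ${<}\lambda^+$-disjointability statements assemble into ${<}\delta$-disjointability (using that $\delta$ is a limit cardinal) is a point the paper leaves implicit, and your sketched direct construction is a fine alternative, but neither changes the substance of the argument.
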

\begin{proof} We first claim that for all $p\in P^\kappa_{<\delta}$ and $\lambda<\delta$ there exists an antichain $\bp{q_\beta : \beta<\lambda}$ in $P^\kappa_{<\delta}$ such that $q_\beta\le p$ for each $\beta<\lambda$. Given a condition $p$, it is possible to choose $\alpha$ such that $\lambda\le\alpha<\delta$ and $\ap{\alpha,0}\notin\dom(p)$. Then, for each $\beta<\lambda$ take $q_\beta=p\cup\bp{\ap{\ap{\alpha,0},\beta}}$.

By density, this implies that for all $b\in\Coll(\kappa,{<}\delta)^+$ we have $\delta\le\sat(\Coll(\kappa,{<}\delta)\rest b)$. Hence, by Theorem \ref{theorem:bv} we conclude that $\Coll(\kappa,{<}\delta)$ is ${<}\delta$-disjointable.
\end{proof}

\begin{remark} To prove Proposition \ref{proposition:colldisj} it is not necessary to use Theorem \ref{theorem:bv}. In fact, the ${<}\delta$-disjointability of $\Coll(\kappa,{<}\delta)$ can also be established directly by a density argument about $P^\kappa_{<\delta}$.
\end{remark}

\begin{corollary}\label{corollary:coll} For every $E\in\qp{\Coll(\kappa,{<}\delta)}^{<\delta}$ with the finite intersection property, there exists a countably incomplete ${<}\delta$-good ultrafilter $U$ on $\Coll(\kappa,{<}\delta)$ such that $E\subset U$.
\end{corollary}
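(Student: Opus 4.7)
The plan is to obtain Corollary \ref{corollary:coll} as a direct application of Theorem \ref{theorem:egood} to the Boolean algebra $\bool{B}=\Coll(\kappa,{<}\delta)$, so the task reduces to verifying the hypotheses of that theorem in the present setting.

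First I would record the three structural facts about $\Coll(\kappa,{<}\delta)$. Completeness is immediate because, by construction, $\Coll(\kappa,{<}\delta)=\RO(P^\kappa_{<\delta})$ is a complete Boolean algebra, hence in particular ${<}\delta$-complete. The ${<}\delta$-disjointability has just been established in Proposition \ref{proposition:colldisj}. The cardinality equality $\vp{\Coll(\kappa,{<}\delta)}=\delta$ is recalled in the paragraph preceding Proposition \ref{proposition:colldisj}; it relies on the inaccessibility of $\delta$ together with the fact that $P^\kappa_{<\delta}$ is dense of size $\delta$.

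With these three facts in hand, Theorem \ref{theorem:egood} applies verbatim to $\bool{B}=\Coll(\kappa,{<}\delta)$: for every $E\in\qp{\Coll(\kappa,{<}\delta)}^{<\delta}$ with the finite intersection property, it produces a countably incomplete ${<}\delta$-good ultrafilter $U$ extending $E$. There is no real obstacle here; the only point of potential concern is that Theorem \ref{theorem:egood} requires $\vp{\bool{B}}=\delta$ exactly, rather than $\vp{\bool{B}}\le\delta$, but this is assured by the preceding paragraph. The proof is therefore essentially a one-line invocation of Theorem \ref{theorem:egood}.
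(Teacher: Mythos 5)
Your proposal is correct and matches the paper's own proof, which simply cites Theorem \ref{theorem:egood} together with Proposition \ref{proposition:colldisj}; your explicit verification of the ${<}\delta$-completeness and the cardinality hypothesis $\vp{\Coll(\kappa,{<}\delta)}=\delta$ just spells out what the paper leaves implicit from the preceding discussion.
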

\begin{proof} By Theorem \ref{theorem:egood} and Proposition \ref{proposition:colldisj}.
\end{proof}

Note that, since $\Coll(\kappa,{<}\delta)$ is $\delta$-preserving, the Boolean-valued structure $H_{\check{\delta}}^{\Coll(\kappa,{<}\delta)}$ represents $H_\delta$ of the forcing extension, in the sense of Proposition \ref{proposition:rep}.

\begin{theorem}\label{theorem:mainuno} Let $\delta$ be an inaccessible cardinal and $\kappa<\delta$ be an infinite regular cardinal. Every $E\in\qp{\Coll(\kappa,{<}\delta)}^{<\delta}$ with the finite intersection property can be extended to an ultrafilter $U$ on $\Coll(\kappa,{<}\delta)$ such that:
\begin{itemize}
\item the quotient $\ap{H_{\check{\delta}}^{\Coll(\kappa,{<}\delta)}/U,\in_{\Delta_0}}$ is a model of $\Th_\forall(H_{\kappa^+},\in_{\Delta_0})$;
\item for every model $\mathfrak{M}\models\Th_\forall(H_{\kappa^+},\in_{\Delta_0})$ such that $\vp{M}\le\delta$, there exists an embedding $e\colon\mathfrak{M}\to\ap{H_{\check{\delta}}^{\Coll(\kappa,{<}\delta)}/U,\in_{\Delta_0}}$ of $\in_{\Delta_0}$-structures.
\end{itemize}
\end{theorem}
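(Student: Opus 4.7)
The overall plan is to apply Corollary \ref{corollary:coll} to extract a suitable ultrafilter, verify that the resulting quotient is ${<}\delta$-saturated via Theorem \ref{theorem:mansfield}, and then combine the preservation properties of the collapse with Keisler's embedding theorem (Theorem \ref{theorem:keisler}) to obtain the two conclusions.

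First I invoke Corollary \ref{corollary:coll} to extend $E$ to a countably incomplete ${<}\delta$-good ultrafilter $U$ on $\Coll(\kappa,{<}\delta)$, and set $\mathfrak{N}=\ap{H_{\check{\delta}}^{\Coll(\kappa,{<}\delta)}/U,\in_{\Delta_0}}$. To deduce ${<}\delta$-saturation of $\mathfrak{N}$ from Theorem \ref{theorem:mansfield}, I check that $\Coll(\kappa,{<}\delta)$ is complete and hence ${<}\delta$-complete, that the signature $\in_{\Delta_0}$ is countable and so of size less than the inaccessible $\delta$, that Proposition \ref{proposition:mixing} supplies the mixing property for $H_{\check{\delta}}^{\Coll(\kappa,{<}\delta)}$, and that fullness then follows from Proposition \ref{proposition:mixingfull} applied with any cardinal exceeding $\vp{H_{\check{\delta}}^{\Coll(\kappa,{<}\delta)}}$.

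For the first bullet, let $\psi=\forall\bar{x}\,\varphi(\bar{x})\in\Th_\forall(H_{\kappa^+},\in_{\Delta_0})$. Since $P^\kappa_{<\delta}$ is a ${<}\kappa$-closed dense subset of $\Coll(\kappa,{<}\delta)$, the multi-variable analogue of Lemma \ref{lemma:coll} (whose proof parallels the one-variable case) yields $\Qp{\psi}^{V^{\Coll(\kappa,{<}\delta)}}=\bool{1}$. Because Boolean values of quantifier-free $\in_{\Delta_0}$-formulae with names in $H_{\check{\delta}}^{\Coll(\kappa,{<}\delta)}$ are inherited from $V^{\Coll(\kappa,{<}\delta)}$, the infimum defining $\Qp{\psi}^{H_{\check{\delta}}^{\Coll(\kappa,{<}\delta)}}$ ranges over a smaller set and is therefore at least $\bool{1}$, so $\Qp{\psi}^{H_{\check{\delta}}^{\Coll(\kappa,{<}\delta)}}=\bool{1}\in U$ and Theorem \ref{theorem:los} gives $\mathfrak{N}\models\psi$.

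For the second bullet, since $\mathfrak{N}$ is ${<}\delta$-saturated I aim to apply Theorem \ref{theorem:keisler}, for which it is enough to verify $\mathfrak{M}\models\Th_\forall(\mathfrak{N})$; given the hypothesis on $\mathfrak{M}$ this reduces to the inclusion $\Th_\forall(\mathfrak{N})\subseteq\Th_\forall(H_{\kappa^+},\in_{\Delta_0})$, or contrapositively to transferring existential $\in_{\Delta_0}$-sentences from $H_{\kappa^+}$ to $\mathfrak{N}$. Picking witnesses $\bar{a}\in H_{\kappa^+}\subseteq H_\delta$ and invoking $\Delta_0$-absoluteness for the canonical names $\check{\bar{a}}$ — an easy tuple-version of Remark \ref{remark:abs} — I get Boolean value $\bool{1}$ for the existential sentence, and Theorem \ref{theorem:los} once more concludes. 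I expect the main delicate point to be the coordination of mixing, fullness, and goodness needed to invoke Theorem \ref{theorem:mansfield}; beyond that, the argument is a careful bookkeeping of Boolean values and of which quantifier-free fragments transfer through the quotient.
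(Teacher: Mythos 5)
Your proposal is correct and follows essentially the same route as the paper's proof: Corollary \ref{corollary:coll} for the good ultrafilter, Propositions \ref{proposition:mixing} and \ref{proposition:mixingfull} for fullness, Lemma \ref{lemma:coll} for the first bullet, and Remark \ref{remark:abs} together with Theorems \ref{theorem:mansfield} and \ref{theorem:keisler} for the second. The additional details you supply (the tuple versions of Lemma \ref{lemma:coll} and Remark \ref{remark:abs}, and the infimum argument for passing from $V^{\Coll(\kappa,{<}\delta)}$ to $H_{\check{\delta}}^{\Coll(\kappa,{<}\delta)}$) are exactly the steps the paper leaves implicit.
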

\begin{proof} Applying Corollary \ref{corollary:coll}, let $U$ be a countably incomplete ${<}\delta$-good ultrafilter on $\Coll(\kappa,{<}\delta)$ such that $E\subset U$. From Proposition \ref{proposition:mixing} we know that $H_{\check{\delta}}^{\Coll(\kappa,{<}\delta)}$ satisfies the mixing property, hence it is full by Proposition \ref{proposition:mixingfull}.

To prove the first point, let $\psi$ be a universal $\in_{\Delta_0}$-sentence which is true in $H_{\kappa^+}$. Without loss of generality, $\psi$ is of the form $\forall x\varphi(x)$ where $\varphi(x)$ is quantifier free. Since
\[
\ap{H_{\kappa^+},\in_{\Delta_0}}\models\forall x\varphi(x),
\]
Lemma \ref{lemma:coll} gives $\Qp{\forall x\varphi(x)}^{V^{\Coll(\kappa,{<}\delta)}}=\bool{1}$, which easily implies $\Qp{\forall x\varphi(x)}^{H_{\check{\delta}}^{\Coll(\kappa,{<}\delta)}}=\bool{1}$. It follows that
\[
\ap{H_{\check{\delta}}^{\Coll(\kappa,{<}\delta)}/U,\in_{\Delta_0}}\models\forall x\varphi(x),
\]
which is what we had to prove.

For the second point, let $\mathfrak{M}$ be any model of $\Th_\forall(H_{\kappa^+},\in_{\Delta_0})$ such that $\vp{M}\le\delta$. Remark \ref{remark:abs} implies that $\mathfrak{M}$ is also a model of $\Th_\forall(H_{\check{\delta}}^{\Coll(\kappa,{<}\delta)}/U,\in_{\Delta_0})$. Then Theorem \ref{theorem:keisler} and Theorem \ref{theorem:mansfield} give the desired embedding.
\end{proof}

The main drawbacks of Theorem \ref{theorem:mainuno} are, first, the requirement of $\kappa$ being regular and, second, the signature being restricted to $\Delta_0$-definable relations. In the next section, we overcome both limitations by using the stationary tower to establish a stronger universality result.

\section{The stationary tower forcing}\label{section:cinque}

In parallel with the previous section, we provide a second universality result for models of the form $\Ult(H_{\kappa^+},\tow{I})/U$, where $\tow{I}$ is a tower of non-stationary ideals. We employ the following definition of stationarity, which is due to Shelah; see also Woodin \cite[Definition 3]{WOO88} and the monograph of Larson \cite{LARSON}.

\begin{definition} A non-empty set $a$ is \emph{stationary} if for every function $f\colon{^{<\omega}{(\U a)}}\to\U a$ there exists $Z\in a$ such that $f[^{<\omega}Z]\subseteq Z$.

A set $a$ is \emph{stationary in} $b$ if $a$ is stationary, $a\subseteq b$, and $\U a=\U b$.
\end{definition}

We clarify the relation between stationarity and Jech's notion of closed unbounded sets.

\begin{definition}[{Jech \cite[Definition 3.1]{jech.combinatorial_problems}}] Let $\kappa$ be an infinite cardinal and $X$ a set such that $\kappa^+\subseteq X$. A subset $C\subseteq\qp{X}^{\le\kappa}$ is
\begin{itemize}
\item \emph{closed} if it is closed under unions of chains of length $\le\kappa$,
\item \emph{unbounded} if for every $Z\in\qp{X}^{\le\kappa}$ there exists $Y\in C$ such that $Z\subseteq Y$.
\end{itemize}
\end{definition}

A key example is the set
\[
\mathcal{P}_{\kappa^+}(X)=\bp{Z\in\qp{X}^{\le\kappa} : Z\cap\kappa^+\in\kappa^+},
\]
which is closed and unbounded.

It follows from the definitions that if $a\subseteq\qp{X}^{\le\kappa}$ has non-empty intersection with every closed unbounded set, then $a$ is stationary in $\qp{X}^{\le\kappa}$. The converse is not true in general, for instance the set of countable subsets of $\omega_2$ is stationary, but it is disjoint from the closed unbounded set $\qp{\omega_2}^{\aleph_1}$. However, we have the following result.

\begin{proposition}[Kueker \cite{kueker}] For every closed unbounded $C\subseteq\qp{X}^{\le\kappa}$ there exists a function $f\colon {^{<\omega}X}\to X$ such that $\bp{Z\in\mathcal{P}_{\kappa^+}(X) : f[^{<\omega}Z]\subseteq Z}\subseteq C$.
\end{proposition}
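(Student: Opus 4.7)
The plan is to encode into $f$ a monotone ``hull'' operation $h$ on $\qp{X}^{<\omega}$ taking values in $C$, together with enumerations of each $h(s)$ by ordinals below $\kappa$, so that any $Z \in \mathcal{P}_{\kappa^+}(X)$ closed under $f$ absorbs $h(s)$ for every finite $s \subseteq Z$ and is therefore equal to its own ``$C$-hull''. Closure of $C$ under $\le\kappa$-chains then places $Z$ in $C$.

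Concretely, I would first build $h\colon \qp{X}^{<\omega} \to C$ by recursion on $\vp{s}$: given $h(s')$ for all $s' \subsetneq s$, the set $s \cup \bigcup_{s' \subsetneq s} h(s')$ has cardinality at most $\kappa$, and by unboundedness of $C$ we can pick $h(s) \in C$ containing it, which makes $h$ monotone. For each tuple $\bar{s} = \ap{x_0, \dots, x_{n-1}} \in X^{<\omega}$, enumerate $h(\bp{x_0, \dots, x_{n-1}}) = \bp{y^{\bar{s}}_\alpha : \alpha < \kappa}$ with possible repetitions, and define $f\colon X^{<\omega} \to X$ by
\[
f(\ap{x_0, \dots, x_{n-1}, \alpha}) = y^{\ap{x_0, \dots, x_{n-1}}}_\alpha \quad \text{when } \alpha < \kappa,
\]
$f(\ap{}) = \kappa$, and $f$ arbitrarily elsewhere. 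If $Z \in \mathcal{P}_{\kappa^+}(X)$ is closed under $f$ then $\kappa = f(\ap{}) \in Z \cap \kappa^+$, and since $Z \cap \kappa^+$ is an ordinal in $\kappa^+$, it must strictly exceed $\kappa$, giving $\kappa \subseteq Z$. Consequently, for every finite $s \subseteq Z$ and every $\alpha < \kappa \subseteq Z$ we have $y^s_\alpha \in Z$, hence $h(s) \subseteq Z$ and $Z = \bigcup_{s \in \qp{Z}^{<\omega}} h(s)$.

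To deduce $Z \in C$, I would extend $h$ continuously to $\qp{X}^{\le\kappa}$ by $h(A) = \bigcup_{s \in \qp{A}^{<\omega}} h(s)$ and prove by transfinite recursion on $\vp{A}$ that $h(A) \in C$: for countable $A$, a cofinal $\omega$-chain in $(\qp{A}^{<\omega}, \subseteq)$ induces an $\omega$-chain in $C$ whose union is $h(A)$, placing $h(A)$ in $C$ by closure under $\le\kappa$-chains; for larger $\vp{A}$, write $A$ as an increasing union of $\vp{A}$ many strictly smaller subsets and invoke the inductive hypothesis together with chain closure. Applying this to $A = Z$ yields $h(Z) = Z \in C$. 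The main obstacle is precisely this continuous extension: at each cardinal stage one has to verify that the cofinal chain realizing $h(A)$ stays inside $C$, which is delivered by iterating the $\le\kappa$-chain closure of $C$ through the recursion.
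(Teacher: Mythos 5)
The paper states this proposition only as a citation to Kueker and supplies no proof of its own, so there is nothing to compare against; judged on its own terms, your argument is correct and is essentially the standard proof. The two points a reader might pause over both check out: setting the value of $f$ at the empty tuple equal to $\kappa$ forces $\kappa\subseteq Z$ for every $f$-closed $Z\in\mathcal{P}_{\kappa^+}(X)$ (since $Z\cap\kappa^+$ is then an ordinal strictly above $\kappa$), which is exactly what is needed to recover all of $h(s)$ from the indexed values $y^{s}_\alpha$, and shrinking the set of $f$-closed elements in this way is harmless because only an inclusion into $C$ is claimed. The transfinite induction showing $h(A)\in C$ for all $A$ with $\vp{A}\le\kappa$ uses only the monotonicity of $h$ together with closure of $C$ under unions of chains of length $\le\kappa$, and the identity $Z=\bigcup\bp{h(s) : s\in\qp{Z}^{<\aleph_0}}$ for $f$-closed $Z$ then finishes the proof; the only cosmetic gap is that $h(\emptyset)$ must be chosen non-empty so that it admits a $\kappa$-indexed enumeration.
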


As a consequence, a subset $a\subseteq\mathcal{P}_{\kappa^+}(X)$ has non-empty intersection with every closed unbounded set if and only if $a$ is stationary in $\mathcal{P}_{\kappa^+}(X)$. Let us also recall a result on splitting stationary sets, which will be useful for our Proposition \ref{proposition:towdisj}.

\begin{theorem}[{Matsubara \cite[Theorem 2]{matsubara}}]\label{theorem:matsubara} Every set which is stationary in $\mathcal{P}_{\kappa^+}(X)$ can be partitioned into $\vp{X}$ disjoint stationary sets.
\end{theorem}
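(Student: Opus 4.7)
The plan is to reduce Matsubara's theorem to Solovay's classical splitting theorem on $\kappa^+$ by means of the projection $\pi\colon\mathcal{P}_{\kappa^+}(X)\to\kappa^+$ given by $\pi(Z)=Z\cap\kappa^+$, and then to refine the resulting partition to size $\vp{X}$ when $\vp{X}>\kappa^+$.

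The first step is to verify that $\pi$ induces a well-behaved projection of ideals. Let $I$ denote the restriction of $\NS_{\kappa^+,X}$ to the given stationary set $S$, and consider the pushforward $\pi_*(I)$ on $\kappa^+$, defined by $B\in\pi_*(I)$ iff $S\cap\pi^{-1}[B]$ is non-stationary in $\mathcal{P}_{\kappa^+}(X)$. The key claim is that $\pi_*(I)$ is a $\kappa^+$-complete normal ideal on $\kappa^+$ extending the non-stationary ideal restricted to $T=\pi[S]$; this should follow from the normality of $\NS_{\kappa^+,X}$ by translating diagonal unions along $\pi$, since for $\bp{B_\alpha : \alpha<\kappa^+}\subseteq\pi_*(I)$ the sets $a_\alpha=\pi^{-1}[B_\alpha]$ (for $\alpha<\kappa^+$) pull back the diagonal union on $\kappa^+$ to an indexed union of the shape handled by normality on $\mathcal{P}_{\kappa^+}(X)$.

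Once normality of $\pi_*(I)$ is in hand, the standard proof of Solovay's theorem applies and yields a partition $T=\bigsqcup_{\beta<\kappa^+}T_\beta$ into $\pi_*(I)$-positive sets. Pulling back, the sets $S_\beta=S\cap\pi^{-1}[T_\beta]$ are by definition stationary in $\mathcal{P}_{\kappa^+}(X)$ and partition $S$ into $\kappa^+$ pieces; this handles the case $\vp{X}=\kappa^+$. When $\vp{X}>\kappa^+$ I would iterate: for each regular $\lambda\le\vp{X}$ with $\mathrm{cf}(\lambda)>\kappa$, the analogous projection $Z\mapsto\sup(Z\cap\lambda)$ pushes $I$ forward to a normal ideal on $\lambda$ and permits a further Solovay-style splitting, so that combining such projections for suitable $\lambda$ yields $\vp{X}$ disjoint stationary pieces.

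The main obstacle I expect is the verification of normality for $\pi_*(I)$ in the first step, or equivalently, showing that the preimage under $\pi$ of a $\pi_*(I)$-positive subset of $\kappa^+$ remains stationary in $\mathcal{P}_{\kappa^+}(X)$ after intersecting with $S$. The delicate point is to align closure operations on both sides: given $f\colon X^{<\omega}\to X$, one must produce $Z\in S$ closed under $f$ whose projection $\pi(Z)$ lands in a prescribed positive set, which requires combining the stationarity of $S$ in $\mathcal{P}_{\kappa^+}(X)$ with the club of $\alpha<\kappa^+$ closed under the restriction of $f$ to $(\kappa^+)^{<\omega}$. A secondary subtlety is the refinement to $\vp{X}$ pieces when $\vp{X}$ is singular of small cofinality, which may require ad hoc combinatorial arguments (e.g., splitting $\vp{X}$ into cofinally many regular intervals) rather than a single uniform projection.
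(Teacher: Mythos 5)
First, note that the paper does not prove this statement at all: it is quoted verbatim from Matsubara's article and used as a black box, so there is no internal proof to compare against. Judged on its own terms, your first stage is sound: for the projection $\pi(Z)=Z\cap\kappa^+$ the pushforward ideal on $\kappa^+$ really is normal and $\kappa^+$-complete, because for $\alpha<\kappa^+$ one has $\alpha<Z\cap\kappa^+$ if and only if $\alpha\in Z$, so diagonal unions on $\kappa^+$ pull back exactly to diagonal unions on $\mathcal{P}_{\kappa^+}(X)$; and each fibre $\pi^{-1}[\{\alpha\}]$ is non-stationary. One quibble: for an \emph{arbitrary} normal ideal on $\kappa^+$ the splitting into $\kappa^+$ positive pieces is not ``the standard proof of Solovay's theorem'' (which is tailored to the non-stationary ideal, and fails for general normal ideals on inaccessibles, e.g.\ duals of normal measures); what you need is the Ulam matrix theorem, which works for any $\kappa^+$-complete ideal on a successor cardinal containing the singletons. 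With that correction, the case $\vp{X}=\kappa^+$ is complete.

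The genuine gap is the passage from $\kappa^+$ pieces to $\vp{X}$ pieces, which is where the entire content of Matsubara's theorem lies. For the projection $\rho(Z)=\sup(Z\cap\lambda)$ with $\kappa^+<\lambda\le\vp{X}$ regular, the argument that worked for $\pi$ breaks down: $\beta<\sup(Z\cap\lambda)$ does \emph{not} imply $\beta\in Z$, so a set $Z$ of size $\le\kappa$ cannot witness membership for all $\beta$ below its supremum, and the translation of diagonal unions fails. Concretely, to show the pushforward $\rho_*(\NS\mathbin{\upharpoonright}S)$ is normal you would already need it to be $\lambda$-complete, i.e.\ that a union of $\mu<\lambda$ sets $B_i$ with each $\bp{Z\in S : \sup(Z\cap\lambda)\in B_i}$ non-stationary is again of this form; the natural diagonal function only controls the indices $i\in Z$, which is $\le\kappa$ many, and I see no ZFC argument closing this. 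This is precisely the point at which Matsubara abandons elementary projection arguments: his proof of Theorem 2 is by contradiction via \emph{generic ultrapowers} --- if $S$ could not be split into $\vp{X}$ pieces, then $\NS\mathbin{\upharpoonright}S$ would be a $\vp{X}$-saturated normal fine ideal on $\mathcal{P}_{\kappa^+}(X)$, and forcing with its positive sets and analysing the resulting generic embedding yields a contradiction. Your closing paragraph correctly flags the singular-$\vp{X}$ bookkeeping as a secondary issue (and indeed a nested splitting would handle it \emph{if} the regular case were available), but the primary obstacle is not where you place it (normality of $\pi_*$, which is fine) --- it is the second-stage projection, and your sketch does not supply the missing idea.
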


If $\kappa$ is any infinite cardinal and $\kappa^+\subseteq X$, we define the \emph{non-stationary ideal}
\[
\mathrm{NS}\rest\mathcal{P}_{\kappa^+}(X)=\bp{a\subseteq\mathcal{P}(X) : a\cap\mathcal{P}_{\kappa^+}(X)\text{ is not stationary in }\mathcal{P}_{\kappa^+}(X)},
\]
which is a normal fine ideal over $\mathcal{P}(X)$ (see \cite[Section 3]{FOREMAN}). Furthermore, a well-known result of Menas \cite[Corollary 1.9]{menas} gives that for all $\kappa^+\subseteq X\subseteq Y$
\begin{equation}\label{eq:menas}
\mathrm{NS}\rest\mathcal{P}_{\kappa^+}(X)=\bp{a\subseteq\mathcal{P}(X) : \bp{Z\subseteq Y : Z\cap X\in a}\in\mathrm{NS}\rest\mathcal{P}_{\kappa^+}(Y)}.
\end{equation}
As a result, whenever $\delta>\kappa$ is inaccessible we have a tower
\[
\tow{I}^\kappa_{<\delta}=\ap{\mathrm{NS}\rest\mathcal{P}_{\kappa^+}(X) : \kappa^+\subseteq X\in V_\delta}
\]
of height $\delta$.

The corresponding Boolean algebra $\bool{B}(\tow{I}^\kappa_{<\delta})$ is called the \emph{stationary tower} of height $\delta$ and critical point $\kappa^+$. To view concretely the stationary tower as a notion of forcing, let us define
\[
Q^\kappa_{<\delta}=\bp{a\in V_\delta : \kappa^+\subseteq\U a\text{ and }a\text{ is stationary in }\mathcal{P}_{\kappa^+}(\U a)}
\]
and for $a,b\in Q^\kappa_{<\delta}$ define
\[
b\le a\iff\U a\subseteq\U b\text{ and for all }Z\in b,\ Z\cap\U a\in a.
\]
Then the map $a\mapsto\qp{a}_{\tow{I}^\kappa_{<\delta}}$ gives a dense embedding of $Q^\kappa_{<\delta}$ into ${\bool{B}(\tow{I}^\kappa_{<\delta})}^+$. In particular, since $\tow{I}^\kappa_{<\delta}$ concentrates on sets of cardinality less than or equal to $\kappa$, for every $\qp{a}_{\tow{I}^\kappa_{<\delta}}\in\bool{B}(\tow{I}^\kappa_{<\delta})$ we have  $\qp{a}_{\tow{I}^\kappa_{<\delta}}=\qp{\bp{Z\in a : \vp{Z}\le\kappa}}_{\tow{I}^\kappa_{<\delta}}$.

\begin{proposition}\label{proposition:towdisj} Let $\delta$ be inaccessible; for every infinite cardinal $\kappa<\delta$ the Boolean algebra $\bool{B}(\tow{I}^\kappa_{<\delta})$ is ${<}\delta$-disjointable.
\end{proposition}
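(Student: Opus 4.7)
The plan is to reduce the problem to Theorem \ref{theorem:bv}. Since $\delta$ is inaccessible, hence a limit cardinal, being ${<}\delta$-disjointable is equivalent to being ${<}\lambda^+$-disjointable for every $\lambda<\delta$, and by Theorem \ref{theorem:bv} this in turn follows from the estimate $\sat(\bool{B}(\tow{I}^\kappa_{<\delta})\mathbin{\upharpoonright}b)\ge\delta$ for every $b\in\bool{B}(\tow{I}^\kappa_{<\delta})^+$. So the task becomes: for every non-zero $b$ and every cardinal $\mu<\delta$, exhibit an antichain of cardinality $\mu$ below $b$ in $\bool{B}(\tow{I}^\kappa_{<\delta})$.

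By the dense embedding $a\mapsto\qp{a}_{\tow{I}^\kappa_{<\delta}}$ of $Q^\kappa_{<\delta}$ into $\bool{B}(\tow{I}^\kappa_{<\delta})^+$, I may assume that $b=\qp{a}_{\tow{I}^\kappa_{<\delta}}$ with $a$ stationary in $\mathcal{P}_{\kappa^+}(\U a)$. Using inaccessibility of $\delta$, I then choose $Y\in V_\delta$ such that $\U a\subseteq Y$ and $\vp{Y}\ge\mu$, and form the canonical lift $a'=\bp{Z\subseteq Y : Z\cap\U a\in a}$. Menas's coherence equation \eqref{eq:menas} guarantees that $a'$ is stationary in $\mathcal{P}_{\kappa^+}(Y)$, while by the very definition of $\bool{B}(\tow{I}^\kappa_{<\delta})$ one has $\qp{a'}_{\tow{I}^\kappa_{<\delta}}=b$. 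Next, I apply Matsubara's Theorem \ref{theorem:matsubara} to partition $a'\cap\mathcal{P}_{\kappa^+}(Y)$ into a family $\bp{a_\alpha : \alpha<\vp{Y}}$ of pairwise disjoint subsets, each of which is stationary in $\mathcal{P}_{\kappa^+}(Y)$.

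To conclude I select the first $\mu$ of these classes: each $\qp{a_\alpha}_{\tow{I}^\kappa_{<\delta}}$ is non-zero (by stationarity of $a_\alpha$), sits below $b$ (since $a_\alpha\subseteq a'$), and pairwise incompatibility follows from the fact that meets in the quotient $\mathcal{P}(\mathcal{P}(Y))/\NS\mathbin{\upharpoonright}\mathcal{P}_{\kappa^+}(Y)$ are computed from intersections of subsets of $\mathcal{P}(Y)$, which vanish here by disjointness. There is no genuine obstacle: once \eqref{eq:menas} and Matsubara's splitting result are in hand, the argument is essentially book-keeping. The only delicate point is that stationarity must be preserved when the representative of $b$ is lifted from $\U a$ to the larger ground set $Y$, which is exactly the content of \eqref{eq:menas}.
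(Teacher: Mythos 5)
Your proposal is correct and follows essentially the same route as the paper: reduce via Theorem \ref{theorem:bv} to showing $\delta\le\sat(\bool{B}(\tow{I}^\kappa_{<\delta})\mathbin{\upharpoonright}b)$ for all $b$, then produce the required antichains below a stationary representative by lifting it to a larger ground set via \eqref{eq:menas} and splitting with Matsubara's Theorem \ref{theorem:matsubara}. The only cosmetic difference is that the paper lifts to $V_\lambda$ rather than an arbitrary $Y$ of large cardinality, and leaves the limit-cardinal bookkeeping for Theorem \ref{theorem:bv} implicit.
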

\begin{proof} We first claim that for every $a\in Q^\kappa_{<\delta}$ and every $\lambda<\delta$, if $\U a\subseteq V_\lambda$ then there exists an antichain $\bp{b_\beta : \beta<\lambda}$ in $Q^\kappa_{<\delta}$ such that $b_\beta\le a$ for each $\beta<\lambda$. Given a condition $a$, by \eqref{eq:menas} the set
\[
\bp{Z\in\mathcal{P}_{\kappa^+}(V_\lambda) : Z\cap\U a\in a}
\]
is stationary in $\mathcal{P}_{\kappa^+}(V_\lambda)$, hence by Theorem \ref{theorem:matsubara} we can partition it into $\lambda$ disjoint stationary sets $b_\beta$ for $\beta<\lambda$. Then it is clear that $\bp{b_\beta : \beta<\lambda}$ is the desired antichain in $Q^\kappa_{<\delta}$.

By density, this implies that for all $b\in{\bool{B}(\tow{I}^\kappa_{<\delta})}^+$ we have $\delta\le\sat(\bool{B}(\tow{I}^\kappa_{<\delta})\rest b)$. Therefore, Theorem \ref{theorem:bv} gives that $\bool{B}(\tow{I}^\kappa_{<\delta})$ is ${<}\delta$-disjointable.
\end{proof}

\begin{corollary}\label{corollary:tow} For every $E\in\qp{\bool{B}(\tow{I}^\kappa_{<\delta})}^{<\delta}$ with the finite intersection property, there exists a countably incomplete ${<}\delta$-good ultrafilter $U$ on $\bool{B}(\tow{I}^\kappa_{<\delta})$ such that $E\subset U$.
\end{corollary}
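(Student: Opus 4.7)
The statement to be proved is an immediate corollary of the existence theorem for good ultrafilters (Theorem \ref{theorem:egood}) once the three hypotheses of that theorem are verified for $\bool{B}(\tow{I}^\kappa_{<\delta})$: namely ${<}\delta$-disjointability, ${<}\delta$-completeness, and cardinality exactly $\delta$.

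The plan is as follows. First, ${<}\delta$-disjointability is already established in the just-proved Proposition \ref{proposition:towdisj}. Second, both ${<}\delta$-completeness and the upper bound $\vp{\bool{B}(\tow{I}^\kappa_{<\delta})}\le\delta$ are recorded in Lemma \ref{lemma:tow}. Third, for the matching lower bound $\vp{\bool{B}(\tow{I}^\kappa_{<\delta})}\ge\delta$, I would invoke Proposition \ref{proposition:disjcard}, which yields $\delta^{<\delta}\le\vp{\bool{B}(\tow{I}^\kappa_{<\delta})}$, so in particular $\vp{\bool{B}(\tow{I}^\kappa_{<\delta})}=\delta$. With all three hypotheses in place, Theorem \ref{theorem:egood} applied to the given $E$ produces the desired countably incomplete ${<}\delta$-good ultrafilter $U$ extending $E$.

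There is no substantial obstacle: the argument is entirely bookkeeping, parallel to the proof of Corollary \ref{corollary:coll}. The only point that requires a brief remark, rather than a one-line citation, is the cardinality computation, since Lemma \ref{lemma:tow} provides only an upper bound and Theorem \ref{theorem:egood} is stated for Boolean algebras of cardinality exactly $\delta$; but as observed, Proposition \ref{proposition:disjcard} closes this gap automatically from the disjointability and completeness already at hand.
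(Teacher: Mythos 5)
Your proposal is correct and matches the paper's own (one-line) proof, which cites exactly Lemma \ref{lemma:tow}, Proposition \ref{proposition:towdisj}, and Theorem \ref{theorem:egood}. Your extra remark that Proposition \ref{proposition:disjcard} upgrades the bound $\vp{\bool{B}(\tow{I}^\kappa_{<\delta})}\le\delta$ from Lemma \ref{lemma:tow} to equality is a welcome bit of added precision, since Theorem \ref{theorem:egood} is stated for algebras of cardinality exactly $\delta$, but it does not change the argument.
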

\begin{proof} By Lemma \ref{lemma:tow}, Theorem \ref{theorem:egood}, and Proposition \ref{proposition:towdisj}.
\end{proof}

Next, we turn to the relevant Boolean-valued structure. As opposed to the case of Section \ref{section:quattro}, here $H_{\check{\delta}}^{\bool{B}(\tow{I}^\kappa_{<\delta})}$ would not naturally be a $\bool{B}(\tow{I}^\kappa_{<\delta})$-valued structure, simply because the Boolean algebra fails to be complete. One approach is pursued in the work of Marinov \cite{MARINOV}: quotient $\bool{B}(\tow{I}^\kappa_{<\delta})$ by a carefully chosen ideal $I$ to obtain a complete quotient algebra and then work with $H_{\check{\delta}}^{\bool{B}(\tow{I}^\kappa_{<\delta})/I}$. Here we choose a different route and argue that $\Ult(H_{\kappa^+},\tow{I}^\kappa_{<\delta})$ is a suitable $\bool{B}(\tow{I}^\kappa_{<\delta})$-valued structure, which then we quotient by a ${<}\delta$-good ultrafilter to obtain our main universality result.

\begin{theorem}\label{theorem:maindue} Let $\delta$ be an inaccessible cardinal and $\kappa<\delta$ be an infinite cardinal. Every $E\in\qp{\bool{B}(\tow{I}^\kappa_{<\delta})}^{<\delta}$ with the finite intersection property can be extended to an ultrafilter $U$ on $\bool{B}(\tow{I}^\kappa_{<\delta})$ such that for every $\mathcal{A}\subseteq\bigcup_{n<\omega}\mathcal{P}(^n(H_{\kappa^+}))$:
\begin{itemize}
\item the quotient $\ap{\Ult(H_{\kappa^+},\tow{I}^\kappa_{<\delta})/U,\in_\mathcal{A}}$ is a model of $\Th(H_{\kappa^+},\in_\mathcal{A})$;
\item for every model $\mathfrak{M}\models\Th_\forall(H_{\kappa^+},\in_\mathcal{A})$ such that $\vp{M}\le\delta$, there exists an embedding $e\colon\mathfrak{M}\to\ap{\Ult(H_{\kappa^+},\tow{I}^\kappa_{<\delta})/U,\in_\mathcal{A}}$ of $\in_\mathcal{A}$-structures;
\item for every model $\mathfrak{M}\models\Th(H_{\kappa^+},\in_\mathcal{A})$ such that $\vp{M}\le\delta$, there exists an elementary embedding $j\colon\mathfrak{M}\to\ap{\Ult(H_{\kappa^+},\tow{I}^\kappa_{<\delta})/U,\in_\mathcal{A}}$ of $\in_\mathcal{A}$-structures.
\end{itemize}
\end{theorem}
\begin{proof} Applying Corollary \ref{corollary:tow}, let $U$ be a countably incomplete ${<}\delta$-good ultrafilter on $\bool{B}(\tow{I}^\kappa_{<\delta})$ such that $E\subset U$. Proposition \ref{proposition:full} ensures that $\Ult(H_{\kappa^+},\tow{I}^\kappa_{<\delta})$ is full and satisfies the ${<}\delta$-mixing property.

The first point follows directly from Corollary \ref{corollary:abs}. For the second point, let $\mathfrak{M}$ be a model of $\Th_\forall(H_{\kappa^+},\in_\mathcal{A})$ such that $\vp{M}\le\delta$. Corollary \ref{corollary:abs} again implies that $\mathfrak{M}\models\Th_\forall(\Ult(H_{\kappa^+},\tow{I}^\kappa_{<\delta})/U,\in_\mathcal{A})$. Then Theorem \ref{theorem:keisler} and Theorem \ref{theorem:mansfield} give the desired embedding. The third point is completely analogous to the second, except that we apply Theorem \ref{theorem:mv} instead of Theorem \ref{theorem:keisler} to obtain an elementary embedding.
\end{proof}

For the universality result of the previous section, we considered Boolean-valued structures $H_{\check{\delta}}^{\Coll(\kappa,{<}\delta)}$ which, by Proposition \ref{proposition:rep}, provide a canonical representation for $H_\delta$ when forcing with $\Coll(\kappa,{<}\delta)$. In the remainder of this section, we aim to show that a similar fact holds for the structures we consider in Theorem \ref{theorem:maindue}. More precisely, we shall show that under large cardinal assumptions, quotienting $\Ult(H_{\kappa^+},\tow{I}^\kappa_{<\delta})$ by a $\bool{B}(\tow{I}^\kappa_{<\delta})$-generic filter $G$ gives rise to a well-founded structure whose transitive collapse is $H_\delta$ as computed in $V[G]$.

\begin{definition} A cardinal $\delta$ is \emph{Woodin} if for all $f\colon\delta\to\delta$ there exists $\alpha<\delta$ with $f[\alpha]\subseteq\alpha$ and a nontrivial elementary embedding $j\colon V\to M$ such that $\crit(j)=\alpha$ and $V_{j(f)(\alpha)}\subset M$.
\end{definition}

If $\delta$ is a Woodin cardinal, then $\delta$ is Mahlo (see Kanamori \cite[Exercise 26.10]{kanamori:higher_infinite}) and therefore inaccessible.

\begin{theorem}[Woodin]\label{theorem:woodin} Suppose $\delta$ is a Woodin cardinal. Then, for every infinite $\kappa<\delta$ the Boolean algebra $\bool{B}(\tow{I}^\kappa_{<\delta})$ is ${<}\delta$-presaturated. Furthermore, if $G$ is $\bool{B}(\tow{I}^\kappa_{<\delta})$-generic over $V$ then $\Ult(H_{\kappa^+},\tow{I}^\kappa_{<\delta})/G$ is well founded.
\end{theorem}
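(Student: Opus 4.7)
My plan is to deduce both assertions from the existence of generic elementary embeddings arising from the Woodin hypothesis. The critical input is the defining clause of Woodinness: for any $f\colon\delta\to\delta$ one obtains $\alpha<\delta$ with $f[\alpha]\subseteq\alpha$ together with an elementary $j\colon V\to M$ satisfying $\crit(j)=\alpha$ and $V_{j(f)(\alpha)}\subset M$. In both parts, the idea is to encode the relevant combinatorial data into such an $f$, apply Woodinness, and then convert $j$ into a master condition of the stationary tower.

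For presaturation, fix $b\in\bool{B}(\tow{I}^\kappa_{<\delta})^+$ and antichains $\bp{A_\eta : \eta<\lambda}$ with $\lambda<\delta$. By Lemma \ref{lemma:bur}, each $A_\eta$ may be represented by a pairwise disjoint sequence of stationary sets, so that incompatibility in the tower coincides with set-theoretic disjointness of the chosen representatives. Package the ranks of $b$, $\kappa$, $\lambda$, and of all these representatives into a single function $f\colon\delta\to\delta$, and apply Woodinness to obtain $\alpha>\lambda,\kappa$ and $j\colon V\to M$ as above. Setting $\beta=j(f)(\alpha)$, the closure $V_\beta\subset M$ makes $j$ rich enough to see every $A_\eta$ from within $M$. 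Define $d\subseteq\mathcal{P}_{\kappa^+}(V_\beta)$ to consist of those $Z$ which are closed under enough Skolem functions for $b$ and $\ap{A_\eta : \eta<\lambda}$ and which satisfy $Z\cap\bigcup b\in b$. A diagonal argument using normality of $\NS\mathbin{\upharpoonright}\mathcal{P}_{\kappa^+}(V_\beta)$ then shows that $d$ is stationary, refines $b$, and that for each $\eta<\lambda$ at most $\vp{V_\beta}<\delta$ elements of $A_\eta$ remain compatible with $d$.

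For well-foundedness, suppose for contradiction that some condition $a$ forces $\bp{\qp{\dot{f}_n}_G : n<\omega}$ to be an infinite $\in$-descending sequence in the generic ultrapower. By the ${<}\delta$-mixing and fullness of $\Ult(H_{\kappa^+},\tow{I}^\kappa_{<\delta})$ established in Proposition \ref{proposition:full}, we may replace each $\dot{f}_n$ by an actual $f_n\colon\mathcal{P}(X_n)\to H_{\kappa^+}$; setting $X=\bigcup_n X_n$, the same proposition turns the assumption on $a$ into stationarity of the set of $Z\subseteq X$ on which $\ap{f_n(Z\cap X_n) : n<\omega}$ is $\in$-descending. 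Encoding the $f_n$ into a function $f\colon\delta\to\delta$ and invoking Woodinness yields $j\colon V\to M$ with $V_{j(f)(\alpha)}\subset M$; a master-condition argument then shows that in $M$ the sequence $\ap{j(f_n)(j[X_n]) : n<\omega}$ is a descending $\in$-chain in $j(H_{\kappa^+})$, contradicting well-foundedness of $M$. The hardest step throughout is verifying that the candidate master conditions fashioned from $j$ actually lie in the stationary tower and refine the originally given data: this rests on a careful interplay between the closure $V_{j(f)(\alpha)}\subset M$ and the normality and fineness of $\NS\mathbin{\upharpoonright}\mathcal{P}_{\kappa^+}(V_\beta)$, and it constitutes the heart of Woodin's original argument.
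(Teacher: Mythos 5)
The first thing to note is that the paper does not prove this theorem: it is attributed to Woodin and the ``proof'' is a citation of Foreman's Handbook chapter (Proposition 9.2 for presaturation, Corollary 9.24 for well-foundedness). You are therefore attempting substantially more than the authors, and your outline does follow the broad shape of Woodin's argument (encode the data into $f\colon\delta\to\delta$, extract $j\colon V\to M$, build a capturing condition). As a proof, however, it has two genuine gaps. In the presaturation part, the stationarity of your set $d$ is precisely the hard point, and you attribute it to the wrong mechanism: normality of $\NS\mathbin{\upharpoonright}\mathcal{P}_{\kappa^+}(V_\beta)$ plus a diagonal argument cannot show that the set of $Z$ capturing all the $A_\eta$ is stationary below $b$ --- if it could, every inaccessible $\delta$ would make the tower presaturated, which is false. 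What is needed is the master-condition computation: one checks that $j[V_{\beta'}]$ (for suitable $\beta'$, with $V_{j(f)(\alpha)}\subset M$ guaranteeing $j[V_{\beta'}]\in M$) belongs to $j(d)$ as computed in $M$, and pulls stationarity back by elementarity; you gesture at this only in your final sentence while the body of the argument substitutes normality for it. Relatedly, the bound $\vp{\bp{a\in A_\eta : a\wedge d>\bool{0}}}<\delta$ requires the precise capturing clause (every $Z\in d$ contains some $a\in A_\eta\cap V_\beta$ with $Z\cap\U a\in a$), not merely closure under Skolem functions.

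In the well-foundedness part, the step ``replace each $\dot{f}_n$ by an actual $f_n$ using mixing and fullness'' is not available from Proposition \ref{proposition:full} alone. A name for an element of the generic ultrapower is decided to equal $\qp{\check{f}}_G$ along a maximal antichain which may have size $\ge\delta$, and the ${<}\delta$-mixing property cannot amalgamate such an antichain into a single function with domain in $V_\delta$; one must first invoke the presaturation just proved to pass to a condition $d\le a$ below which each of these antichains has effective size ${<}\delta$. Once that is done, your second application of Woodinness is superfluous: setting $b_n=\bp{Z\subseteq X : f_{n+1}(Z\cap X_{n+1})\in f_n(Z\cap X_n)}$, one has $d\le\qp{b_n}_{\tow{I}^\kappa_{<\delta}}$ for all $n$, and the countable completeness of $\NS\mathbin{\upharpoonright}\mathcal{P}_{\kappa^+}(X)$ yields a single $Z$ with $f_0(Z\cap X_0)\ni f_1(Z\cap X_1)\ni\cdots$, an infinite descending $\in$-chain in $V$ --- a contradiction requiring no further embedding. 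This is the route Foreman's Corollary 9.24 actually takes, deriving well-foundedness from presaturation rather than from a fresh reflection argument.
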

\begin{proof} Combine \cite[Proposition 9.2]{FOREMAN} and \cite[Corollary 9.24]{FOREMAN}.
\end{proof}

Accordingly, if $\delta$ is Woodin and $G$ is $\bool{B}(\tow{I}^\kappa_{<\delta})$-generic over $V$, then we identify $\Ult(H_{\kappa^+},\tow{I}^\kappa_{<\delta})/G$ with its transitive collapse and let
\[
\begin{split}
j_G\colon H_{\kappa^+}&\longrightarrow\Ult(H_{\kappa^+},\tow{I}^\kappa_{<\delta})/G \\
a &\longmapsto\qp{c_a}_G
\end{split}
\]
be the generic elementary embedding given by Corollary \ref{corollary:abs}. Since $\tow{I}^\kappa_{<\delta}$ concentrates on those $Z$ such that $Z\cap\kappa^+\in\kappa^+$, using \cite[Proposition 4.56]{FOREMAN} we see that $j_G(\alpha)=\alpha$ for all $\alpha<\kappa^+$.

\begin{proposition} Let $\delta$ be a Woodin cardinal and $\kappa<\delta$ be an infinite cardinal. If $G$ is $\bool{B}(\tow{I}^\kappa_{<\delta})$-generic over $V$, then
\[
H_\delta^{V[G]}=\Ult(H_{\kappa^+},\tow{I}^\kappa_{<\delta})/G.
\]
\end{proposition}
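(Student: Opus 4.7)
The plan is to invoke the generic elementary embedding framework for the stationary tower and identify $\Ult(H_{\kappa^+},\tow{I}^\kappa_{<\delta})/G$ with the restriction to $H_{\kappa^+}$ of the generic embedding.

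The generic filter $G$ induces an elementary embedding $j\colon V\to M$, $x\mapsto\qp{c_x}_G$, where $M$ is the transitive collapse of $\Ult(V,\tow{I}^\kappa_{<\delta})/G$, which is well founded by the same argument as in Theorem \ref{theorem:woodin}. Standard results on the stationary tower (see \cite[\S 9]{FOREMAN}) give $\crit(j)=\kappa^+$ and $j(\kappa^+)=\delta$; the Woodinness of $\delta$ moreover yields the closure property that every sequence of length less than $\delta$ lying in $V[G]$ whose values lie in $M$ already belongs to $M$.

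The first step is to identify the transitive collapse of $\Ult(H_{\kappa^+},\tow{I}^\kappa_{<\delta})/G$ with $j(H_{\kappa^+})=H_{j(\kappa^+)}^M=H_\delta^M$. Writing $s=\qp{\mathrm{id}}_G$ for the generic seed, for each $f\colon\mathcal{P}(X)\to H_{\kappa^+}$ in $V$ we have $\qp{f}_G=j(f)(s)\in j(H_{\kappa^+})$, and conversely every element of $j(H_{\kappa^+})$ arises this way.

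The second step is the inclusion $H_\delta^{V[G]}\subseteq H_\delta^M$. Arguing by $\in$-induction on $x\in H_\delta^{V[G]}$, every element of $\trcl(x)$ is inductively in $M$; an enumeration of $\trcl(x)$ of length less than $\delta$ living in $V[G]$, available since $x\in H_\delta^{V[G]}$, places both the enumeration and $x$ inside $M$ by the closure property. A further application of closure to a bijection from $\trcl(x)$ onto some $\lambda<\delta$ gives $\vp{\trcl(x)}^M<\delta$, whence $x\in H_\delta^M$.

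The main obstacle is the closure property $M^{<\delta}\cap V[G]\subseteq M$, which is the deepest consequence of the Woodinness of $\delta$ and the substantive non-elementary input to the argument; it is also what underpins the presaturation and well-foundedness statements of Theorem \ref{theorem:woodin}.
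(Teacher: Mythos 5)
Your argument is correct in outline but follows a genuinely different route from the paper's. You work through the full generic ultrapower $j\colon V\to M$ and its closure property $M^{<\delta}\cap V[G]\subseteq M$, identifying the transitive collapse of $\Ult(H_{\kappa^+},\tow{I}^\kappa_{<\delta})/G$ with $j(H_{\kappa^+})=H_{\delta}^{M}$ and then pushing $H_\delta^{V[G]}$ into $H_\delta^M$ by $\in$-induction plus closure. The black boxes you flag are standard and not really extra hypotheses: the ${<}\delta$-closure of $M$ under sequences in $V[G]$ is essentially Foreman's characterization of presaturation (the same \cite[Proposition 9.2]{FOREMAN} already invoked for Theorem \ref{theorem:woodin}), and $j(\kappa^+)=\delta$ likewise follows from presaturation. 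The paper instead avoids the generic embedding altogether: it uses ${<}\delta$-presaturation only through Lemma \ref{lemma:av}, which represents every element of $H_\delta^{V[G]}$ as $\tau_G$ for a name $\tau\in V^{\bool{B}(\tow{I}^\kappa_{<\delta})}\cap H_\delta$, and then defines in $V$, by recursion on names, an explicit translation $\tau\mapsto f(\tau)$ into tower functions satisfying $\qp{f(\tau)}_G=\tau_G$. The paper's route is more self-contained, needing nothing about $j$ beyond what Theorem \ref{theorem:woodin} literally states; yours is shorter once the embedding machinery is granted and yields the sharper identification of the quotient as exactly $H_\delta^M$ rather than merely a superset of $H_\delta^{V[G]}$. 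One imprecision to fix: a tower has no single seed $\qp{\mathrm{id}}_G$; the correct representation is $\qp{f}_G=j(f)(j[X])$ for $f\colon\mathcal{P}(X)\to H_{\kappa^+}$, with one seed $j[X]$ per level $X\in V_\delta$, and the converse direction (every element of $j(H_{\kappa^+})$ has such a representative) requires the \L o\'s argument that any $\qp{h}_G\in j(H_{\kappa^+})$ can be re-represented by a function whose range lies in $H_{\kappa^+}$.
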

\begin{proof} Theorem \ref{theorem:woodin} guarantees that $\bool{B}(\tow{I}^\kappa_{<\delta})$ is ${<}\delta$-presaturated, so we may apply Lemma \ref{lemma:av} to obtain that $\delta$ is a regular cardinal in $V[G]$ and $H_\delta^{V[G]}=\bp{\tau_G : \tau\in V^{\bool{B}(\tow{I}^\kappa_{<\delta})}\cap H_\delta}$.

To prove that $H_\delta^{V[G]}\subseteq\Ult(H_{\kappa^+},\tow{I}^\kappa_{<\delta})/G$, it is enough to define a function $f\colon V^{\bool{B}(\tow{I}^\kappa_{<\delta})}\cap H_\delta\to\Ult(H_{\kappa^+},\tow{I}^\kappa_{<\delta})$ in $V$ with the property that $\qp{f(\tau)}_G=\tau_G$. Given $\tau\in V^{\bool{B}(\tow{I}^\kappa_{<\delta})}\cap H_\delta$, enumerate it as
\[
\tau=\bp{\ap{\sigma_\alpha,\qp{a_\alpha}_{\tow{I}^\kappa_{<\delta}}} : \alpha<\lambda}
\]
for some $\lambda<\delta$ and assume inductively that each $f(\sigma_\alpha)\colon\mathcal{P}(X_\alpha)\to H_{\kappa^+}$ for $\alpha<\lambda$ has already been defined in such a way that $\qp{f(\sigma_\alpha)}_G={(\sigma_\alpha)}_G$. Let $X\in V_\delta$ be sufficiently large that $\lambda\cup X_\alpha\cup\U a_\alpha\subseteq X$ for each $\alpha<\lambda$ and define $f(\tau)\colon\mathcal{P}(X)\to H_{\kappa^+}$ as follows: given $Z\subseteq X$, if $\vp{Z}\le\kappa$ then let
\[
f(\tau)(Z)=\bp{f(\sigma_\alpha)(Z\cap X_\alpha) : \alpha\in Z\cap\lambda\text{ and }Z\cap\U a_\alpha\in a_\alpha}.
\]
Otherwise, let $f(\tau)(Z)$ be defined arbitrarily.

Keeping in mind Lemma \ref{lemma:tow}, it is easy to verify that for any $g\in\Ult(H_{\kappa^+},\tow{I}^\kappa_{<\delta})$
\[
\Qp{g\in f(\tau)}^{\Ult(H_{\kappa^+},\tow{I}^\kappa_{<\delta})}=\bigvee\bp{\Qp{g=f(\sigma_\alpha)}^{\Ult(H_{\kappa^+},\tow{I}^\kappa_{<\delta})}\wedge\qp{a_\alpha}_{\tow{I}^\kappa_{<\delta}} : \alpha<\lambda}.
\]
Using the inductive hypothesis and the genericity of $G$, we conclude that
\begin{align*}
\tau_G&=\bp{{(\sigma_\alpha)}_G : \alpha<\lambda\text{ and }\qp{a_\alpha}_{\tow{I}^\kappa_{<\delta}}\in G}\\ &=\bp{\qp{f(\sigma_\alpha)}_G : \alpha<\lambda\text{ and }\qp{a_\alpha}_{\tow{I}^\kappa_{<\delta}}\in G}\\ &=\bp{\qp{g}_G : \Qp{g\in f(\tau)}^{\Ult(H_{\kappa^+},\tow{I}^\kappa_{<\delta})}\in G}\\&=\qp{f(\tau)}_G,
\end{align*}
as we wanted to show.

For the reverse inclusion, first note that
\[
\ap{H_{\kappa^+},\in_{\Delta_0}}\models\forall x(\vp{x}\le\kappa)
\]
in $V$. By the elementarity of $j_G$, it follows that
\[
\ap{\Ult(H_{\kappa^+},\tow{I}^\kappa_{<\delta})/G,\in_{\Delta_0}}\models\forall x(\vp{x}\le j_G(\kappa))
\]
in $V[G]$. Since $j_G(\kappa)=\kappa<\delta$, we have that $\Ult(H_{\kappa^+},\tow{I}^\kappa_{<\delta})/G$ is transitive and all its elements have cardinality less than $\delta$, whence $\Ult(H_{\kappa^+},\tow{I}^\kappa_{<\delta})/G\subseteq H_\delta^{V[G]}$.
\end{proof}

\end{document}